\newtheorem{definition}{Definition}
\newtheorem{lemma}{Lemma}
\newtheorem{assumption}{Assumption}
\newtheorem{theorem}{Theorem}
\title{\LARGE \bf Fast, Accurate Second Order Methods for Network Optimization\thanks{This research was supported in parts by the AFOSR Complex Networks Program and ONR Basic Research Challenge Program in Decentralized and Online }
}
\author{Rasul Tutunov, Haitham Bou Ammar, and Ali Jadbabaie
\thanks{R. Tutunov is with the Computer and Information Science Department, University of Pennsylvania
        {\tt\small tutunov@seas.upenn.edu}} 
\thanks{H. Bou Ammar is with the Computer and Information Science Department, University of Pennsylvania
        {\tt\small haithamb@seas.upenn.edu}}        
\thanks{A. Jadbabaie is with the Department of Electrical Engineering, University of Pennsylvania 
        {\tt\small jadbabai@seas.upenn.edu}}%
}
\begin{document}

\maketitle
\thispagestyle{empty}
\pagestyle{empty}

\begin{abstract}
Dual descent methods are commonly used to solve network flow optimization problems, since their implementation can be distributed over the network. These algorithms, however, often exhibit slow convergence rates. Approximate Newton methods which compute descent directions locally have been proposed as alternatives to accelerate the convergence rates of conventional dual descent. The effectiveness of these methods, is limited by the accuracy of such approximations. In this paper, we propose an efficient and accurate distributed second order method for network flow problems. The proposed approach utilizes the sparsity pattern of the dual Hessian to approximate the  the Newton direction using a novel distributed solver for symmetric diagonally dominant linear equations.  Our solver is based on a distributed implementation of a recent parallel solver of Spielman and Peng (2014).
 We analyze the properties of the proposed algorithm and show that, similar to conventional Newton methods, superlinear convergence within a neighborhood of the optimal value is attained.  We finally demonstrate the effectiveness of the approach in a set of experiments on randomly generated networks.
\end{abstract}

\section{INTRODUCTION}
Conventional methods for distributed network optimization are based on sub-gradient descent in either the primal or dual domains, see~\cite{c8, c9, c10, c13}. For a large class of problems, these techniques yield iterations that can be implemented in a distributed fashion by only using local information. Their applicability, however, is limited by increasingly \emph{slow} convergence rates. Second order Newton methods~\cite{c3,c4} are known to overcome this limitation leading to improved convergence rates. 

Unfortunately, computing \emph{exact} Newton directions based only on local information is challenging. Specifically, to determine the Newton direction, the inverse of the dual Hessian is needed. Determining this inverse, however, requires global information. Consequently, authors in~\cite{c5,c6} proposed approximate algorithms for determining these Newton iterates in a distributed fashion. Accelerated Dual Descent (ADD)~\cite{c6}, for instance, exploits the fact that the dual Hessian is the weighted Laplacian of the network and performs a truncated Neumann expansion of the inverse to determine a local approximate to the exact direction. ADD allows for a tradeoff between accurate Hessian approximations and communication costs through the N-Hop design, where increased N allows for more accurate inverse approximations arriving at increased cost, and  lower values of N reduce accuracy but improve computational times. Though successful, the effectiveness of these approaches highly depend on the accuracy of the truncated Hessian inverse which is used to approximate the Newton direction. As shown in Section~\ref{Sec:Exp}, the approximated iterate can resemble high variation to the real Newton direction, decreasing the applicability of these techniques. 

Exploiting the sparsity pattern of the dual Hessian, in this paper we tackle the above problem and propose a Newton method for network optimization that is both faster and more accurate. Using recently-developed solvers for symmetric diagonally dominant (SDDM) linear equations, we approximate the Newton direction up-to any arbitrary precision $\epsilon > 0$. The solver is a distributed implementation of ~\cite{c11} constructing what is known as an inverse chain. We analyze the properties of the proposed algorithm and show that, similar to conventional Newton methods, superlinear convergence within a neighborhood of the optimal value is attained. We finally demonstrate the effectiveness of the approach in a set of experiments on randomly generated networks. Namely, we show that our method is capable of significantly outperforming state-of-the-art methods in both the convergence speeds and in the accuracy of approximating the Newton direction. 

The remainder of the paper is organized as follows. Section~\ref{Sec:Back} draws upon background material needed for the remainder of the paper. Section~\ref{Sec:ProbDefS} defines the network flow optimization problem targeted in this paper. Section~\ref{Sec:SDDM} details our proposed distributed solver for SDDM linear systems. Section~\ref{Sec:NewtonApprox} introduces the approximate Newton method and rigorously analyzes its theoretical guarantees. Section~\ref{Sec:Exp} presents the experimental results. Finally, Section~\ref{Sec:Con} concludes pointing-out interesting directions for future research.

\section{BACKGROUND}\label{Sec:Back}
\subsection{SDDM Linear Systems}\label{Sec:ProbDef}
To determine the Newton direction, we need to solve a symmetric diagonally dominant system of linear equations, defined as: 
\begin{equation}\label{lin_sys}
\bm{M}_{0}\bm{x} = \bm{b}_{0}
\end{equation}
where $\bm{M}_{0}$ is a Symmetric Diagonally Dominant M-Matrix (SDDM). Namely, $\bm{M}_{0}$ is symmetric positive definite with non-positive off diagonal elements, such that for all $i=1,2,\ldots, n$:
\begin{equation*}
\left[\bm{M}_{0}\right]_{ii} \ge -\sum_{j=1, j\ne i}^{n}\left[\bm{M}_{0}\right]_{ij}
\end{equation*}
The system of Equations in~\ref{lin_sys} can be interpreted as representing an undirected weighted graph, $\mathcal{G}$, with $\bm{M}_{0}$ being its Laplacian. Namely, $\mathcal{G} = \left(\mathcal{N},\mathcal{E},\bm{W}\right)$, with $\mathcal{N}$ representing the set of nodes, $\mathcal{E}$ denoting the edges, and $\bm{W}$ representing the weighted graph adjacency. Nodes $\bm{v}_i$ and $\bm{v}_j$ are connected with an edge $\bm{e}=\left(i,j\right)$ iff $\bm{W}_{ij}> 0$, where: 
\begin{equation*}
\bm{W}_{ij} = \left[\bm{M}_{0}\right]_{ii} \ \ \ \text{(if $i = j$)}, \ \ \ \ \text{or} \ \ \ \  \bm{W}_{ij} = -\left[\bm{M}_{0}\right]_{ij}, \text{otherwise}. 
\end{equation*}
Following~\cite{c11}, we seek $\epsilon$-approximate solutions to $\bm{x}^{\star}$, being the exact solution of $\bm{M}_{0}\bm{x}=\bm{b}_{0}$, defined as:
\begin{definition}
Let $\bm{x}^{\star}\in \mathbb{R}^{n}$ be the solution of $\bm{M}\bm{x}=\bm{b}_{0}$. A vector $\tilde{\bm{x}}\in \mathbb{R}^{n}$ is called an $\epsilon-$ approximate solution, if:
\begin{equation}
\left|\left|\bm{x}^{\star} - \tilde{\bm{x}}\right|\right|_{\bm{M}_{0}} \le \epsilon\left|\left|\bm{x}^{\star}\right|\right|_{\bm{M}_{0}}, \ \ \ \text{where $\left|\left|\bm{u}\right|\right|^{2}_{\bm{M}_0} = \bm{u}^{\mathsf{T}}\bm{M}_{0}\bm{u}$.
}
\end{equation}
\end{definition}

The R-hop neighbourhood of node $\bm{v}_{k}$ is defined as $\mathbb{N}_{r}\left(\bm{v}_{k}\right) = \{\bm{v}\in \mathcal{N}: \text{dist}\left(\bm{v}_{k}, \bm{v}\right)\le r\}$. We also make use of the diameter of a graph, $\mathcal{G}$, defined as $\text{diam}\left(\mathcal{G}\right) = \max_{\bm{v}_{i},\bm{v}_{j}\in \mathcal{N}}\text{dist}\left(\bm{v}_i,\bm{v}_j\right)$. 

\begin{definition}
A matrix $\bm{A} \in \mathbb{R}^{n\times n}$ is said to have a sparsity pattern corresponding to the R-hop neighborhood if $\bm{A}_{ij} = 0$ for all $i = 1,\ldots, n$ and for all $j$ such that $\bm{v}_j\notin \mathbb{N}_{r}\left(\bm{v}_i\right)$. 
\end{definition}

We will denote the spectral radius of a matrix $\bm{A}$ by $\rho\left(\bm{A}\right) = \max{\left|\bm{\lambda}_i\right|}$, where $\bm{\lambda}_i$ represents an eigenvalue of the matrix $\bm{A}$. Furthermore, we will make use of the condition number\footnote{Please note that in the case of the graph Laplacian,  the condition number is defined as the ratio of the largest to the smallest nonzero eigenvalues.}, $\kappa\left(\bm{A}\right)$ of a matrix $\bm{A}$ defined as $\kappa=\left|\frac{\bm{\lambda}_{\text{max}}\left(\bm{A}\right)}{\bm{\lambda}_{\text{min}}\left(\bm{A}\right)}\right|$.  In~\cite{DaanS} it is shown that the condition number of the graph Laplacian is at most $\mathcal{O}\left(n^{3}\frac{\bm{W}_{\text{max}}}{\bm{W}_{\text{min}}}\right)$, where $\bm{W}_{\text{max}}$ and $\bm{W}_{\text{min}}$ represent the largest and the smallest edge weights in $\mathcal{G}$. Finally, the condition number of a sub-matrix of the Laplacian is at most $\mathcal{O}\left(n^4\frac{\bm{W}_{\text{max}}}{\bm{W}_{\text{min}}}\right)$, see~\cite{c11}.  

\subsection{Standard Splittings \& Approximations}\label{Sec:Standard}
For determining the Newton direction, we propose a fast distributed solver for symmetric diagonally dominant linear equations. Our approach is based on a distributed implementation of the parallel solver of Spielman and Peng~\cite{c11}. Before detailing the parallel solver, however, we next provide basic notions and notations required.

\begin{definition}
The standard splitting of a symmetric matrix $\bm{M}_{0}$ is: 
\begin{equation}
\bm{M}_{0} = \bm{D}_{0} - \bm{A}_{0}.
\end{equation}
Here, $\bm{D}_0$ is a diagonal matrix such that $\left[\bm{D}_{0}\right]_{ii} = \left[\bm{M}_{0}\right]_{ii}$ for $i = 1,2,\ldots,n$, and $\bm{A}_0$ representing a non-negative symmetric matrix such that $\left[\bm{A}_0\right]_{ij} = -\left[\bm{M}_{0}\right]_{ij}$ if $i\ne j$, and $\left[\bm{A}_{0}\right]_{ii} = 0$.
\end{definition}
We also define the Loewner ordering: 
\begin{definition}
Let $\mathcal{\bm{S}}_{(n)}$ be the space of $n \times n$-symmetric matrices. The Loewner ordering $\preceq$ is a partial order on $\mathcal{\bm{S}}_{(n)}$ such that $\bm{Y}\preceq \bm{X}$ if and only if $\bm{X} - \bm{Y}$ is positive semidefinite.
\end{definition}

Finally, we define the ``$\approx_{\alpha}$'' operation used in the sequel to come as:
\begin{definition}
Let $\bm{X}$ and $\bm{Y}$ be positive semidefinite symmetric matrices. Then $\bm{X}\approx_{\alpha} \bm{Y}$ if and only iff
\begin{equation}
e^{-\alpha}\bm{X} \preceq \bm{Y} \preceq e^{\alpha}\bm{X}
\end{equation}
with $\bm{A}\preceq \bm{B}$ meaning $\bm{B} - \bm{A}$ is positive semidefinite.
\end{definition}

Based on the above definitions, the following lemma represents the basic characteristics of the $\approx_{\alpha}$ operator:
\begin{lemma}~\cite{c11}\label{approx_lemma_facts}
Let $\bm{X},\bm{Y},\bm{Z}$ and, $\bm{Q}$ be symmetric positive semi definite matrices. Then
\begin{enumerate}
\item[] (1) If $\bm{X}\approx_{\alpha} \bm{Y}$, then $\bm{X} + \bm{Z} \approx_{\alpha} \bm{Y} + \bm{Z}$, (2) If $\bm{X}\approx_{\alpha} \bm{Y}$ and $\bm{Z}\approx_{\alpha} \bm{Q}$, then $\bm{X} + \bm{Z} \approx_{\alpha} \bm{Y} + \bm{Q}$
\item[] (3) If $\bm{X}\approx_{\alpha} \bm{Y}$ and $\bm{Z}\approx_{\alpha} \bm{Q}$, then $\bm{X} + \bm{Z} \approx_{\alpha} \bm{Y} + \bm{Q}$, (4) If $\bm{X}\approx_{\alpha_1} \bm{Y}$ and $\bm{Y} \approx_{\alpha_2} \bm{Z}$, then $\bm{X} \approx_{\alpha_1 + \alpha_2} \bm{Z}$
\item[] (5) If $\bm{X}$, and $\bm{Y}$ are non singular and $\bm{X}\approx_{\alpha} \bm{Y}$, then $\bm{X}^{-1}\approx_{\alpha} \bm{Y}^{-1}$, (6) If $\bm{X}\approx_{\alpha} \bm{Y}$ and $\bm{V}$ is a matrix, then $\bm{V}^{\mathsf{T}}\bm{X}\bm{V}\approx_{\alpha}\bm{V}^{\mathsf{T}}\bm{Y}\bm{V}$
\end{enumerate}
\end{lemma}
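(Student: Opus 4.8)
The plan is to reduce every claim to a few elementary manipulations of the Loewner order, after unfolding the definition $\bm{X} \approx_\alpha \bm{Y} \Longleftrightarrow e^{-\alpha}\bm{X} \preceq \bm{Y} \preceq e^{\alpha}\bm{X}$ into its two one-sided inequalities. First I would record a small toolkit of facts about $\preceq$ on symmetric PSD matrices, each immediate from $\bm{A} \preceq \bm{B} \Leftrightarrow \bm{B} - \bm{A} \succeq 0$: (i) additivity, if $\bm{A}_1 \preceq \bm{B}_1$ and $\bm{A}_2 \preceq \bm{B}_2$ then $\bm{A}_1 + \bm{A}_2 \preceq \bm{B}_1 + \bm{B}_2$; (ii) positive homogeneity, $\bm{A} \preceq \bm{B}$ and $c \ge 0$ give $c\bm{A} \preceq c\bm{B}$; (iii) congruence invariance, $\bm{A} \preceq \bm{B}$ implies $\bm{V}^{\mathsf{T}}\bm{A}\bm{V} \preceq \bm{V}^{\mathsf{T}}\bm{B}\bm{V}$ for any conformable $\bm{V}$, since $\bm{u}^{\mathsf{T}}\bm{V}^{\mathsf{T}}(\bm{B}-\bm{A})\bm{V}\bm{u} = (\bm{V}\bm{u})^{\mathsf{T}}(\bm{B}-\bm{A})(\bm{V}\bm{u}) \ge 0$; and (iv) order reversal under inversion, if $0 \prec \bm{A} \preceq \bm{B}$ then $\bm{B}^{-1} \preceq \bm{A}^{-1}$.

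Given the toolkit, most claims fall out quickly. For (3) (equivalently (2)) I add the chains $e^{-\alpha}\bm{X} \preceq \bm{Y} \preceq e^{\alpha}\bm{X}$ and $e^{-\alpha}\bm{Z} \preceq \bm{Q} \preceq e^{\alpha}\bm{Z}$ via (i), obtaining $e^{-\alpha}(\bm{X}+\bm{Z}) \preceq \bm{Y}+\bm{Q} \preceq e^{\alpha}(\bm{X}+\bm{Z})$, which is the claim. Claim (1) is then the special case $\bm{Q}=\bm{Z}$, after noting $\bm{Z} \approx_\alpha \bm{Z}$ holds for any PSD $\bm{Z}$ and $\alpha \ge 0$ because $e^{-\alpha} \le 1 \le e^{\alpha}$. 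For (4) I chain the hypotheses: from $\bm{Z} \preceq e^{\alpha_2}\bm{Y}$ and $\bm{Y} \preceq e^{\alpha_1}\bm{X}$, homogeneity (ii) gives $e^{\alpha_2}\bm{Y} \preceq e^{\alpha_1+\alpha_2}\bm{X}$, and transitivity of $\preceq$ yields $\bm{Z} \preceq e^{\alpha_1+\alpha_2}\bm{X}$; the lower bound is symmetric. For (6) I apply congruence (iii) to both sides of $e^{-\alpha}\bm{X} \preceq \bm{Y} \preceq e^{\alpha}\bm{X}$ and pull the scalars through $\bm{V}^{\mathsf{T}}(c\bm{X})\bm{V} = c\,\bm{V}^{\mathsf{T}}\bm{X}\bm{V}$.

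The one genuinely non-formal step is (5), and the work is entirely in establishing toolkit fact (iv); once that is in hand, (5) is immediate, since applying (iv) to $e^{-\alpha}\bm{X} \preceq \bm{Y} \preceq e^{\alpha}\bm{X}$ reverses the order and inverts the scalars to give $e^{-\alpha}\bm{X}^{-1} \preceq \bm{Y}^{-1} \preceq e^{\alpha}\bm{X}^{-1}$. To prove (iv) I would use a congruence argument: for $0 \prec \bm{A} \preceq \bm{B}$, conjugate by $\bm{A}^{-1/2}$ to reduce to $\bm{I} \preceq \bm{A}^{-1/2}\bm{B}\bm{A}^{-1/2} =: \bm{C}$, observe $\bm{C}^{-1} \preceq \bm{I}$ since its eigenvalues are reciprocals of those of $\bm{C}$, all at least $1$, and conjugate back by $\bm{A}^{1/2}$ using (iii) to recover $\bm{B}^{-1} \preceq \bm{A}^{-1}$. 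The main obstacle is thus isolating and proving this order-reversal property; the remaining five claims are bookkeeping with additivity, homogeneity, congruence, and transitivity. Note also that (5) requires nonsingularity so the matrices are positive \emph{definite} rather than merely semidefinite, which is exactly the stated hypothesis.
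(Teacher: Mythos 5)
Your proof is correct. The paper does not actually prove this lemma---it is quoted verbatim from Spielman and Peng~\cite{c11}---and your argument (unfolding $\approx_{\alpha}$ into its two Loewner inequalities and then using additivity, positive homogeneity, congruence invariance, and order reversal of inversion on positive definite matrices, the last established by conjugating $\bm{A} \preceq \bm{B}$ with $\bm{A}^{-1/2}$) is precisely the standard argument for these facts, so there is nothing to flag beyond the trivial remark that claims (1)--(3) implicitly use $\alpha \ge 0$, which is the only regime in which $\approx_{\alpha}$ is used.
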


The next lemma shows that good approximations of $\bm{M}^{-1}_0$ guarantee good approximated solutions of $\bm{M}_{0}\bm{x}=\bm{b}_{0}$.


\begin{lemma}\label{lemma_approx_matrix_inverse}
Let $\bm{Z}_0\approx_{\epsilon}\bm{M}^{-1}_0$, and $\tilde{\bm{x}} = \bm{Z}_0\bm{b}_0$. Then $\tilde{\bm{x}}$ is $\sqrt{2^{\epsilon}(e^{\epsilon} - 1)}$ approximate solution of $\bm{M}_{0}\bm{x}=\bm{b}_{0}$.
\end{lemma}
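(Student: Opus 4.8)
The plan is to reduce the energy-norm error to a Rayleigh quotient of a single symmetric operator and then read off the constant from the spectrum guaranteed by the relation $\approx_\epsilon$. First I would eliminate $\bm{b}_0$ in favor of the exact solution: since $\bm{x}^\star=\bm{M}_0^{-1}\bm{b}_0$ we have $\bm{b}_0=\bm{M}_0\bm{x}^\star$, so that
\begin{equation*}
\bm{x}^\star-\tilde{\bm{x}}=\bm{M}_0^{-1}\bm{b}_0-\bm{Z}_0\bm{b}_0=\left(\bm{M}_0^{-1}-\bm{Z}_0\right)\bm{b}_0.
\end{equation*}
Because $\bm{M}_0^{-1}-\bm{Z}_0$ is symmetric and $\|\bm{x}^\star\|_{\bm{M}_0}^2=\bm{b}_0^\mathsf{T}\bm{M}_0^{-1}\bm{b}_0$, the claim is equivalent to bounding the quadratic form $\bm{b}_0^\mathsf{T}\left(\bm{M}_0^{-1}-\bm{Z}_0\right)\bm{M}_0\left(\bm{M}_0^{-1}-\bm{Z}_0\right)\bm{b}_0$ against $2^\epsilon(e^\epsilon-1)\,\bm{b}_0^\mathsf{T}\bm{M}_0^{-1}\bm{b}_0$.

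The key step is to symmetrize via the congruence $\bm{V}=\bm{M}_0^{1/2}$. Applying fact (6) of Lemma~\ref{approx_lemma_facts} to $\bm{Z}_0\approx_\epsilon\bm{M}_0^{-1}$ gives
\begin{equation*}
\bm{N}:=\bm{M}_0^{1/2}\bm{Z}_0\bm{M}_0^{1/2}\approx_\epsilon\bm{M}_0^{1/2}\bm{M}_0^{-1}\bm{M}_0^{1/2}=\bm{I}.
\end{equation*}
Substituting $\bm{c}=\bm{M}_0^{-1/2}\bm{b}_0$ and inserting $\bm{M}_0=\bm{M}_0^{1/2}\bm{M}_0^{1/2}$ collapses the left-hand quadratic form to $\bm{c}^\mathsf{T}(\bm{I}-\bm{N})^2\bm{c}$ and the reference form $\bm{b}_0^\mathsf{T}\bm{M}_0^{-1}\bm{b}_0$ to $\bm{c}^\mathsf{T}\bm{c}$, so the squared error ratio is exactly the Rayleigh quotient of $(\bm{I}-\bm{N})^2$. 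The one subtlety here is that $\bm{Z}_0\bm{M}_0$ is itself not symmetric; conjugating by $\bm{M}_0^{1/2}$, i.e. measuring everything in the $\bm{M}_0$-inner product, is precisely what restores symmetry and lets fact (6) apply cleanly.

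Finally I would read off the spectrum. Expanding $\bm{N}\approx_\epsilon\bm{I}$ through the definition yields $e^{-\epsilon}\bm{I}\preceq\bm{N}\preceq e^\epsilon\bm{I}$, hence every eigenvalue of $\bm{I}-\bm{N}$ lies in $[1-e^\epsilon,\,1-e^{-\epsilon}]$. Since $e^\epsilon-1\ge 1-e^{-\epsilon}$, the Rayleigh quotient of $(\bm{I}-\bm{N})^2$ is bounded by $(e^\epsilon-1)^2$, and the stated constant then follows from the elementary inequality $(e^\epsilon-1)^2\le 2^\epsilon(e^\epsilon-1)$, equivalently $e^\epsilon-1\le 2^\epsilon$, which holds on the range of accuracies $\epsilon$ relevant to the solver. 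I expect the main obstacle to be the symmetrization step rather than this closing estimate: one must verify that congruence by $\bm{M}_0^{1/2}$ preserves the $\approx_\epsilon$ relation (fact (6)) and that the quadratic-form bookkeeping is exact, after which the eigenvalue bound is immediate.
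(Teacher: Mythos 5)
Your proof is correct and takes what is essentially the only natural route---the same one underlying the paper's proof (which the paper defers to an external appendix): eliminate $\bm{b}_0$ via $\bm{b}_0=\bm{M}_0\bm{x}^{\star}$, conjugate by $\bm{M}_0^{1/2}$ using fact (6) of Lemma~\ref{approx_lemma_facts} so that $\bm{N}=\bm{M}_0^{1/2}\bm{Z}_0\bm{M}_0^{1/2}\approx_{\epsilon}\bm{I}$, bound the Rayleigh quotient of $(\bm{I}-\bm{N})^{2}$ by $(e^{\epsilon}-1)^{2}$, and close with $e^{\epsilon}-1\le 2^{\epsilon}$. Your final caveat about the range of $\epsilon$ is not a defect of your argument but of the statement itself: $e^{\epsilon}-1\le 2^{\epsilon}$ holds precisely for $\epsilon\lesssim 1.19$, which covers every value the paper actually uses ($\epsilon_d<\tfrac{1}{3}\ln 2$ for the inverse chain, $\epsilon\le\tfrac{1}{2}$ for the solver), whereas for larger $\epsilon$ the lemma as stated is genuinely false---taking $\bm{Z}_0=e^{\epsilon}\bm{M}_0^{-1}$ (which satisfies $\bm{Z}_0\approx_{\epsilon}\bm{M}_0^{-1}$) gives error ratio exactly $e^{\epsilon}-1>\sqrt{2^{\epsilon}(e^{\epsilon}-1)}$---so no proof could remove that restriction; indeed your intermediate bound $e^{\epsilon}-1$ is sharper than the lemma's stated constant throughout the valid range.
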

\begin{proof}
The proof can be found in the appendix. 
\end{proof}

\subsection{The Parallel SDDM Solver}\label{Parallel:SDDM}\label{Sec:ParrallelSolver}
The parallel SDDM solver proposed in~\cite{c11} is a parallelized technique for solving the problem of Section~\ref{Sec:ProbDef}. It makes use of inverse approximated chains (see Definition~\ref{Def:InvChain}) to determine $\tilde{\bm{x}}$ and can be split in two steps. In the first step, denoted as Algorithm~\ref{Algo:Inv}, a ``crude'' approximation, $\bm{x}_{0}$, of $\bm{\tilde{x}}$ is returned. $\bm{x}_{0}$ is driven to the $\epsilon$-close solution, $\tilde{\bm{x}}$, using Richardson Preconditioning in Algorithm~\ref{Algo:Inv2}. Before we proceed, we start with the following two Lemmas which enable the definition of inverse chain approximation. 

\begin{lemma}~\cite{c11}\label{SDDM_splitting_lemma}
If $\bm{M} = \bm{D} - \bm{A}$ is an SDDM matrix, with $\bm{D}$ being positive diagonal, and $\bm{A}$ denoting a non-negative symmetric matrix, then $\bm{D} - \bm{A}\bm{D}^{-1}\bm{A}$ is also SDDM.
\end{lemma}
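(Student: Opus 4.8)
The plan is to verify directly the three defining properties of an SDDM matrix for the matrix $\tilde{\bm{M}} := \bm{D} - \bm{A}\bm{D}^{-1}\bm{A}$: symmetry together with non-positive off-diagonal entries, (weak) diagonal dominance, and positive definiteness. Symmetry is immediate, since $\bm{D}$ is diagonal and $\bm{A}$ is symmetric, so that $(\bm{A}\bm{D}^{-1}\bm{A})^{\mathsf{T}} = \bm{A}^{\mathsf{T}}\bm{D}^{-1}\bm{A}^{\mathsf{T}} = \bm{A}\bm{D}^{-1}\bm{A}$. For the off-diagonal signs I would write the entries explicitly: $[\bm{A}\bm{D}^{-1}\bm{A}]_{ij} = \sum_{k} \frac{[\bm{A}]_{ik}[\bm{A}]_{kj}}{[\bm{D}]_{kk}} \ge 0$, because $\bm{A}$ is non-negative and $\bm{D}$ is positive. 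Hence every off-diagonal entry of $\tilde{\bm{M}}$ equals $-[\bm{A}\bm{D}^{-1}\bm{A}]_{ij} \le 0$, as required of an M-matrix.

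For diagonal dominance, set $\bm{B} := \bm{A}\bm{D}^{-1}\bm{A}$ and note that the required inequality $[\tilde{\bm{M}}]_{ii} \ge -\sum_{j\ne i}[\tilde{\bm{M}}]_{ij} = \sum_{j\ne i}[\bm{B}]_{ij}$ is equivalent to the full row-sum bound $\sum_{j}[\bm{B}]_{ij} \le [\bm{D}]_{ii}$. I would then compute $\sum_{j}[\bm{B}]_{ij} = \sum_{k}\frac{[\bm{A}]_{ik}}{[\bm{D}]_{kk}}\sum_{j}[\bm{A}]_{kj}$ and invoke the diagonal dominance of $\bm{M}$ itself, which gives $\sum_{j}[\bm{A}]_{kj} \le [\bm{D}]_{kk}$ for every $k$ (here $[\bm{A}]_{kk}=0$). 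Substituting this bound collapses the expression to $\sum_{k}[\bm{A}]_{ik} \le [\bm{D}]_{ii}$, again by diagonal dominance of $\bm{M}$, completing this step.

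The remaining and most delicate property is positive definiteness. Here I would factor $\tilde{\bm{M}} = \bm{D}^{1/2}(\bm{I} - \bm{J}^2)\bm{D}^{1/2}$, where $\bm{J} := \bm{D}^{-1/2}\bm{A}\bm{D}^{-1/2}$, using that $\bm{D}^{-1/2}\bm{A}\bm{D}^{-1}\bm{A}\bm{D}^{-1/2} = \bm{J}^2$; likewise $\bm{M} = \bm{D}^{1/2}(\bm{I} - \bm{J})\bm{D}^{1/2}$. It therefore suffices to show $\bm{I} - \bm{J}^2 \succ 0$, i.e.\ that every eigenvalue of the symmetric matrix $\bm{J}$ lies strictly inside $(-1,1)$. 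The hypothesis that $\bm{M}$ is SDDM (hence positive definite) yields $\bm{I} - \bm{J} \succ 0$, so $\lambda_{\max}(\bm{J}) < 1$, but this bound alone does \emph{not} exclude an eigenvalue $\le -1$, which is exactly the subtle point.

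The way around this is to observe that $\bm{J}$ is an entrywise non-negative symmetric matrix, so by the Perron--Frobenius theorem its spectral radius equals its largest eigenvalue, $\rho(\bm{J}) = \lambda_{\max}(\bm{J}) < 1$. Consequently $|\lambda| \le \rho(\bm{J}) < 1$ for every eigenvalue $\lambda$ of $\bm{J}$, whence $\bm{I} - \bm{J}^2 \succ 0$ and thus $\tilde{\bm{M}} \succ 0$. Combining the three verified properties establishes that $\tilde{\bm{M}}$ is SDDM. I expect the Perron--Frobenius step to be the crux of the argument: positive definiteness of $\bm{M}$ only controls the top of the spectrum of $\bm{J}$, and it is the non-negativity of $\bm{J}$ that pins down the lower end and prevents $\bm{I}-\bm{J}^2$ from becoming singular or indefinite.
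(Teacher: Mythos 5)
Your proof is correct, and it follows essentially the same route as the source this lemma is cited from (the paper itself gives no inline proof, deferring to Spielman--Peng \cite{c11}): entrywise verification of the sign pattern and diagonal dominance via row sums, then positive definiteness through the normalization $\bm{J}=\bm{D}^{-1/2}\bm{A}\bm{D}^{-1/2}$, where $\bm{M}\succ 0$ controls $\lambda_{\max}(\bm{J})<1$ and Perron--Frobenius (non-negativity of $\bm{J}$) forces $\rho(\bm{J})=\lambda_{\max}(\bm{J})$, giving $\bm{I}-\bm{J}^{2}\succ 0$. You also correctly identified the one genuinely delicate point: positive definiteness of $\bm{M}$ alone does not rule out an eigenvalue of $\bm{J}$ at or below $-1$, and the non-negativity of $\bm{A}$ is what closes that gap.
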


\begin{lemma}~\cite{c11}\label{approx_inverse_formulae_lemma}
Let $\bm{M} =\bm{D} - \bm{A}$ be an SDDM matrix, where $\bm{D}$ is positive diagonal and, $\bm{A}$ a symmetric matrix. Then 
\begin{align}\label{Inv_of_SDDM}
\left(\bm{D}-\bm{A}\right)^{-1} &= \frac{1}{2}\Big[\bm{D}^{-1} + \left(\bm{I} + \bm{D}^{-1}\bm{A}\right)\left(\bm{D} - \bm{A}\bm{D}^{-1}\bm{A}\right)^{-1} \\ \nonumber 
&\hspace{13em} \left(\bm{I} + \bm{A}\bm{D}^{-1}\right)\Big].
\end{align}
\end{lemma}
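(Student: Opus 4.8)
The plan is to verify the identity by left-multiplication rather than by manipulating inverses. Since $\bm{M} = \bm{D} - \bm{A}$ is SDDM and hence invertible, and since $\bm{D} - \bm{A}\bm{D}^{-1}\bm{A}$ is SDDM (and thus invertible) by Lemma~\ref{SDDM_splitting_lemma}, both the left-hand side and the inverse appearing on the right-hand side are well-defined. It therefore suffices to show that $\bm{M}$ times the right-hand side equals $\bm{I}$; the asserted equality with $\left(\bm{D}-\bm{A}\right)^{-1}$ then follows from uniqueness of the inverse. The single algebraic fact that drives the whole argument is
\[
\left(\bm{D} - \bm{A}\right)\left(\bm{I} + \bm{D}^{-1}\bm{A}\right) = \bm{D} - \bm{A}\bm{D}^{-1}\bm{A},
\]
which I would obtain by expanding the product: the cross terms $\bm{D}\bm{D}^{-1}\bm{A} = \bm{A}$ and $-\bm{A}$ cancel, leaving precisely $\bm{D} - \bm{A}\bm{D}^{-1}\bm{A}$. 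Crucially, this step uses only invertibility of $\bm{D}$ and does not require $\bm{D}$ and $\bm{A}$ to commute.

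Next I would multiply the bracketed expression on the right-hand side of the statement by $\bm{D} - \bm{A}$ on the left, handling the two summands separately. The first summand contributes $\left(\bm{D} - \bm{A}\right)\bm{D}^{-1} = \bm{I} - \bm{A}\bm{D}^{-1}$. For the second summand, the displayed identity shows that $\left(\bm{D} - \bm{A}\right)\left(\bm{I} + \bm{D}^{-1}\bm{A}\right)$ equals $\bm{D} - \bm{A}\bm{D}^{-1}\bm{A}$, which then cancels against the factor $\left(\bm{D} - \bm{A}\bm{D}^{-1}\bm{A}\right)^{-1}$ and leaves exactly $\bm{I} + \bm{A}\bm{D}^{-1}$. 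Summing the two contributions gives $\left(\bm{I} - \bm{A}\bm{D}^{-1}\right) + \left(\bm{I} + \bm{A}\bm{D}^{-1}\right) = 2\bm{I}$, so that $\bm{D} - \bm{A}$ times the full right-hand side, including the factor $\frac{1}{2}$, equals $\bm{I}$, as desired.

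I do not expect a genuine obstacle, since the result is an exact matrix identity rather than an estimate; the only point requiring care is the non-commutativity of $\bm{D}$ and $\bm{A}$, so that all cancellations must respect the order of the factors. The identity $\left(\bm{D} - \bm{A}\right)\left(\bm{I} + \bm{D}^{-1}\bm{A}\right) = \bm{D} - \bm{A}\bm{D}^{-1}\bm{A}$ is exactly what guarantees the telescoping without assuming $\bm{D}\bm{A} = \bm{A}\bm{D}$. A fully symmetric alternative, should one wish to avoid left-multiplication, is to substitute the factorization $\bm{D} - \bm{A}\bm{D}^{-1}\bm{A} = \left(\bm{D} + \bm{A}\right)\bm{D}^{-1}\left(\bm{D} - \bm{A}\right)$ into the inverse and cancel telescopically; this route, however, additionally requires invertibility of $\bm{D} + \bm{A}$, which the left-multiplication argument sidesteps entirely.
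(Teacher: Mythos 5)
Your proof is correct. Note that the paper itself contains no argument for this lemma: it is quoted verbatim from Peng and Spielman~\cite{c11}, and the paper defers all omitted proofs to an external appendix, so the only meaningful check is internal validity — and your argument passes. The driving identity $\left(\bm{D}-\bm{A}\right)\left(\bm{I}+\bm{D}^{-1}\bm{A}\right) = \bm{D}-\bm{A}\bm{D}^{-1}\bm{A}$ holds by direct expansion without any commutativity assumption, the cancellation against $\left(\bm{D}-\bm{A}\bm{D}^{-1}\bm{A}\right)^{-1}$ is legitimate because Lemma~\ref{SDDM_splitting_lemma} guarantees that matrix is SDDM (hence invertible) for the non-negative $\bm{A}$ arising from the standard splitting, and a one-sided inverse of a square matrix is the two-sided inverse, so left-multiplication suffices. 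Your route is also slightly cleaner than the alternative you mention (substituting $\bm{D}-\bm{A}\bm{D}^{-1}\bm{A} = \left(\bm{D}+\bm{A}\right)\bm{D}^{-1}\left(\bm{D}-\bm{A}\right)$ and telescoping), precisely because it never needs invertibility of $\bm{D}+\bm{A}$.
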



Given the results in Lemmas \ref{SDDM_splitting_lemma} and \ref{approx_inverse_formulae_lemma}, we now can consider inverse approximated chains of $\bm{M}_0$:
\begin{definition}\label{Def:InvChain}
Let $\mathcal{C} = \{\bm{M}_0, \bm{M}_1, \ldots, \bm{M}_d\}$  be a collection of SDDM matrices such that $\bm{M}_i = \bm{D}_i - \bm{A}_i$, with $\bm{D}_i$ a positive diagonal matrix, and $\bm{A}_i$ denoting a non-negative symmetric matrix. Then $\mathcal{C}$ is an inverse approximated chain if there exists  positive real numbers $\epsilon_0, \epsilon_1, \ldots, \epsilon_d$ such that: (1) For $i = 1,\ldots, d$: $\bm{D}_i - \bm{A}_i \approx_{e_{i-1}} \bm{D}_{i-1} - \bm{A}_{i-1}\bm{D}^{-1}_{i-1}\bm{A}_{i-1}$, (2) $\bm{D}_i \approx_{\epsilon_{i-1}}\bm{D}_{i-1}$, and (3) $\bm{D}_d\approx_{\epsilon_d} \bm{D}_d - \bm{A}_d$. 
\end{definition}
 

\begin{algorithm}\label{Algo:CrudeParallel}
  \caption{$\text{ParallelRSolve}\left(\bm{M}_0,\bm{M}_1,\ldots, \bm{M}_d, \bm{b}_0\right)$}
  \label{Algo:Inv}
  \begin{algorithmic}[1]
	\STATE \textbf{Input}: Inverse approximated chain, $\{\bm{M}_0,\bm{M}_1,\ldots, \bm{M}_d\}$, and $\bm{b}_0$ being 
	
	\STATE \textbf{Output}: The ``crude'' approximation, $\bm{x}_0$, of $\bm{x}^{\star}$    
    \FOR  {$i=1$ to $d$} 
    	\STATE $\bm{b}_i = \left(\bm{I}+\bm{A}_{i-1}\bm{D}^{-1}_{i-1}\right)\bm{b}_{i-1}$
    \ENDFOR 
    \STATE $\bm{x}_d = \bm{D}^{-1}_d\bm{b}_d$
    \FOR  {$i=d-1$ to $0$} 
    	\STATE $\bm{x}_{i} = \frac{1}{2}\left[\bm{D}^{-1}_i\bm{b}_{i} + \left(\bm{I} + \bm{D}^{-1}_i\bm{A}_i\right)\bm{x}_{i+1}\right]$
    \ENDFOR 
    \STATE \textbf{return} $x_0$ 
  \end{algorithmic}
\end{algorithm}

The quality of the ``crude'' solution returned by Algorithm~\ref{Algo:Inv} is quantified in the following lemma:
\begin{lemma}~\cite{c11}\label{Rude_Alg_guarantee_Lemma}
Let $\{\bm{M}_0, \bm{M}_1,\ldots, \bm{M}_d\}$ be the inverse approximated chain and denote $\bm{Z}_0$ be the operator defined by $\text{ParallelRSolve}\left(\bm{M}_0, \bm{M}_1,\ldots, \bm{M}_d, \bm{b}_0\right)$, namely, $\bm{x}_0 = \bm{Z}_0\bm{b}_0$. Then
\begin{equation}\label{appr_inv_express}
\bm{Z}_0\approx_{\sum_{i=0}^{d}\epsilon_i}\bm{M}^{-1}_0
\end{equation}
\end{lemma}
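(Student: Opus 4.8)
The plan is to prove a stronger per-level statement by backward induction and read off the case $i=0$. First I would identify the linear operator $\bm{Z}_i$ implicitly computed by the algorithm, defined through $\bm{x}_i = \bm{Z}_i\bm{b}_i$, where $\bm{b}_i$ is the vector produced by the forward loop and $\bm{x}_i$ the vector produced by the backward loop. Reading off the bottom-level assignment $\bm{x}_d = \bm{D}_d^{-1}\bm{b}_d$ gives the base relation $\bm{Z}_d = \bm{D}_d^{-1}$, and substituting the forward update $\bm{b}_{i+1} = (\bm{I}+\bm{A}_i\bm{D}_i^{-1})\bm{b}_i$ into the backward update $\bm{x}_i = \tfrac12[\bm{D}_i^{-1}\bm{b}_i + (\bm{I}+\bm{D}_i^{-1}\bm{A}_i)\bm{x}_{i+1}]$ yields the recursion
\begin{equation*}
\bm{Z}_i = \tfrac{1}{2}\Big[\bm{D}_i^{-1} + (\bm{I}+\bm{D}_i^{-1}\bm{A}_i)\,\bm{Z}_{i+1}\,(\bm{I}+\bm{A}_i\bm{D}_i^{-1})\Big],
\end{equation*}
where I use that $\bm{D}_i$ is diagonal and $\bm{A}_i$ symmetric, so $(\bm{I}+\bm{A}_i\bm{D}_i^{-1})^{\mathsf{T}} = \bm{I}+\bm{D}_i^{-1}\bm{A}_i$. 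The claim to establish is $\bm{Z}_i \approx_{\sum_{j=i}^{d}\epsilon_j} \bm{M}_i^{-1}$ for every $i$, so that $i=0$ gives exactly~\eqref{appr_inv_express}.

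For the base case $i=d$, condition~(3) of the chain gives $\bm{D}_d \approx_{\epsilon_d} \bm{D}_d - \bm{A}_d = \bm{M}_d$, and since both matrices are nonsingular, property~(5) of Lemma~\ref{approx_lemma_facts} yields $\bm{Z}_d = \bm{D}_d^{-1} \approx_{\epsilon_d} \bm{M}_d^{-1}$, which matches $\sum_{j=d}^{d}\epsilon_j = \epsilon_d$. For the inductive step I would assume $\bm{Z}_{i+1} \approx_{\sum_{j=i+1}^{d}\epsilon_j} \bm{M}_{i+1}^{-1}$ and compare $\bm{Z}_i$ against the exact identity of Lemma~\ref{approx_inverse_formulae_lemma},
\begin{equation*}
\bm{M}_i^{-1} = \tfrac{1}{2}\Big[\bm{D}_i^{-1} + (\bm{I}+\bm{D}_i^{-1}\bm{A}_i)(\bm{D}_i - \bm{A}_i\bm{D}_i^{-1}\bm{A}_i)^{-1}(\bm{I}+\bm{A}_i\bm{D}_i^{-1})\Big].
\end{equation*}
Lemma~\ref{SDDM_splitting_lemma} guarantees $\bm{D}_i - \bm{A}_i\bm{D}_i^{-1}\bm{A}_i$ is SDDM, hence invertible, so condition~(1) together with property~(5) gives $\bm{M}_{i+1}^{-1} \approx_{\epsilon_i} (\bm{D}_i - \bm{A}_i\bm{D}_i^{-1}\bm{A}_i)^{-1}$; chaining this with the inductive hypothesis via transitivity (property~(4)) yields $\bm{Z}_{i+1} \approx_{\sum_{j=i}^{d}\epsilon_j} (\bm{D}_i - \bm{A}_i\bm{D}_i^{-1}\bm{A}_i)^{-1}$. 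Applying the congruence property~(6) with $\bm{V} = \bm{I}+\bm{A}_i\bm{D}_i^{-1}$ preserves this relation, and finally adding the common term $\bm{D}_i^{-1}$ (property~(1)) and multiplying by the positive scalar $\tfrac12$ gives $\bm{Z}_i \approx_{\sum_{j=i}^{d}\epsilon_j} \bm{M}_i^{-1}$, closing the induction since $\epsilon_i + \sum_{j=i+1}^{d}\epsilon_j = \sum_{j=i}^{d}\epsilon_j$.

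The main obstacle is not any single deep estimate but the bookkeeping that makes the telescoping work. I must verify that $\bm{Z}_{i+1}$ is genuinely positive semidefinite, which follows because the inductive hypothesis sandwiches it between positive multiples of the positive definite matrix $\bm{M}_{i+1}^{-1}$, so that the $\approx_{\alpha}$ calculus of Lemma~\ref{approx_lemma_facts} legitimately applies at each step; I also rely on the fact that the defining sandwich $e^{-\alpha}\bm{X}\preceq\bm{Y}\preceq e^{\alpha}\bm{X}$ is stable under multiplication by the positive scalar $\tfrac12$ and under adding the same matrix $\bm{D}_i^{-1}$ to both sides. Care is further needed to align the index convention of condition~(1), which relates level $i+1$ to the Schur-type matrix built from level $i$ with parameter $\epsilon_i$, so that the accumulated error is exactly $\sum_{i=0}^{d}\epsilon_i$ with no term double counted; condition~(2) of the chain plays no role here and is reserved for bounding the chain length elsewhere.
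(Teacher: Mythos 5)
Your proof is correct and follows essentially the same route as the source argument in Spielman--Peng~\cite{c11}: backward induction down the chain, combining the exact inverse identity of Lemma~\ref{approx_inverse_formulae_lemma} with the $\approx_{\alpha}$ calculus of Lemma~\ref{approx_lemma_facts} (inversion, transitivity, congruence, and addition), with conditions~(1) and~(3) of Definition~\ref{Def:InvChain} supplying the per-level errors that telescope to $\sum_{i=0}^{d}\epsilon_i$. Your bookkeeping of the index shift in condition~(1) and the observation that condition~(2) is not needed for this lemma are both accurate.
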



Algorithm~\ref{Algo:Inv} returns a ``crude'' solution to $\bm{M}_{0}\bm{x}=\bm{b}$. To obtain arbitrary close solutions, Spielman {\it et. al}~\cite{c11} introduced the \emph{preconditioned Richardson iterative scheme}, summarized in Algorithm~\ref{Algo:Inv2}. Following their analysis, Lemma~\ref{Exact_Alg_guarantee_lemma} provides the iteration count needed by Algorithm~\ref{Algo:Inv2} to arrive at $\tilde{\bm{x}}$. 
\begin{algorithm}[h!]
  \caption{$\text{ParallelESolve}\left(\bm{M}_0, \bm{M}_1,\ldots, \bm{M}_d,  \bm{b}_0, \epsilon\right)$}
  \label{Algo:Inv2}
  \begin{algorithmic}[1]
	\STATE \textbf{Input}: Inverse approximated chain $\{\bm{M}_0,\bm{M}_1,\ldots, \bm{M}_d\}$, $\bm{b}_0$, and $\epsilon$. 
	\STATE \textbf{Output}: $\epsilon$ close approximation, $\tilde{\bm{x}}$, of $\bm{x}^*$
	\STATE \textbf{Initialize}: $\bm{y}_0 = 0$; \\
	$\chi = \text{ParallelRSolve}\left(\bm{M}_0,\bm{M}_1,\ldots, \bm{M}_d, \bm{b}_0\right)$ (i.e., Algorithm~\ref{Algo:Inv})
    \FOR  {$k=1$ to $q$}
    	\STATE $\bm{u}_{k}^{(1)} = \bm{M}_0\bm{y}_{k-1}$
    	\STATE $\bm{u}_{k}^{(2)} = \text{ParallelRSolve}\left(\bm{M}_0, \bm{M}_1,\ldots, \bm{M}_d, \bm{u}_{k}^{(1)}\right)$
    	\STATE $\bm{y}_{k} = \bm{y}_{k-1} - \bm{u}_{k}^{(2)} + \chi$ 
    \ENDFOR 
    \STATE $\tilde{\bm{x}} = \bm{y}_q$
    \STATE \textbf{return} $\tilde{\bm{x}}$ 
  \end{algorithmic}
\end{algorithm}

\begin{lemma}~\cite{c11}\label{Exact_Alg_guarantee_lemma}
Let $\{\bm{M}_0, \bm{M}_1\ldots \bm{M}_d\}$ be an inverse approximated chain such that $\sum_{i=1}^{d}\epsilon_i < \frac{1}{3}\ln2$. Then $\text{ParallelESolve}\left(\bm{M}_0, \bm{M}_1,\ldots, \bm{M}_d, \bm{b}_0, \epsilon\right)$ arrives at an $\epsilon$ close solution of $\bm{x}^{\star}$ in $q = \mathcal{O}\left(\log\frac{1}{\epsilon}\right)$ iterations. 
\end{lemma}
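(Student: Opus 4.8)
The plan is to recognize Algorithm~\ref{Algo:Inv2} as a preconditioned Richardson iteration and to track its error in the $\bm{M}_0$-norm. Writing $\bm{Z}_0$ for the linear operator realized by $\text{ParallelRSolve}$ (Algorithm~\ref{Algo:Inv}), so that $\chi = \bm{Z}_0\bm{b}_0$ and $\bm{u}_k^{(2)} = \bm{Z}_0\bm{M}_0\bm{y}_{k-1}$, the update for $\bm{y}_k$ collapses to
\begin{equation*}
\bm{y}_k = \left(\bm{I} - \bm{Z}_0\bm{M}_0\right)\bm{y}_{k-1} + \bm{Z}_0\bm{b}_0 .
\end{equation*}
Since $\bm{M}_0\bm{x}^{\star} = \bm{b}_0$, the exact solution $\bm{x}^{\star}$ is a fixed point of this map; subtracting it gives the clean error recursion $\bm{e}_k = \left(\bm{I} - \bm{Z}_0\bm{M}_0\right)\bm{e}_{k-1}$ for $\bm{e}_k := \bm{y}_k - \bm{x}^{\star}$. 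With the initialization $\bm{y}_0 = 0$ we have $\bm{e}_0 = -\bm{x}^{\star}$, so the whole statement reduces to bounding the powers of $\bm{I} - \bm{Z}_0\bm{M}_0$ in the $\bm{M}_0$-norm.

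Next I would pass to the Euclidean norm by conjugating with $\bm{M}_0^{1/2}$. Setting $\bm{f}_k := \bm{M}_0^{1/2}\bm{e}_k$ and $\bm{B} := \bm{M}_0^{1/2}\bm{Z}_0\bm{M}_0^{1/2}$, one checks the identity $\bm{M}_0^{1/2}\left(\bm{I} - \bm{Z}_0\bm{M}_0\right) = \left(\bm{I} - \bm{B}\right)\bm{M}_0^{1/2}$, hence $\bm{f}_k = \left(\bm{I} - \bm{B}\right)\bm{f}_{k-1}$ and $\|\bm{e}_k\|_{\bm{M}_0} = \|\bm{f}_k\|_2$. Because $\bm{Z}_0$ is symmetric, $\bm{B}$ and $\bm{I} - \bm{B}$ are symmetric, so their operator $2$-norm equals their spectral radius and the contraction factor is governed entirely by the spectrum of $\bm{B}$. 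To control that spectrum I would invoke Lemma~\ref{Rude_Alg_guarantee_Lemma}, which gives $\bm{Z}_0 \approx_{\alpha} \bm{M}_0^{-1}$ with $\alpha = \sum_i \epsilon_i$; applying fact~(6) of Lemma~\ref{approx_lemma_facts} with $\bm{V} = \bm{M}_0^{1/2}$ yields $\bm{B} \approx_{\alpha} \bm{I}$, i.e. $e^{-\alpha}\bm{I} \preceq \bm{B} \preceq e^{\alpha}\bm{I}$. Thus every eigenvalue of $\bm{B}$ lies in $[e^{-\alpha}, e^{\alpha}]$ and every eigenvalue of $\bm{I} - \bm{B}$ in $[1 - e^{\alpha}, 1 - e^{-\alpha}]$, so the spectral radius is $\max\left(e^{\alpha} - 1,\, 1 - e^{-\alpha}\right) = e^{\alpha} - 1$ (the second term is smaller since $e^{\alpha} + e^{-\alpha} \ge 2$). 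The hypothesis $\alpha < \tfrac{1}{3}\ln 2$ then forces $e^{\alpha} - 1 < 2^{1/3} - 1 < 1$, a genuine contraction.

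Iterating this contraction $q$ times gives $\|\bm{y}_q - \bm{x}^{\star}\|_{\bm{M}_0} \le \left(e^{\alpha} - 1\right)^{q}\|\bm{x}^{\star}\|_{\bm{M}_0}$, so the $\epsilon$-approximation criterion holds once $\left(e^{\alpha} - 1\right)^{q} \le \epsilon$, that is, once $q \ge \log(1/\epsilon)\big/\log\!\left(1/(e^{\alpha} - 1)\right)$. Since $e^{\alpha} - 1$ is bounded away from $1$ by a constant under the hypothesis, the denominator is a fixed positive constant and $q = \mathcal{O}\!\left(\log\tfrac{1}{\epsilon}\right)$ iterations suffice.

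The main obstacle, and really the only nontrivial step, is the spectral bound on $\bm{I} - \bm{Z}_0\bm{M}_0$: one must convert the \emph{multiplicative} $\approx_{\alpha}$ guarantee on $\bm{Z}_0$ from Lemma~\ref{Rude_Alg_guarantee_Lemma} into an \emph{additive}, symmetric bound on the iteration matrix. This needs both the $\bm{M}_0^{1/2}$-conjugation, which symmetrizes the otherwise non-symmetric product $\bm{Z}_0\bm{M}_0$ and identifies the $\bm{M}_0$-norm contraction factor with the spectral radius, and the symmetry of $\bm{Z}_0$. I would also reconcile the index range in the hypothesis ($\sum_{i=1}^{d}\epsilon_i$) with the total $\alpha = \sum_{i=0}^{d}\epsilon_i$ appearing in Lemma~\ref{Rude_Alg_guarantee_Lemma}; absorbing $\epsilon_0$ (it vanishes in the chain construction) keeps $\alpha$ below the $\tfrac{1}{3}\ln 2$ threshold that makes the contraction argument go through.
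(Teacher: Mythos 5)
Your proposal is correct and takes essentially the same route as the paper, which does not prove this lemma inline but explicitly defers to the analysis of Spielman and Peng~\cite{c11}: that analysis is precisely your preconditioned-Richardson argument --- the error recursion $\bm{e}_k = \left(\bm{I} - \bm{Z}_0\bm{M}_0\right)\bm{e}_{k-1}$, conjugation by $\bm{M}_0^{1/2}$, and the contraction bound $e^{\alpha}-1 < 2^{1/3}-1$ obtained from $\bm{Z}_0 \approx_{\alpha} \bm{M}_0^{-1}$ (Lemma~\ref{Rude_Alg_guarantee_Lemma}) via fact~(6) of Lemma~\ref{approx_lemma_facts}. Your closing remark about reconciling $\sum_{i=1}^{d}\epsilon_i$ in the hypothesis with the total $\sum_{i=0}^{d}\epsilon_i$ from Lemma~\ref{Rude_Alg_guarantee_Lemma} correctly flags an indexing sloppiness in the statement itself and resolves it in the only sensible way.
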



\section{NETWORK FLOW OPTIMIZATION}\label{Sec:ProbDefS}
We consider a network represented by a directed graph $\mathcal{G}=\left(\mathcal{N},\mathcal{E}\right)$ with node set $\mathcal{N}=\{1,\dots, N\}$ and edge set $\mathcal{E}=\{1,\dots, E\}$. The flow vector is denoted by $\bm{x}=\left[x^{(e)}\right]_{e\in\mathcal{E}}$, with $x^{(e)}$ representing the flow on edge $e$. The flow conservation conditions at nodes can be compactly represented as 
\begin{equation*}
\bm{A}\bm{x}=\bm{b},
\end{equation*}
where $\bm{A}$ is the $N \times E$ node-edge incidence matrix of $\mathcal{G}$ defined as 

\begin{displaymath}
   \bm{A}_{i,j} = \left\{
     \begin{array}{lr}
       1 & \text{if edge $j$ leaves node $i$} \\
      - 1 & \text{if edge $j$ enters node $i$} \\
      0 & \text{otherwise,}
     \end{array}
   \right.
\end{displaymath} 
and the vector $\bm{b} \in \bm{1}^{\perp}$ denotes the external source, i.e., $b^{(i)} > 0$ (or $b^{(i) } < 0$) indicates $b^{(i)}$ units of external flow enters (or leaves) node $i$. A cost function $\bm{\Phi}_{e}: \mathbb{R} \rightarrow \mathbb{R}$ is associated with each edge $e$. Namely, $\bm{\Phi}_{e}(x^{(e)})$ denotes the cost on edge $e$ as a function of the edge flow $x^{(e)}$. We assume that the cost functions $\bm{\Phi}_{e}$ are strictly convex and twice differentiable. Consequently, the minimum cost networks optimization problem can be written as 
\begin{align}
\label{Eq:OptimAll}
&\min_{\bm{x}} \sum_{e=1}^{E} \bm{\Phi}_{e}(\bm{x}^{(e)}) \\ \nonumber
&\text{s.t. $\bm{A}\bm{x}=\bm{b}$}
\end{align}

Our goal is to investigate Newton type methods for solving the problem in~\ref{Eq:OptimAll} in a distributed fashion. Before diving into these details, however, we next present basic ingredients needed for the remainder of the paper. 

\subsection{Dual Subgradient Method} 
The dual subgradient method optimizes the problem in Equation~\ref{Eq:OptimAll} by descending in the dual domain. The Lagrangian, $l: \mathbb{R}^{E} \times \mathbb{R}^{N} \rightarrow \mathbb{R}$ is given by 
\begin{equation*}
l (\bm{x},\bm{\lambda}) = -\sum_{e=1}^{E} \bm{\Phi}_{e}({x}^{(e)}) + \bm{\lambda}^{\mathsf{T}} (\bm{A}\bm{x}-\bm{b}). 
\end{equation*}

The dual function $q(\bm{\lambda})$ is then derived as 
\begin{align*}
q(\bm{\lambda}) &= \inf_{\bm{x} \in \mathbb{R}^{E}} l(\bm{x},\bm{\lambda}) \\
& = \inf_{\bm{x} \in \mathbb{R}^{E}} \left(-\sum_{e=1}^{E} \bm{\Phi}_{e} (x^{(e)}) + \bm{\lambda}^{\mathsf{T}}\bm{A}\bm{x}\right) - \bm{\lambda}^{\mathsf{T}}\bm{b} \\ 
& = \sum_{e=1}^{E} \inf_{x^{(e)} \in \mathbb{R}} \left(-\bm{\Phi}_{e}(x^{(e)}) + \left(\bm{\lambda}^{\mathsf{T}}\bm{A}\right)^{(e)}x^{(e)}\right) - \bm{\lambda}^{\mathsf{T}}\bm{b}.
\end{align*}
Hence, it can be clearly seen that the evaluation of the dual function $q(\bm{\lambda})$ decomposes into E one-dimensional optimization problems. We assume that each of these optimization problems have an optimal solution, which is unique by the strict convexity of the functions $\bm{\Phi}_{e}$. Denoting the solutions by $x^{(e)} (\bm{\lambda})$ and using the first order optimality conditions, it can be seen that for each edge, e, $x^{(e)}(\bm{\lambda})$ is given by\footnote{Note that if the dual is not continuously differentiable, the a generalized Hessian can be used.}
\begin{equation}
\label{Eq:MapBack}
x^{(e)}(\bm{\lambda})=[\dot{\bm{\Phi}}_{e}]^{-1}\left(\lambda^{(i)}-\lambda^{(j)}\right),
\end{equation}
where $i \in  \mathcal{N}$ and $j \in \mathcal{N}$ denote the source and destining nodes of edge $e=(i,j)$, respectively (see~\cite{c6} for details). Therefore, for an edge $e$, the evaluation of $x^{(e)}(\bm{\lambda})$ can be performed based on local information about the edge's cost function and the dual variables of the incident nodes, $i$ and $j$. 

The dual problem is defined as $\max_{\bm{\lambda}\in \mathbb{R}^{N}} q(\bm{\lambda})$. Since the dual function is convex, the optimization problem can be solved using gradient descent according to
\begin{equation}
\label{Eq:GD}
\bm{\lambda}_{k+1} = \bm{\lambda}_{k} - \alpha_{k}\bm{g}_{k} \hfill \ \ \text{for all $k \geq 0$,}
\end{equation}
with $k$ being the iteration index, and $\bm{g}_{k}=\bm{g}\left(\bm{\lambda}_{k}\right)=\nabla q(\bm{\lambda}_{k})$ denoting the gradient of the dual function evaluated at $\bm{\lambda}=\bm{\lambda}_{k}$. Importantly, the computation of the gradient can be performed as $\bm{g}_{k}=\bm{A}\bm{x}\left(\bm{\lambda}_{k}\right)-\bm{b}$, with $\bm{x}(\lambda_{k})$ being a vector composed of $x^{(e)}(\bm{\lambda}_{k})$ as determined by Equation~\ref{Eq:MapBack}. Further, due to the sparsity pattern of the incidence matrix $\bm{A}$, the $i^{th}$  element, $g_{k}^{(i)}$, of the gradient $\bm{g}_{k}$ can be computed as 
\begin{equation}
\label{Eq:GDDist}
g_{k}^{(i)}=\sum_{e=(i,j)} x^{(e)}(\bm{\lambda}_{k}) - \sum_{e=(j,i)} x^{(e)}(\bm{\lambda}_{k}) - b^{(i)}.
\end{equation}

Clearly, the algorithm in Equation~\ref{Eq:GD} can be implemented in a distributed fashion, where each node, $i$, maintains information about its dual, $\lambda_{k}^{(i)}$, and primal, $x^{(e)}(\bm{\lambda}_{k})$, iterates of the outgoing edges $e=(i,j)$. Gradient components can then be evaluated as per~\ref{Eq:GDDist} using only local information. Dual variables can then be updated using~\ref{Eq:GD}. Given the updated dual variables, the primal variables can be computed using~\ref{Eq:MapBack}. 

Although the distributed implementation avoids the cost and fragility of collecting all information at centralized location, practical applicability of gradient descent is hindered by slow convergence rates. This motivates the consideration of Newton methods discussed next. 

\subsection{Newton's Method for Dual Descent}
Newton's method is a descent algorithm along a scaled version of the gradient. Its iterates are typically given by 
\begin{equation}
\bm{\lambda}_{k+1} = \bm{\lambda}_{k}+\alpha_{k}\bm{d}_{k} \hfill \ \ \  \text{for all $k \geq 0$,}
\end{equation}
with $\bm{d}_{k}$ being the Newton direction at iteration $k$, and $\alpha_{k}$ denoting the step size. The Newton direction satisfies
\begin{equation}
\bm{H}_{k}\bm{d}_{k}=-\bm{g}_{k},
\end{equation} 
with $\bm{H}_{k}=\bm{H}(\bm{\lambda}_{k})=\nabla^{2}q(\bm{\lambda}_{k})$ being the Hessian of the dual function at the current iteration $k$.

\subsubsection{Properties of the Dual and Assumptions}
Here, we detail some assumptions needed by our approach. We also derive essential Lemmas quantifying properties of the dual Hessian. 
\begin{assumption}
The graph, $\mathcal{G}$, is connected, non-bipartite and has algebraic connectivity lower bound by a constant $\bm{\omega}$. 
\end{assumption}

\begin{assumption}\label{Ass:Two}
The cost functions, $\bm{\Phi}_{e}(\cdot)$, in Equation~\ref{Eq:OptimAll} are 
\begin{enumerate}
\item twice continuously differentiable satisfying 
\begin{equation*}
\gamma \leq \ddot{\bm{\Phi}}_{e}(\cdot) \leq \Gamma,
\end{equation*}
with $\gamma$ and $\Gamma$ are constants; and
\item Lipschitz Hessian invertible for all edges $e\in \mathcal{E}$
\begin{equation*}
\left|\frac{1}{\bm{\Phi}_{e}(\bm{x})} - \frac{1}{\bm{\Phi}_{e}(\hat{\bm{x}})}\right| \leq \bm{\delta}\left|\bm{x}-\hat{\bm{x}}\right|.
\end{equation*}
\end{enumerate}
\end{assumption}

The following two lemmas~\cite{c5,c6} quantify essential properties of the dual Hessian which we exploit through our algorithm to determine the approximate Newton direction. 

\begin{lemma}\label{lemma:Crap}
The dual objective $q(\bm{\lambda})=\bm{\lambda}^{\mathsf{T}}(\bm{A}\bm{x}(\bm{\lambda})-b)-\sum_{e}\bm{\Phi}_{e}(\bm{x}(\lambda))$ abides by the following two properties~\cite{Mike}:
\begin{enumerate}
\item The dual Hessian, $\bm{H}(\bm{\lambda})$, is a weighted Laplacian of $\mathcal{G}$:
\begin{equation*}
\bm{H}(\bm{\lambda})=\nabla^{2}q(\bm{\lambda})=\bm{A}\left[\nabla^{2}f(\bm{x}(\bm{\lambda}))\right]^{-1}\bm{A}^{\mathsf{T}}. 
\end{equation*}
\item The dual Hessian $\bm{H}(\bm{\lambda})$ is Lispshitz  continuous with respect to the Laplacian norm (i.e., $||\cdot||_{\mathcal{L}}$) where $\mathcal{L}$ is the unweighted laplacian satisfying $\mathcal{L}=\bm{A}\bm{A}^{\mathsf{T}}$ with $\bm{A}$ being the incidence matrix of $\mathcal{G}$. Namely, $\forall \bm{\lambda}, \bar{\bm{\lambda}}$: 
\begin{equation*}
||\bm{H}(\bar{\bm{\lambda}})-\bm{H}(\bm{\lambda})||_{\mathcal{L}} \leq B ||\bar{\bm{\lambda}}-\bm{\lambda}||_{\mathcal{L}},
\end{equation*}
with $B=\frac{\mu_{n}(\mathcal{L})\bm{\delta}}{\gamma \sqrt{\mu_{2}(\mathcal{L})}}$ where $\mu_{n}(\mathcal{L})$ and $\mu_{2}(\mathcal{L})$ denote the largest and second smallest eigenvalues of the Laplacian $\mathcal{L}$. 
\end{enumerate}
\end{lemma}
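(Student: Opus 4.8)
The plan is to handle the two parts in sequence: first pin down the closed form of the dual Hessian by differentiating the primal map, and then use that form to establish the Lipschitz estimate through a chain of norm conversions.

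For part (1), I would start from the gradient expression $\bm{g}(\bm{\lambda}) = \bm{A}\bm{x}(\bm{\lambda}) - \bm{b}$ recorded above, so that $\bm{H}(\bm{\lambda}) = \nabla^{2}q(\bm{\lambda}) = \bm{A}\,\partial\bm{x}(\bm{\lambda})/\partial\bm{\lambda}$, and the only remaining task is to differentiate the primal map. From the first-order condition~\ref{Eq:MapBack}, $x^{(e)}(\bm{\lambda}) = [\dot{\bm{\Phi}}_{e}]^{-1}\bigl((\bm{A}^{\mathsf{T}}\bm{\lambda})^{(e)}\bigr)$, and the inverse-function theorem gives $\partial x^{(e)}/\partial (\bm{A}^{\mathsf{T}}\bm{\lambda})^{(e)} = 1/\ddot{\bm{\Phi}}_{e}(x^{(e)})$; this is the point at which strict convexity and Assumption~\ref{Ass:Two}(1) guarantee the needed invertibility. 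Collecting the chain rule over all edges yields $\partial\bm{x}/\partial\bm{\lambda} = [\nabla^{2}f(\bm{x}(\bm{\lambda}))]^{-1}\bm{A}^{\mathsf{T}}$, where $\nabla^{2}f = \mathrm{diag}(\ddot{\bm{\Phi}}_{e})$ is diagonal and positive since $\ddot{\bm{\Phi}}_{e}\ge\gamma>0$. Substituting gives $\bm{H}(\bm{\lambda}) = \bm{A}[\nabla^{2}f(\bm{x}(\bm{\lambda}))]^{-1}\bm{A}^{\mathsf{T}}$, and because this equals $\bm{A}\,\mathrm{diag}(1/\ddot{\bm{\Phi}}_{e})\,\bm{A}^{\mathsf{T}}$ with strictly positive edge weights, it is by construction a weighted graph Laplacian of $\mathcal{G}$.

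For part (2), I would factor the difference as a single Laplacian-type expression,
\[
\bm{H}(\bar{\bm{\lambda}}) - \bm{H}(\bm{\lambda}) = \bm{A}\,\bm{\Delta}\,\bm{A}^{\mathsf{T}}, \qquad \bm{\Delta} = \mathrm{diag}\!\left(\tfrac{1}{\ddot{\bm{\Phi}}_{e}(\bar{x}^{(e)})} - \tfrac{1}{\ddot{\bm{\Phi}}_{e}(x^{(e)})}\right),
\]
with $\bar{x}^{(e)} = x^{(e)}(\bar{\bm{\lambda}})$ and $x^{(e)} = x^{(e)}(\bm{\lambda})$. Two Lipschitz estimates control $\bm{\Delta}$: Assumption~\ref{Ass:Two}(2) gives $|\bm{\Delta}_{ee}| \le \bm{\delta}\,|\bar{x}^{(e)} - x^{(e)}|$, while $\ddot{\bm{\Phi}}_{e}\ge\gamma$ makes $[\dot{\bm{\Phi}}_{e}]^{-1}$ a $\tfrac{1}{\gamma}$-Lipschitz map, so that with $\bm{z}=\bar{\bm{\lambda}}-\bm{\lambda}$ one obtains $|\bar{x}^{(e)}-x^{(e)}| \le \tfrac{1}{\gamma}\,|(\bm{A}^{\mathsf{T}}\bm{z})^{(e)}|$. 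Combining these and using $|(\bm{A}^{\mathsf{T}}\bm{z})^{(e)}|=|z^{(i)}-z^{(j)}|\le\|\bm{z}\|_{\mathcal{L}}$ edgewise yields $\|\bm{\Delta}\|_{2}=\max_{e}|\bm{\Delta}_{ee}| \le \tfrac{\bm{\delta}}{\gamma}\,\|\bm{A}^{\mathsf{T}}\bm{z}\|_{\infty}\le\tfrac{\bm{\delta}}{\gamma}\,\|\bm{z}\|_{\mathcal{L}}$.

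The remaining work converts between the Euclidean and Laplacian norms, and this is where the constant $B$ is assembled. The facts I would use are the spectral sandwich $\sqrt{\mu_{2}(\mathcal{L})}\,\|\bm{w}\|_{2}\le\|\bm{w}\|_{\mathcal{L}}\le\sqrt{\mu_{n}(\mathcal{L})}\,\|\bm{w}\|_{2}$ on $\bm{1}^{\perp}$, together with $\|\bm{A}\|_{2}=\sqrt{\mu_{n}(\mathcal{L})}$. Expanding the induced norm $\|\bm{H}(\bar{\bm{\lambda}})-\bm{H}(\bm{\lambda})\|_{\mathcal{L}}=\sup_{\bm{v}}\|\bm{A}\bm{\Delta}\bm{A}^{\mathsf{T}}\bm{v}\|_{\mathcal{L}}/\|\bm{v}\|_{\mathcal{L}}$, I would bound the numerator by passing to $\ell_{2}$ (factor $\sqrt{\mu_{n}}$), pulling the diagonal perturbation out as $\|\bm{\Delta}\|_{2}$ and using $\|\bm{A}\|_{2}$ on the incidence factors, while bounding the denominator below through $\sqrt{\mu_{2}}\,\|\bm{v}\|_{2}$. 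Chaining these with the bound on $\|\bm{\Delta}\|_{2}$ produces the announced estimate $\|\bm{H}(\bar{\bm{\lambda}})-\bm{H}(\bm{\lambda})\|_{\mathcal{L}}\le B\,\|\bm{z}\|_{\mathcal{L}}$ with $B = \mu_{n}(\mathcal{L})\bm{\delta}/(\gamma\sqrt{\mu_{2}(\mathcal{L})})$; the factor $1/\sqrt{\mu_{2}(\mathcal{L})}$ is precisely the price of converting the test vector's Euclidean norm back into the Laplacian norm via the algebraic connectivity. I expect the main obstacle to be exactly this bookkeeping: the perturbation $\bm{\Delta}$ is coupled to the test vector inside $\bm{A}\bm{\Delta}\bm{A}^{\mathsf{T}}\bm{v}$, so one must cleanly separate the $\bm{z}$-dependence (through $\bm{\Delta}$) from the $\bm{v}$-dependence so that the estimate factors as $\|\bm{z}\|_{\mathcal{L}}\cdot\|\bm{v}\|_{\mathcal{L}}$, and every $\ell_{2}\!\leftrightarrow\!\mathcal{L}$ conversion must be applied on the subspace $\bm{1}^{\perp}$, where $\mathcal{L}$ is invertible and $\mu_{2}(\mathcal{L})$ is the relevant smallest eigenvalue; the precise powers of $\mu_{n}$ and $\mu_{2}$ in $B$ depend on which conversions one routes the argument through.
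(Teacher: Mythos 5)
Your part (1) is correct and is the standard argument: differentiate the primal optimality condition $\dot{\bm{\Phi}}_{e}(x^{(e)}(\bm{\lambda}))=(\bm{A}^{\mathsf{T}}\bm{\lambda})^{(e)}$ to get $\partial\bm{x}/\partial\bm{\lambda}=[\nabla^{2}f(\bm{x}(\bm{\lambda}))]^{-1}\bm{A}^{\mathsf{T}}$ with $\nabla^{2}f=\mathrm{diag}(\ddot{\bm{\Phi}}_{e})$, and note that $\bm{A}\,\mathrm{diag}(1/\ddot{\bm{\Phi}}_{e})\,\bm{A}^{\mathsf{T}}$ is a weighted Laplacian because the weights lie in $[1/\Gamma,1/\gamma]$ and are positive. (The paper defers its own proof to an external appendix, so the comparison below is against the statement itself, not a printed argument.)

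Part (2) has the right skeleton --- the factorization $\bm{H}(\bar{\bm{\lambda}})-\bm{H}(\bm{\lambda})=\bm{A}\bm{\Delta}\bm{A}^{\mathsf{T}}$ and the bound $\|\bm{\Delta}\|_{2}\le\tfrac{\bm{\delta}}{\gamma}\|\bm{A}^{\mathsf{T}}\bm{z}\|_{\infty}\le\tfrac{\bm{\delta}}{\gamma}\|\bm{z}\|_{\mathcal{L}}$ are both fine --- but the norm chaining you describe does \emph{not} ``produce the announced estimate''; it produces a strictly weaker one. Carrying out your own steps: the numerator bound is $\|\bm{A}\bm{\Delta}\bm{A}^{\mathsf{T}}\bm{v}\|_{\mathcal{L}}\le\sqrt{\mu_{n}}\,\|\bm{A}\|_{2}\|\bm{\Delta}\|_{2}\|\bm{A}^{\mathsf{T}}\|_{2}\|\bm{v}\|_{2}=\mu_{n}^{3/2}\|\bm{\Delta}\|_{2}\|\bm{v}\|_{2}$, the denominator bound is $\|\bm{v}\|_{\mathcal{L}}\ge\sqrt{\mu_{2}}\|\bm{v}\|_{2}$, and the quotient gives $\|\bm{H}(\bar{\bm{\lambda}})-\bm{H}(\bm{\lambda})\|_{\mathcal{L}}\le\bigl(\mu_{n}^{3/2}\bm{\delta}/(\gamma\sqrt{\mu_{2}})\bigr)\|\bm{z}\|_{\mathcal{L}}=\sqrt{\mu_{n}}\,B\,\|\bm{z}\|_{\mathcal{L}}$. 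Since $\mu_{n}(\mathcal{L})\ge 2$ for any graph with at least one edge, this constant strictly exceeds the claimed $B$, so the lemma as stated is not proved; your closing caveat that ``the precise powers of $\mu_{n}$ and $\mu_{2}$ depend on which conversions one routes the argument through'' is exactly the unresolved gap, not a harmless remark.

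The slack comes from decoupling $\bm{A}^{\mathsf{T}}\bm{v}$ from $\bm{v}$. Because $\mathcal{L}=\bm{A}\bm{A}^{\mathsf{T}}$, you have the identity $\|\bm{A}^{\mathsf{T}}\bm{v}\|_{2}=\|\bm{v}\|_{\mathcal{L}}$ --- the same identity you already exploited for $\bm{z}$ --- so the sharp chain is $\|\bm{A}\bm{\Delta}\bm{A}^{\mathsf{T}}\bm{v}\|_{\mathcal{L}}\le\sqrt{\mu_{n}}\,\|\bm{A}\|_{2}\|\bm{\Delta}\|_{2}\|\bm{A}^{\mathsf{T}}\bm{v}\|_{2}=\mu_{n}\|\bm{\Delta}\|_{2}\|\bm{v}\|_{\mathcal{L}}$: no Euclidean conversion of the test vector is ever needed, and one factor of $\sqrt{\mu_{n}}$ disappears. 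This yields $\|\bm{H}(\bar{\bm{\lambda}})-\bm{H}(\bm{\lambda})\|_{\mathcal{L}}\le\mu_{n}\|\bm{\Delta}\|_{2}$, after which the only question is how $\|\bm{\Delta}\|_{2}$ is charged to the increment: your edgewise route gives the constant $\mu_{n}\bm{\delta}/\gamma$, whereas the stated $B$ carries the factor $1/\sqrt{\mu_{2}}$ that arises when the increment is instead converted through its Euclidean norm, $\|\bm{z}\|_{2}\le\|\bm{z}\|_{\mathcal{L}}/\sqrt{\mu_{2}(\mathcal{L})}$ --- a step valid only for $\bm{z}\perp\bm{1}$, which you may assume without loss of generality since $q$, $\bm{H}$, and $\|\cdot\|_{\mathcal{L}}$ are all invariant under shifts of $\bm{\lambda}$ along $\bm{1}$. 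In short: the decomposition and the Lipschitz bounds on $\bm{\Delta}$ are right, but the final bookkeeping must use $\|\bm{A}^{\mathsf{T}}\bm{v}\|_{2}=\|\bm{v}\|_{\mathcal{L}}$; as written, your chain only establishes the weaker constant $\sqrt{\mu_{n}}B$.
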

\begin{proof}
See Appendix.
\end{proof}
The following lemma follows from the above and is needed in the analysis later: 
\begin{lemma}\label{lemma:B}
If the dual Hessian $\bm{H}(\bm{\lambda})$ is Lipschitz continuous with respect to the Laplacian norm $||\cdot||_{\mathcal{L}}$ (i.e., Lemma~\ref{lemma:Crap}), then for any $\bm{\lambda}$ and $\hat{\bm{\lambda}}$ we have 
\begin{equation*}
||\nabla q(\hat{\bm{\lambda}})-\nabla q({\bm{\lambda}}) - \bm{H}(\bm{\lambda})(\hat{\lambda}-\bm{\lambda})||_{\mathcal{L}} \leq \frac{B}{2}||\hat{\bm{\lambda}} - \bm{\lambda}||_{\mathcal{L}}^{2}.
\end{equation*}
\end{lemma}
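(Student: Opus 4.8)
The plan is to recognize the left-hand side as the remainder in a first-order Taylor expansion of $\nabla q$ and to control it through the integral form of the remainder, feeding the Lipschitz bound of Lemma~\ref{lemma:Crap} into the integrand. First I would fix $\bm{\lambda}, \hat{\bm{\lambda}}$ and apply the fundamental theorem of calculus to the curve $t \mapsto \nabla q(\bm{\lambda} + t(\hat{\bm{\lambda}} - \bm{\lambda}))$ on $[0,1]$, whose derivative is $\bm{H}(\bm{\lambda} + t(\hat{\bm{\lambda}} - \bm{\lambda}))(\hat{\bm{\lambda}} - \bm{\lambda})$ since $\bm{H} = \nabla^2 q$. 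This yields
\begin{equation*}
\nabla q(\hat{\bm{\lambda}}) - \nabla q(\bm{\lambda}) = \int_0^1 \bm{H}\big(\bm{\lambda} + t(\hat{\bm{\lambda}} - \bm{\lambda})\big)(\hat{\bm{\lambda}} - \bm{\lambda})\, dt.
\end{equation*}
Writing $\bm{H}(\bm{\lambda})(\hat{\bm{\lambda}} - \bm{\lambda}) = \int_0^1 \bm{H}(\bm{\lambda})(\hat{\bm{\lambda}} - \bm{\lambda})\, dt$ and subtracting, the quantity inside the target norm becomes $\int_0^1 \big[\bm{H}(\bm{\lambda} + t(\hat{\bm{\lambda}} - \bm{\lambda})) - \bm{H}(\bm{\lambda})\big](\hat{\bm{\lambda}} - \bm{\lambda})\, dt$.

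Next I would pass the norm $||\cdot||_{\mathcal{L}}$ through the integral via the triangle inequality (in its integral form), reducing the problem to bounding $||[\bm{H}(\bm{\lambda} + t(\hat{\bm{\lambda}} - \bm{\lambda})) - \bm{H}(\bm{\lambda})](\hat{\bm{\lambda}} - \bm{\lambda})||_{\mathcal{L}}$ pointwise in $t$. Here the matrix norm $||\cdot||_{\mathcal{L}}$ in Lemma~\ref{lemma:Crap} is the operator norm induced by the vector seminorm $||\cdot||_{\mathcal{L}}$, so the consistency inequality $||\bm{X}\bm{v}||_{\mathcal{L}} \le ||\bm{X}||_{\mathcal{L}}\,||\bm{v}||_{\mathcal{L}}$ applies with $\bm{X} = \bm{H}(\bm{\lambda} + t(\hat{\bm{\lambda}} - \bm{\lambda})) - \bm{H}(\bm{\lambda})$ and $\bm{v} = \hat{\bm{\lambda}} - \bm{\lambda}$. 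Invoking the Lipschitz property with argument difference $t(\hat{\bm{\lambda}} - \bm{\lambda})$ gives $||\bm{X}||_{\mathcal{L}} \le B\, t\, ||\hat{\bm{\lambda}} - \bm{\lambda}||_{\mathcal{L}}$, so the integrand is at most $B\, t\, ||\hat{\bm{\lambda}} - \bm{\lambda}||_{\mathcal{L}}^2$. Integrating $\int_0^1 B\,t\,dt = \tfrac{B}{2}$ then produces exactly $\tfrac{B}{2}||\hat{\bm{\lambda}} - \bm{\lambda}||_{\mathcal{L}}^2$, closing the estimate.

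The main obstacle I anticipate is not the calculus but the care needed because $||\cdot||_{\mathcal{L}}$ is only a \emph{seminorm}: $\mathcal{L} = \bm{A}\bm{A}^{\mathsf{T}}$ is positive semidefinite with null space spanned by $\bm{1}$. I would therefore verify that the operator-norm consistency inequality and the integral triangle inequality remain valid for this seminorm, which they do once one works on the quotient by $\mathrm{span}(\bm{1})$, and that the vectors involved stay in the relevant subspace. Concretely, since $\nabla q(\bm{\lambda}) = \bm{A}\bm{x}(\bm{\lambda}) - \bm{b}$ depends on $\bm{\lambda}$ only through the edge differences $\lambda^{(i)} - \lambda^{(j)}$ (Equation~\ref{Eq:MapBack}), both $\nabla q(\hat{\bm{\lambda}}) - \nabla q(\bm{\lambda})$ and $\bm{H}(\bm{\lambda})(\hat{\bm{\lambda}} - \bm{\lambda})$ are invariant under shifting $\hat{\bm{\lambda}} - \bm{\lambda}$ by a multiple of $\bm{1}$; in particular when $\hat{\bm{\lambda}} - \bm{\lambda} \in \mathrm{span}(\bm{1})$ both sides vanish, so the inequality is consistent on the degenerate directions of the seminorm. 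Confirming these subspace and consistency points is the only delicate part; the remainder of the argument is the routine integral-remainder computation sketched above.
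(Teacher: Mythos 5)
Your proof is correct and follows the standard route the paper relies on: the fundamental theorem of calculus applied to $t \mapsto \nabla q(\bm{\lambda} + t(\hat{\bm{\lambda}}-\bm{\lambda}))$, the integral triangle inequality, operator-seminorm consistency, and the Lipschitz bound of Lemma~\ref{lemma:Crap} integrated against $t$ to produce the factor $\sfrac{B}{2}$. Your extra care about $||\cdot||_{\mathcal{L}}$ being only a seminorm (handled via the quotient by $\mathrm{span}(\bm{1})$, using that $\bm{H}(\bm{\lambda})\bm{1}=\bm{0}$ and that $\nabla q$ is shift-invariant) is a valid and welcome refinement of the same argument, not a different approach.
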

\begin{proof}
See Appendix. 
\end{proof}

As detailed in~\cite{c6}, the exact computation of the inverse of the Hessian needed for determining the Newton direction can not be attained exactly in a distributed fashion. Authors in~\cite{c5,c6} proposed approximation techniques for computing this direction. The effectiveness of these algorithms, however, highly depend on the accuracy of such an approximation. In this work, we propose a distributed approximator for the Newton direction capable of acquiring $\epsilon$-close solutions for any arbitrary $\epsilon$. Our results show that this new algorithm is capable of significantly surpassing others in literature where its performance accurately traces that of the standard centralized Newton approach. Next, we detail our distributed SDD solver being at the core of our approximator.  

\section{SDD DISTRIBUTED SOLVERS}\label{Sec:SDDM}
We propose a distributed solver for SDDM systems which can be used to determine an approximation to the Newton direction up to any arbitrary $\epsilon > 0$ (see Section~\ref{Sec:NewtonApprox}). Our method is based on a distributed implementation of the parallel solver of Section~\ref{Parallel:SDDM}. Similar to~\cite{c11}, we first introduce an approximate inverse chain which can be computed in a distributed fashion. This leads us to a distributed version of the ``crude'' solver (i.e., Algorithm~\ref{Algo:CrudeParallel}). Contrary to~\cite{c11}, however, we then generalize the ``crude'' distributed solver to acquire \emph{exact} solutions to an SDDM system. For a generic SDDM system of linear equations, our main results for determining an $\epsilon$-close solution (i.e., $||\tilde{\bm{x}}-\bm{x}^{*}||_{\bm{M}_{0}} \leq \epsilon ||\bm{x}^{*}||_{\bm{M}_{0}}$) is summarized by\footnote{The complete proofs can be found at https://db.tt/MbBW15Zx}: 

\begin{lemma}\label{lemma:SDDM}
For the system of equations represented by $\bm{M}_{0}\bm{x}=\bm{b}$, there is a distributed algorithm that uses only R-Hop information and computes the $\epsilon$-close solution, $\tilde{\bm{x}}$, in $T(n,\epsilon)=\mathcal{O}\left(\left(\frac{\beta\kappa(\bm{M}_{0})}{R}+\beta d_{\text{max}}R\right)\log\left(\frac{1}{\epsilon}\right)\right)$ time steps, with $\kappa(\bm{M}_{0})$ being the condition number of $\bm{M}_{0}$, $\beta=\min\left\{n,\frac{d_{\text{max}}^{R-1}-1}{d_{\text{max}}-1}\right\}$ representing the upper bound on the size of the R-Hop neighborhood, $d_{\text{max}}$ the maximal degree of $\mathcal{G}$, and $\epsilon \in (0,\frac{1}{2}]$ being the precision parameter. 
\end{lemma}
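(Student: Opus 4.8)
The plan is to realize the two-stage Spielman--Peng scheme (the crude inverse operator of Algorithm~\ref{Algo:Inv}, followed by preconditioned Richardson in Algorithm~\ref{Algo:Inv2}) entirely with R-hop communication, and then to count the number of single-hop time steps this costs. The accuracy of the final iterate comes essentially for free once the algorithmic skeleton is in place: by Lemma~\ref{Rude_Alg_guarantee_Lemma} the crude operator satisfies $\bm{Z}_0\approx_{\sum_i\epsilon_i}\bm{M}_0^{-1}$, by Lemma~\ref{lemma_approx_matrix_inverse} this already yields an $\mathcal{O}(\sqrt{\sum_i\epsilon_i})$-approximate solution, and by Lemma~\ref{Exact_Alg_guarantee_lemma} the Richardson loop upgrades this to an $\epsilon$-close solution in $q=\mathcal{O}(\log\frac1\epsilon)$ outer iterations, \emph{provided} the chain is built with $\sum_i\epsilon_i<\frac13\ln2$. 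So the whole statement reduces to (i) exhibiting a valid distributed inverse approximated chain and (ii) bounding the per-iteration communication cost.

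\textbf{Step 1 (chain construction).} Following~\cite{c11}, I would take a chain of length $d=\mathcal{O}(\log\kappa(\bm{M}_0))$, with $\bm{M}_i=\bm{D}_i-\bm{A}_i$ approximating $\bm{D}_{i-1}-\bm{A}_{i-1}\bm{D}_{i-1}^{-1}\bm{A}_{i-1}$ and tolerances chosen so that $\sum_{i=0}^d\epsilon_i<\frac13\ln2$ (each $\epsilon_i=\mathcal{O}(1/\log\kappa)$ suffices). The crucial structural observation is that the squaring map $\bm{A}\mapsto\bm{A}\bm{D}^{-1}\bm{A}$ \emph{doubles the combinatorial reach}: since $\bm{A}_0$ is $1$-hop sparse, $\bm{A}_i$ has a sparsity pattern corresponding to a $2^i$-hop neighborhood. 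Lemma~\ref{SDDM_splitting_lemma} guarantees each $\bm{M}_i$ stays SDDM so the chain is well defined, and the Loewner-order facts of Lemma~\ref{approx_lemma_facts} are what let local approximations compose into the global $\sum_i\epsilon_i$ bound required by Lemma~\ref{Exact_Alg_guarantee_lemma}. The length $d=\mathcal{O}(\log\kappa)$ is exactly where condition (3), $\bm{D}_d\approx_{\epsilon_d}\bm{D}_d-\bm{A}_d$, becomes satisfiable, and it forces the deepest reach to be $2^d=\mathcal{O}(\kappa)$ hops, which is where the $\kappa$ factor ultimately originates.

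\textbf{Step 2 (distributed cost of one crude solve).} Every line of Algorithm~\ref{Algo:Inv} is local except the products with $\bm{A}_{i-1}\bm{D}_{i-1}^{-1}$ and $\bm{D}_i^{-1}\bm{A}_i$. Multiplying by $\bm{A}_i$ requires each node to collect vector entries from its $2^i$-hop neighborhood, either by applying the $1$-hop factors $2^i$ times or by gathering the precomputed $2^i$-hop support. Here the R-hop budget enters through a split of the chain into a \emph{shallow} regime $2^i\le R$, where one R-hop gather (costing $\mathcal{O}(\beta d_{\max}R)$ time steps to move the $\le\beta$ neighborhood entries across $R$ hops of degree $\le d_{\max}$) lets every node finish those levels locally, and a \emph{deep} regime $2^i>R$, where the $i$-th product needs $\lceil 2^i/R\rceil$ R-hop super-rounds. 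Summing the geometric series over the deep levels gives $\sum_{2^i>R}\beta\,2^i/R=\mathcal{O}(\beta\kappa/R)$, while the shallow regime contributes $\mathcal{O}(\beta d_{\max}R)$, so one crude solve costs $\mathcal{O}(\beta\kappa/R+\beta d_{\max}R)$; the forward and backward sweeps change only constants. Multiplying by the $q=\mathcal{O}(\log\frac1\epsilon)$ Richardson iterations (each an additional $1$-hop product $\bm{M}_0\bm{y}_{k-1}$, which is cheaper) yields the claimed $T(n,\epsilon)$.

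\textbf{Main obstacle.} The delicate part is Step~2, namely the collision between the \emph{doubling sparsity} of the chain and the fixed R-hop locality. I expect the real work to be verifying that the $2^i$-hop products can be carried out with R-hop primitives \emph{without} losing accuracy --- that batching $R$ hops at a time reproduces the exact action of $\bm{A}_i$ rather than a truncation of it --- and bounding the message volume so that each super-round is genuinely $\mathcal{O}(\beta d_{\max}R)$ and not worse under congestion. Getting the powers of $\beta$ and $d_{\max}$ and the split point $2^i\approx R$ right is precisely what produces the two competing terms $\beta\kappa/R$ and $\beta d_{\max}R$, whose balance optimizes $R$; everything else follows from the cited lemmas. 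I would therefore concentrate on making the R-hop gather/scatter primitive and its cost rigorous, and on checking that the chain's $\approx_{\epsilon_i}$ guarantees survive the distributed implementation.
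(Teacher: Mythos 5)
Your proposal is correct and follows essentially the same route as the paper: the ``crude'' R-hop solver (Algorithm~\ref{Algo:DistRHop}, Lemma~\ref{r_hop_rude_lemma}) realizes exactly your squaring chain with the shallow/deep split at $2^{i}\approx R$ and precomputed R-hop operators $\bm{C}_{0}=(\bm{A}_0\bm{D}_0^{-1})^{R}$, $\bm{C}_{1}=(\bm{D}_0^{-1}\bm{A}_0)^{R}$; the distributed Richardson wrapper (Algorithm~\ref{Algo:EDistR}, Lemmas~\ref{Dist_Exact_algorithm_guarantee_lemma} and~\ref{time_complexity_of_distresolve}) supplies the $\mathcal{O}\left(\log\frac{1}{\epsilon}\right)$ factor; and Lemma~\ref{eps_d_lemma} pins $d=\mathcal{O}(\log\kappa)$, hence $2^{d}=\mathcal{O}(\kappa)$, just as you argue. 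The only cosmetic differences are that the paper's chain is exact at every intermediate level (so no per-level tolerances $\epsilon_i=\mathcal{O}(1/\log\kappa)$ are needed --- only the terminal condition $\epsilon_d<\frac{1}{3}\ln 2$ enters), which also resolves your ``main obstacle'' since batched R-hop products of powers of $\bm{A}_0\bm{D}_0^{-1}$ reproduce the action of $\bm{A}_i$ with no truncation, and that the $\beta d_{\max}R$ term arises from precomputing $\bm{C}_0,\bm{C}_1$ rather than from per-level gathers.
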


Analogous to~\cite{c11}, we will develop and analyze two distributed solvers for SDDM systems (i.e., ``crude'' R-Hop solver and ``exact'' R-Hop solver) leading to the proof of the above lemma. 

\subsection{``Crude'' R-Hop SDDM Solver}
Algorithm~\ref{Algo:DistRHop} presents the ``crude'' R-Hop solver for SDDM systems. Each node receives the $k^{th}$ row of $\bm{M}_{0}$ , $k^{th}$ component, $[\bm{b}_{0}]_{k}$ of $\bm{b}_{0}$, the length of the inverse chain, $d$, and the local communication bound\footnote{For simplicity, $R$ is assumed to be in the order of powers of 2, i.e., $R=2^{\rho}$.}  $R$ as inputs, and outputs the $k^{th}$ component of the ``rude'' approximation of $\bm{x}^{\star}$. 

\begin{algorithm}[h!]
  \caption{$\text{RDistRSolve}\left(\{[\bm{M}_0]_{k1},\ldots, [\bm{M}_0]_{kn}\}, [\bm{b}_0]_k, d, R\right)$}
  \label{Algo:DistRHop}
  \begin{algorithmic}
	\STATE \textbf{Part One:}	
	\STATE $\{[\bm{A}_0\bm{D}^{-1}_0]_{k1},\ldots,[\bm{A}_0\bm{D}^{-1}_0]_{kn}\} = \left\{\frac{[\bm{A}_0]_{k1}}{[\bm{D}_0]_{11}},\ldots,\frac{[\bm{A}_0]_{kn}}{[\bm{D}_0]_{nn}}\right\}$, $\{[\bm{D}^{-1}_0\bm{A}_0]_{k1},\ldots,[\bm{D}^{-1}_0\bm{A}_0]_{kn}\} = \{\frac{[\bm{A}_0]_{k1}}{[\bm{D}_0]_{kk}},\ldots,\frac{[\bm{A}_0]_{kn}}{[\bm{D}_0]_{kk}}\}$
	\STATE $[\bm{C}_0]_{k1},\ldots,[\bm{C}_0]_{kn} = \text{Comp}_0\left([\bm{M}_0]_{k1},\ldots, [\bm{M}_0]_{kn}, R\right)$, $[\bm{C}_1]_{k1},\ldots,[\bm{C}_1]_{kn}= \text{Comp}_1\left([\bm{M}_0]_{k1},\ldots, [\bm{M}_0]_{kn}, R\right)$
\\\hrulefill	
	\STATE \textbf{Part Two:}
	\FOR {$i=1$ to $d$}
		\STATE \textbf{if} \ {$i-1 < \rho$}
			\STATE $[\bm{u}^{(i-1)}_{1}]_k = [\bm{A}_0\bm{D}^{-1}_0\bm{b}_{i-1}]_k$
			\FOR {$j=2$ to $2^{i-1}$}
				\STATE $[\bm{u}^{(i-1)}_{j}]_k = [\bm{A}_0\bm{D}^{-1}_0\bm{u}^{(i-1)}_{j-1}]_k$
			\ENDFOR 
			\STATE $[\bm{b}_i]_k = [\bm{b}_{i-1}]_k + [\bm{u}^{(i-1)}_{2^{i-1}}]_k$
		\STATE \textbf{if} {$i-1 \ge \rho$}
			\STATE $l_{i-1} = \sfrac{2^{i-1}}{R}$
			\STATE $[\bm{u}^{(i-1)}_{1}]_k = [\bm{C}_0\bm{b}_{i-1}]_k$
			\FOR {$j=2$ to $l_{i-1}$}
				\STATE $[\bm{u}^{(i-1)}_{j}]_k = [\bm{C}_0\bm{u}^{(i-1)}_{j-1}]_k$
			\ENDFOR 
			\STATE $[\bm{b}_i]_k = [\bm{b}_{i-1}]_k + [\bm{u}^{(i-1)}_{l_{i-1}}]_k$
	\ENDFOR 
		\\\hrulefill
	\STATE \textbf{Part Three:}
	\STATE $[\bm{x}_d]_k = \sfrac{[\bm{b}_d]_k}{[\bm{D}_0]_{kk}}$
	\FOR {$i=d-1$ to $1$}
		\STATE \textbf{if} {$i < \rho$}
			\STATE $[\bm{\eta}^{(i+1)}_{1}]_k = [\bm{D}^{-1}_0\bm{A}_0\bm{x}_{i+1}]_k$
			\FOR {$j=2$ to $2^{i}$}
				\STATE $[\bm{\eta}^{(i+1)}_{j}]_k = [\bm{D}^{-1}_0\bm{A}_0\bm{\eta}^{(i+1)}_{j-1}]_k$
			\ENDFOR 
			\STATE $[\bm{x}_i]_k = \frac{1}{2}\left[\frac{[\bm{b}_i]_k}{[\bm{D}_{0}]_{kk}} + [\bm{x}_{i+1}]_k + [\bm{\eta}^{i + 1}_{2^i}]_{k}\right]$
		\STATE \textbf{if} {$i \ge \rho$}
			\STATE $l_i = \sfrac{2^i}{R}$
			\STATE $[\bm{\eta}^{(i+1)}_{1}]_k = [\bm{C}_1\bm{x}_{i+1}]_k$
			\FOR {$j=2$ to $l_i$}
				\STATE $[\bm{\eta}^{(i+1)}_{j}]_k = [\bm{C}_1\bm{\eta}^{(i+1)}_{j-1}]_k$
			\ENDFOR 
			\STATE $[\bm{x}_i]_k = \frac{1}{2}\left[\frac{[\bm{b}_i]_k}{[\bm{D}_0]_{kk}}  + [\bm{x}_{i+1}]_k + [\bm{\eta}^{i+1}_{l_i}]_k \right]$
	\ENDFOR 
	\STATE $[\bm{x}_0]_k = \frac{1}{2}\left[\frac{[\bm{b}_0]_k}{[\bm{D}_0]_{kk}} + [\bm{x}_1]_k + [\bm{D}^{-1}_0\bm{A}_0\bm{x}_1]_k \right]$
    \STATE \textbf{return} $[\bm{x}_0]_k$ 
  \end{algorithmic}
\end{algorithm}
\begin{algorithm}[h!]
  \caption{$\text{Comp}_0\left([\bm{M}_0]_{k1},\ldots, [\bm{M}_0]_{kn}, R\right)$}
  \label{Alg_4}
  \begin{algorithmic}
	\FOR {$l = 1$ to $R - 1$}
		\FOR {$j$ s.t.$\bm{v}_j\in \mathbb{N}_{l+1}(\bm{v}_k)$}
			\STATE
			$\left[(\bm{A}_0\bm{D}^{-1}_0)^{l+1}\right]_{kj} = \sum\limits_{r:\bm{v}_r\in \mathbb{N}_1(v_j)}\frac{[\bm{D}_0]_{rr}}{[\bm{D}_0]_{jj}}[(\bm{A}_0\bm{D}^{-1}_0)^l]_{kr}[\bm{A}_0\bm{D}^{-1}_0]_{jr}$
		\ENDFOR
	\ENDFOR
	\STATE \textbf{return} $\bm{c}_0 = \{[(\bm{A}_0\bm{D}^{-1}_0)^{R}]_{k1},\ldots,[(\bm{A}_0\bm{D}^{-1}_0)^{R}]_{kn} \}$
  \end{algorithmic}
\end{algorithm}
\begin{algorithm}
  \caption{$\text{Comp}_1([\bm{M}_0]_{k1},\ldots, [\bm{M}_0]_{kn}, R)$}
  \label{Alg_5}
  \begin{algorithmic}
	\FOR {$l = 1$ to $R - 1$}
		\FOR {$j$ s.t.$\bm{v}_j\in \mathbb{N}_{l+1}(\bm{v}_k)$}
			\STATE$
			\left[(\bm{D}^{-1}_0\bm{A}_0)^{l+1}\right]_{kj} = \sum\limits_{r:\bm{v}_r\in \mathbb{N}_1(\bm{v}_j)}\frac{[\bm{D}_0]_{jj}}{[\bm{D}_0]_{rr}}[(\bm{D}^{-1}_0\bm{A}_0)^l]_{kr}[\bm{D}^{-1}_0\bm{A}_0]_{jr}$
		\ENDFOR
	\ENDFOR
	\STATE \textbf{return} $\bm{c}_1 = \{[(\bm{D}^{-1}_0\bm{A}_0)^{R}]_{k1},\ldots,[(\bm{D}^{-1}_0\bm{A}_0)^{R}]_{kn} \}$
  \end{algorithmic}
\end{algorithm}

\textbf{Analysis of Algorithm~\ref{Algo:DistRHop}} The following Lemma shows that $\text{RDistRSolve}$ computes the $k^{th}$ component of the ``crude'' approximation of $\bm{x}^{\star}$ and provides the algorithm's time complexity 
 
\begin{lemma}\label{r_hop_rude_lemma}
Let $\bm{M}_0 = \bm{D}_0 - \bm{A}_0$ be the standard splitting and let $\bm{Z}^{\prime}_0$ be the operator defined by $\text{RDistRSolve}$, namely, $\bm{x}_0 = \bm{Z}^{\prime}_0\bm{b}_0$. Then,
$\bm{Z}^{\prime}_0\approx_{\epsilon_d} \bm{M}^{-1}_0$.  
$\text{RDistRSolve}$ requires $\mathcal{O}\left(\frac{2^d}{R}\beta + \beta Rd_{max}\right)$, where $\beta = \min\left\{n, \frac{\left(d^{R+1}_{\text{max}} - 1\right)}{\left(d_{\text{max}} - 1\right)}\right\}$, to arrive at $\bm{x}_{0}$. 
\end{lemma}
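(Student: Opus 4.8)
The plan is to reduce the claim to the already-established guarantee for the parallel solver (Lemma~\ref{Rude_Alg_guarantee_Lemma}) by exhibiting the specific inverse approximated chain that $\text{RDistRSolve}$ implicitly uses, and then verifying that every local operation reproduces the corresponding centralized operation of $\text{ParallelRSolve}$ (Algorithm~\ref{Algo:Inv}). First I would fix the chain $\mathcal{C}=\{\bm{M}_0,\ldots,\bm{M}_d\}$ defined by $\bm{D}_i=\bm{D}_0$ and $\bm{A}_i=\bm{A}_{i-1}\bm{D}_0^{-1}\bm{A}_{i-1}$ for $i\ge 1$, and show by induction, using the closed form $\bm{A}_i=(\bm{A}_0\bm{D}_0^{-1})^{2^i}\bm{D}_0$, that each $\bm{A}_i$ is non-negative and symmetric and, via Lemma~\ref{SDDM_splitting_lemma}, that each $\bm{M}_i=\bm{D}_0-\bm{A}_i$ is SDDM. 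Because $\bm{D}_i=\bm{D}_{i-1}=\bm{D}_0$ and $\bm{D}_i-\bm{A}_i=\bm{D}_{i-1}-\bm{A}_{i-1}\bm{D}_{i-1}^{-1}\bm{A}_{i-1}$ hold with equality, conditions (1) and (2) of Definition~\ref{Def:InvChain} are met with $\epsilon_0=\cdots=\epsilon_{d-1}=0$, so $\epsilon_d$, coming solely from condition (3) $\bm{D}_0\approx_{\epsilon_d}\bm{D}_0-\bm{A}_d$, is the only surviving term. Lemma~\ref{Rude_Alg_guarantee_Lemma} then yields $\bm{Z}'_0\approx_{\sum_{i=0}^{d}\epsilon_i}\bm{M}_0^{-1}$, i.e., $\bm{Z}'_0\approx_{\epsilon_d}\bm{M}_0^{-1}$, which is the first claim, provided $\text{RDistRSolve}$ and $\text{ParallelRSolve}$ truly agree on this chain.

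Establishing that agreement is the core of the correctness argument. The key algebraic identities are $\bm{A}_{i-1}\bm{D}_{i-1}^{-1}=(\bm{A}_0\bm{D}_0^{-1})^{2^{i-1}}$ and $\bm{D}_i^{-1}\bm{A}_i=(\bm{D}_0^{-1}\bm{A}_0)^{2^i}$, which I would substitute into Algorithm~\ref{Algo:Inv}: the forward step becomes $\bm{b}_i=(\bm{I}+(\bm{A}_0\bm{D}_0^{-1})^{2^{i-1}})\bm{b}_{i-1}$ and the backward step acquires the factor $(\bm{D}_0^{-1}\bm{A}_0)^{2^i}$. I would then check that Part Two (resp.\ Part Three) builds exactly these powers: for $i-1<\rho$ by iterating the $1$-hop operator $2^{i-1}$ times, and for $i-1\ge\rho$ by applying the precomputed $\bm{C}_0=(\bm{A}_0\bm{D}_0^{-1})^R$ (resp.\ $\bm{C}_1=(\bm{D}_0^{-1}\bm{A}_0)^R$) exactly $l_{i-1}=2^{i-1}/R$ times, so that $\bm{C}_0^{\,l_{i-1}}=(\bm{A}_0\bm{D}_0^{-1})^{2^{i-1}}$. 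A separate verification is that $\text{Comp}_0$ and $\text{Comp}_1$ assemble these $R$-th powers from $R$-hop data only: here I would use the symmetry of $\bm{A}_0$ to write $[\bm{A}_0\bm{D}_0^{-1}]_{rj}=\tfrac{[\bm{D}_0]_{rr}}{[\bm{D}_0]_{jj}}[\bm{A}_0\bm{D}_0^{-1}]_{jr}$, which is precisely the reweighting in Algorithm~\ref{Alg_4}, and observe that $(\bm{A}_0\bm{D}_0^{-1})^{l+1}$ has the $(l+1)$-hop sparsity pattern, so each node only ever touches entries inside $\mathbb{N}_R(\bm{v}_k)$.

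For the running time I would account for the three parts separately. Part One executes $\text{Comp}_0$ and $\text{Comp}_1$, whose double loops run $R-1$ times over the $\le\beta$ indices $j\in\mathbb{N}_{l+1}(\bm{v}_k)$ with $\le d_{\max}$ summands each, costing $\mathcal{O}(\beta R d_{\max})$. In Parts Two and Three the iterations with $2^{i-1}<R$ apply the $1$-hop operators directly, contributing $\sum_{i=1}^{\rho}2^{i-1}=\mathcal{O}(R)$ applications at total cost $\mathcal{O}(R d_{\max})$ (dominated), whereas the iterations with $2^{i-1}\ge R$ apply $\bm{C}_0,\bm{C}_1$ a total of $\sum_{i>\rho}2^{i-1}/R=\mathcal{O}(2^d/R)$ times, each an aggregation over the $\le\beta$ entries of an $R$-hop row at cost $\mathcal{O}(\beta)$, giving the geometric sum $\tfrac{\beta}{R}\sum_{i=\rho+1}^{d}2^{i-1}=\mathcal{O}(\tfrac{2^d}{R}\beta)$. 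Summing and absorbing $R d_{\max}$ into $\beta R d_{\max}$ yields the stated $\mathcal{O}(\tfrac{2^d}{R}\beta+\beta R d_{\max})$.

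The hard part will be the correctness reduction rather than the chain bookkeeping or the geometric series: one must show that the purely local operations of $\text{RDistRSolve}$ — in particular the $\text{Comp}_0/\text{Comp}_1$ assembly of $R$-th powers from $R$-hop information, together with the diagonal reweighting that exploits the symmetry of $\bm{A}_0$ — reproduce the centralized powers $(\bm{A}_0\bm{D}_0^{-1})^{2^i}$ and $(\bm{D}_0^{-1}\bm{A}_0)^{2^i}$ \emph{exactly}, with no error introduced by locality. Once this exactness is secured, the approximation bound is inherited verbatim from Lemma~\ref{Rude_Alg_guarantee_Lemma}, with only condition (3) of the chain contributing $\epsilon_d$.
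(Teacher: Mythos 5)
Your proposal is correct and takes essentially the same route as the paper: you instantiate the exact power chain $\bm{D}_i=\bm{D}_0$, $\bm{A}_i=(\bm{A}_0\bm{D}_0^{-1})^{2^i}\bm{D}_0$ so that conditions (1)--(2) of Definition~\ref{Def:InvChain} hold with equality and only condition (3) contributes $\epsilon_d$ (precisely the chain appearing in Lemma~\ref{eps_d_lemma}), verify via the identities $\bm{A}_{i-1}\bm{D}_{i-1}^{-1}=(\bm{A}_0\bm{D}_0^{-1})^{2^{i-1}}$, $\bm{D}_i^{-1}\bm{A}_i=(\bm{D}_0^{-1}\bm{A}_0)^{2^i}$, and $\bm{C}_0^{l_{i-1}}=(\bm{A}_0\bm{D}_0^{-1})^{2^{i-1}}$ that $\text{RDistRSolve}$ exactly reproduces $\text{ParallelRSolve}$ on this chain, and then inherit the guarantee from Lemma~\ref{Rude_Alg_guarantee_Lemma}. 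Your runtime accounting, $\mathcal{O}(\beta R d_{\max})$ for $\text{Comp}_0/\text{Comp}_1$ plus the geometric sum $\mathcal{O}\left(\frac{2^d}{R}\beta\right)$ for the $\bm{C}_0,\bm{C}_1$ applications, matches the paper's count as well.
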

\begin{proof}
See Appendix.
\end{proof}

\subsection{``Exact'' Distributed R-Hop SDDM Solver}
Next, we provide the exact R-Hop solver. Similar to $\text{RDistRSolve}$, each node $\bm{v}_k$ receives the $k^{th}$ row $\bm{M}_0$, $[\bm{b}_0]_k$, $d$, $R$, and a precision parameter $\epsilon$ as inputs, and outputs the $k^{th}$ component of the $\epsilon$ close approximation of vector $\bm{x}^{\star}$. 

\begin{algorithm}
  \caption{ \hspace{-.3em} $\text{EDistRSolve}\left(\{[\bm{M}_0]_{k1},\ldots, [\bm{M}_0]_{kn}\}, [\bm{b}_0]_k, d, R,\epsilon\right)$}
\label{Algo:EDistR}
  \begin{algorithmic}
\STATE \textbf{Initialize}: $[\bm{y}_0]_k = 0$, and $[\bm{\chi}]_k = \text{RDistRSolve}(\{[M_0]_{k1},\ldots, [M_0]_{kn}\}, [b_0]_k, d, R)$	
	\FOR {$t=1$ to $q$}
		\STATE  \hspace{-2em} $[\bm{u}^{(1)}_{t}]_k = [\bm{D}_0]_{kk}[\bm{y}_{t-1}]_k - \sum_{j: \bm{v}_j\in \mathbb{N}_{1}(\bm{v}_k)}[\bm{A}_{0}]_{kj}[\bm{y}_{t-1}]_j$
		 \STATE \hspace{-2em} $[\bm{u}^{(2)}_{t}]_k = \text{RDistRSolve}(\{[\bm{M}_0]_{k1},\ldots, [\bm{M}_0]_{kn}\}, [\bm{u}^{(1)}_{t}]_k, d, R)$
		\STATE  \hspace{-2em} $[\bm{y}_t]_k = [\bm{y}_{t-1}]_k - [\bm{u}^{(2)}_{t}]_k + [\bm{\chi}]_k$ 
	\ENDFOR \textbf{end for}	
    \STATE \textbf{return} $[\tilde{\bm{x}}]_k = [\bm{y}_q]_k$
  \end{algorithmic}
\end{algorithm}

\textbf{Analysis of Algorithm~\ref{Algo:EDistR}:} The following Lemma shows that $\text{EDistRSolve}$ computes the $k^{th}$ component of the $\epsilon$ close approximation to $\bm{x}^{\star}$ and provides the time complexity analysis. 

\begin{lemma}\label{Dist_Exact_algorithm_guarantee_lemma}
Let $\bm{M}_0 = \bm{D}_0 - \bm{A}_0$ be the standard splitting. Further, let $\epsilon_d < \sfrac{1}{3}\ln2$. Then Algorithm~\ref{Algo:EDistR} requires $\mathcal{O}\left(\log\frac{1}{\epsilon}\right)$ iterations to return the $k^{th}$ component of the $\epsilon$ close approximation to $\bm{x}^{\star}$.  
\end{lemma}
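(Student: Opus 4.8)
The plan is to reduce the statement to the centralized convergence result of Lemma~\ref{Exact_Alg_guarantee_lemma} by showing that $\text{EDistRSolve}$ (Algorithm~\ref{Algo:EDistR}) is nothing but a node-wise realization of the preconditioned Richardson iteration (Algorithm~\ref{Algo:Inv2}) in which the preconditioner is the R-Hop operator $\bm{Z}^{\prime}_0$ produced by $\text{RDistRSolve}$. First I would verify, line by line, that the three updates executed at each node $\bm{v}_k$ during iteration $t$ assemble exactly the global centralized iterate. The update $[\bm{u}^{(1)}_{t}]_k = [\bm{D}_0]_{kk}[\bm{y}_{t-1}]_k - \sum_{j:\bm{v}_j\in\mathbb{N}_{1}(\bm{v}_k)}[\bm{A}_0]_{kj}[\bm{y}_{t-1}]_j$ computes the $k^{\text{th}}$ entry of $\bm{M}_0\bm{y}_{t-1} = (\bm{D}_0-\bm{A}_0)\bm{y}_{t-1}$ using only $1$-Hop information, since $\bm{M}_0$ inherits the sparsity pattern of $\mathcal{G}$. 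The subsequent call to $\text{RDistRSolve}$ applies the operator $\bm{Z}^{\prime}_0$ to $\bm{u}^{(1)}_{t}$ by Lemma~\ref{r_hop_rude_lemma}, and the final line $[\bm{y}_t]_k = [\bm{y}_{t-1}]_k - [\bm{u}^{(2)}_t]_k + [\bm{\chi}]_k$ matches $\bm{y}_t = \bm{y}_{t-1}-\bm{u}^{(2)}_t+\bm{\chi}$ with $\bm{\chi}=\bm{Z}^{\prime}_0\bm{b}_0$. Hence the distributed iterates coincide with the centralized ones generated by the Richardson scheme with preconditioner $\bm{Z}^{\prime}_0$.

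With the equivalence in hand, I would establish the contraction. Since $\bm{y}_0=0$ and $\bm{b}_0=\bm{M}_0\bm{x}^{\star}$, the update unrolls to the error recurrence
\begin{equation*}
\bm{y}_t - \bm{x}^{\star} = \left(\bm{I}-\bm{Z}^{\prime}_0\bm{M}_0\right)\left(\bm{y}_{t-1}-\bm{x}^{\star}\right),
\end{equation*}
so that $\bm{y}_q-\bm{x}^{\star} = -\left(\bm{I}-\bm{Z}^{\prime}_0\bm{M}_0\right)^{q}\bm{x}^{\star}$. To bound the iteration operator in the $\bm{M}_0$-norm I would conjugate by $\bm{M}_0^{1/2}$: setting $\bm{P}=\bm{M}_0^{1/2}\bm{Z}^{\prime}_0\bm{M}_0^{1/2}$, property (6) of Lemma~\ref{approx_lemma_facts} together with $\bm{Z}^{\prime}_0\approx_{\epsilon_d}\bm{M}^{-1}_0$ (Lemma~\ref{r_hop_rude_lemma}) gives $\bm{P}\approx_{\epsilon_d}\bm{I}$, i.e. every eigenvalue of $\bm{P}$ lies in $[e^{-\epsilon_d},e^{\epsilon_d}]$. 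Since $\left|\left|\bm{I}-\bm{Z}^{\prime}_0\bm{M}_0\right|\right|_{\bm{M}_0}=\left|\left|\bm{I}-\bm{P}\right|\right|_2=\max_i\left|1-\lambda_i(\bm{P})\right|\le e^{\epsilon_d}-1$, the hypothesis $\epsilon_d<\frac{1}{3}\ln2$ yields $e^{\epsilon_d}-1<2^{1/3}-1<1$, a contraction factor bounded away from $1$ by an absolute constant.

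Combining these, $\left|\left|\bm{y}_q-\bm{x}^{\star}\right|\right|_{\bm{M}_0}\le\left(e^{\epsilon_d}-1\right)^{q}\left|\left|\bm{x}^{\star}\right|\right|_{\bm{M}_0}$, so the $\epsilon$-closeness condition $\left|\left|\bm{y}_q-\bm{x}^{\star}\right|\right|_{\bm{M}_0}\le\epsilon\left|\left|\bm{x}^{\star}\right|\right|_{\bm{M}_0}$ holds as soon as $q\ge\log(1/\epsilon)/\log\left(1/(e^{\epsilon_d}-1)\right)$, which is $q=\mathcal{O}\left(\log\frac{1}{\epsilon}\right)$ because the denominator is a positive constant.

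The main obstacle I anticipate is not the contraction estimate, which mirrors the proof of Lemma~\ref{Exact_Alg_guarantee_lemma}, but the equivalence step: one must check carefully that the per-node updates of Algorithm~\ref{Algo:EDistR} reconstruct the exact global Richardson iterate while communicating only within R-Hop neighborhoods, and --- crucially --- that the relevant approximation exponent is $\epsilon_d$ rather than the centralized sum $\sum_{i}\epsilon_i$. This sharper exponent is precisely what Lemma~\ref{r_hop_rude_lemma} provides for $\bm{Z}^{\prime}_0$, and it is what permits the weaker hypothesis $\epsilon_d<\frac{1}{3}\ln2$ in place of $\sum_{i=1}^{d}\epsilon_i<\frac{1}{3}\ln2$.
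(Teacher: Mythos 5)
Your proposal is correct and follows the same route the paper takes: identify the per-node updates of Algorithm~\ref{Algo:EDistR} with the global preconditioned Richardson scheme of Algorithm~\ref{Algo:Inv2}, where the preconditioner is the R-Hop operator $\bm{Z}^{\prime}_0$ with $\bm{Z}^{\prime}_0\approx_{\epsilon_d}\bm{M}^{-1}_0$ from Lemma~\ref{r_hop_rude_lemma}, and then apply the Spielman--Peng contraction argument (the analysis behind Lemma~\ref{Exact_Alg_guarantee_lemma}), which under $\epsilon_d<\frac{1}{3}\ln 2$ yields a constant contraction factor and hence $q=\mathcal{O}\left(\log\frac{1}{\epsilon}\right)$ iterations. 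Your explicit derivation of the error recurrence and the spectral bound $\left|\left|\bm{I}-\bm{P}\right|\right|_2\le e^{\epsilon_d}-1$, including the observation that the distributed chain makes $\epsilon_d$ the only relevant approximation exponent, matches the paper's argument.
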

\begin{proof}
See Appendix.
\end{proof} 
Next, the following Lemma provides the time complexity analysis of $\text{EDistRSolve}$. 

\begin{lemma}\label{time_complexity_of_distresolve}
Let $\bm{M}_0 = \bm{D}_0 - \bm{A}_0$ be the standard splitting and let $\epsilon_d < \sfrac{1}{3}\ln 2 $, then $\text{EDistRSolve}$ requires 
$\mathcal{O}\left(\left(\sfrac{2^d}{R}\beta + \beta Rd_{max}\right)\log\left(\sfrac{1}{\epsilon}\right)\right)$ time steps. Moreover, for each node $\bm{v}_k$, $\text{EDistRSolve}$ only uses information from the R-hop neighbors.
\end{lemma}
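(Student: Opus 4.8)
The plan is to read off the running time of $\text{EDistRSolve}$ (Algorithm~\ref{Algo:EDistR}) directly from its structure, treating it as a single initialization step followed by $q$ iterations of the preconditioned Richardson loop, and to bound each piece with the two results already in hand: Lemma~\ref{Dist_Exact_algorithm_guarantee_lemma}, which fixes the iteration count at $q=\mathcal{O}\!\left(\log\frac{1}{\epsilon}\right)$ whenever $\epsilon_d<\sfrac{1}{3}\ln2$, and Lemma~\ref{r_hop_rude_lemma}, which charges each call to $\text{RDistRSolve}$ a cost of $\mathcal{O}\!\left(\sfrac{2^d}{R}\beta+\beta R d_{\text{max}}\right)$. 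The time bound should then fall out as a product, and the locality claim should follow by composing the locality of the primitive operations.

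First I would account for the per-iteration cost. Inside the loop body there are exactly three operations. The computation of $[\bm{u}^{(1)}_t]_k=[\bm{D}_0]_{kk}[\bm{y}_{t-1}]_k-\sum_{j:\bm{v}_j\in\mathbb{N}_1(\bm{v}_k)}[\bm{A}_0]_{kj}[\bm{y}_{t-1}]_j$ is a single application of $\bm{M}_0=\bm{D}_0-\bm{A}_0$, which, because $\bm{A}_0$ has the sparsity pattern of the $1$-hop graph, touches only the $1$-hop neighbors of $\bm{v}_k$ and costs $\mathcal{O}(d_{\text{max}})$. The step producing $[\bm{u}^{(2)}_t]_k$ is a call to $\text{RDistRSolve}$ and therefore costs $\mathcal{O}\!\left(\sfrac{2^d}{R}\beta+\beta R d_{\text{max}}\right)$ by Lemma~\ref{r_hop_rude_lemma}. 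The final assignment $[\bm{y}_t]_k=[\bm{y}_{t-1}]_k-[\bm{u}^{(2)}_t]_k+[\bm{\chi}]_k$ is a node-local scalar update costing $\mathcal{O}(1)$. Since $\beta,R\ge 1$ give $\beta R d_{\text{max}}\ge d_{\text{max}}$, the $\text{RDistRSolve}$ call dominates, so one iteration costs $\mathcal{O}\!\left(\sfrac{2^d}{R}\beta+\beta R d_{\text{max}}\right)$.

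Next I would sum over the algorithm. The initialization performs one extra $\text{RDistRSolve}$ to form $[\bm{\chi}]_k$, at the same per-call cost, and then the loop runs $q=\mathcal{O}\!\left(\log\frac{1}{\epsilon}\right)$ times by Lemma~\ref{Dist_Exact_algorithm_guarantee_lemma}. Hence the total is $\mathcal{O}\!\left(\sfrac{2^d}{R}\beta+\beta R d_{\text{max}}\right)+\mathcal{O}\!\left(\log\frac{1}{\epsilon}\right)\cdot\mathcal{O}\!\left(\sfrac{2^d}{R}\beta+\beta R d_{\text{max}}\right)=\mathcal{O}\!\left(\left(\sfrac{2^d}{R}\beta+\beta R d_{\text{max}}\right)\log\frac{1}{\epsilon}\right)$, which is the claimed bound.

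For the locality statement I would argue that every primitive round of the algorithm confines its communication to the $R$-hop neighborhood: the two $\text{RDistRSolve}$ invocations are $R$-hop by construction (each node holds only the $R$-hop-supported rows of $\bm{C}_0,\bm{C}_1$ and queries values held at $R$-hop neighbors), the $\bm{M}_0$-product reaches only $1$-hop neighbors $\subseteq\mathbb{N}_R(\bm{v}_k)$, and the Richardson update is purely local; composing these keeps every round within $R$ hops. I expect the genuine subtlety — and the main obstacle — to be precisely this locality bookkeeping: across the $q$ successive rounds information does propagate farther than $R$ hops, so the claim must be read per round, with each node retaining only its own component of the running iterate $\bm{y}_t$ between rounds, so that no single step ever requires data from outside $\mathbb{N}_R(\bm{v}_k)$.
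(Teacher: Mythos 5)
Your proposal is correct and takes essentially the same route as the paper: the time bound is obtained by charging each of the $q=\mathcal{O}\left(\log\frac{1}{\epsilon}\right)$ Richardson iterations (Lemma~\ref{Dist_Exact_algorithm_guarantee_lemma}, valid since $\epsilon_d<\sfrac{1}{3}\ln 2$) the dominant cost $\mathcal{O}\left(\sfrac{2^d}{R}\beta+\beta R d_{\text{max}}\right)$ of its $\text{RDistRSolve}$ call (Lemma~\ref{r_hop_rude_lemma}), with the $\bm{M}_0$-product and the update being $1$-hop and node-local respectively, and the locality claim read per communication round. Your closing observation --- that information does propagate beyond $R$ hops across successive rounds, so the $R$-hop guarantee concerns only the communication needed within any single step --- is the correct interpretation of the lemma and matches the paper's intent.
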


\begin{proof}
See Appendix.
\end{proof}
The complexity of the proposed algorithms depend on the length of the inverse approximated chain, $d$. Here, we provide an analysis to determine the value of $d$ which guarantees $\epsilon_d < \frac{1}{3}\ln2$ in $\mathcal{C} = \{\bm{A}_0,\bm{D}_0,\bm{A}_1, \bm{D}_1,\ldots, \bm{A}_d, \bm{D}_d\}$. These results are summarized the following lemma

\begin{lemma}\label{eps_d_lemma}
Let $\bm{M}_0 = \bm{D}_0 - \bm{A}_0$ be the standard splitting and let $\kappa$ denote the condition number of $\bm{M}_0$. Consider the inverse approximated chain $\mathcal{C} = \{\bm{A}_0,\bm{D}_0,\bm{A}_1, \bm{D}_1,\ldots, \bm{A}_d, \bm{D}_d\}$ with length $d =  \lceil \log \left(2\ln\left(\frac{\sqrt[3]{2}}{\sqrt[3]{2} - 1}\right)\kappa\right)\rceil$, then 
$\bm{D}_0\approx_{\epsilon_d} \bm{D}_0 - \bm{D}_0\left(\bm{D}^{-1}_0\bm{A}_0\right)^{2^d}$, 
with $\epsilon_d < \sfrac{1}{3}\ln2$.
\end{lemma}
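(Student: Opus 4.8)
The plan is to recognize that the stated $\approx_{\epsilon_d}$ relation is precisely condition~(3) in the definition of the (exact, unsparsified) inverse approximated chain, and to bound the resulting $\epsilon_d$. First I would record that when every step of the chain is taken exactly, Lemma~\ref{approx_inverse_formulae_lemma} forces $\bm{D}_i = \bm{D}_0$ and $\bm{A}_i = \bm{A}_{i-1}\bm{D}_0^{-1}\bm{A}_{i-1} = \bm{D}_0(\bm{D}_0^{-1}\bm{A}_0)^{2^i}$, so that $\bm{M}_i = \bm{D}_0 - \bm{D}_0(\bm{D}_0^{-1}\bm{A}_0)^{2^i}$ and conditions~(1)--(2) hold with vanishing tolerances; only condition~(3), the claim, carries a nonzero $\epsilon_d$. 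I would then pass to the symmetrically normalized matrix $\bm{J} = \bm{D}_0^{-1/2}\bm{A}_0\bm{D}_0^{-1/2}$, for which $\bm{D}_0(\bm{D}_0^{-1}\bm{A}_0)^{2^d} = \bm{D}_0^{1/2}\bm{J}^{2^d}\bm{D}_0^{1/2}$, so by Lemma~\ref{approx_lemma_facts}(6) (conjugating by $\bm{D}_0^{\pm 1/2}$) the claim is equivalent to $\bm{I} \approx_{\epsilon_d} \bm{I} - \bm{J}^{2^d}$.

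Next I would do the eigenvalue bookkeeping. Since $\bm{J}^{2^d} = (\bm{J}^2)^{2^{d-1}} \succeq 0$, its spectrum lies in $[0, \rho(\bm{J})^{2^d}]$, whence $\bm{I} - \bm{J}^{2^d}$ has spectrum in $[\,1 - \rho(\bm{J})^{2^d},\,1\,]$. The upper Loewner bound $\bm{I} - \bm{J}^{2^d}\preceq \bm{I}\preceq e^{\epsilon_d}\bm{I}$ is automatic, so the binding constraint is the lower bound, and the smallest admissible tolerance is $\epsilon_d = -\ln\!\big(1 - \rho(\bm{J})^{2^d}\big)$. A short rearrangement shows $\epsilon_d < \tfrac13\ln 2 = \ln\sqrt[3]{2}$ holds iff $\rho(\bm{J})^{2^d} < (\sqrt[3]{2}-1)/\sqrt[3]{2}$; this is the inequality that the choice of $d$ must guarantee.

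The crux is to control $\rho(\bm{J})$ by $\kappa$. Because $\bm{J}$ is entrywise nonnegative, Perron--Frobenius gives $\rho(\bm{J}) = \lambda_{\max}(\bm{J}) = 1 - \lambda_{\min}(\bm{I}-\bm{J})$, and $\bm{I}-\bm{J} = \bm{D}_0^{-1/2}\bm{M}_0\bm{D}_0^{-1/2}$. Writing the Rayleigh quotient with $x = \bm{D}_0^{-1/2}y$ and splitting it as $\frac{x^{\mathsf{T}}\bm{M}_0 x}{x^{\mathsf{T}}x}\cdot\frac{x^{\mathsf{T}}x}{x^{\mathsf{T}}\bm{D}_0 x}$, I obtain $\lambda_{\min}(\bm{I}-\bm{J}) \geq \lambda_{\min}(\bm{M}_0)/\lambda_{\max}(\bm{D}_0)$. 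Using that each diagonal entry of a PSD matrix obeys $[\bm{M}_0]_{ii} = e_i^{\mathsf{T}}\bm{M}_0 e_i \leq \lambda_{\max}(\bm{M}_0)$, hence $\lambda_{\max}(\bm{D}_0) = \max_i[\bm{M}_0]_{ii}\leq \lambda_{\max}(\bm{M}_0)$, yields $\lambda_{\min}(\bm{I}-\bm{J})\geq 1/\kappa$ and therefore $\rho(\bm{J})\leq 1 - 1/\kappa$. This step is the main obstacle, since it is where the condition number of $\bm{M}_0$ itself (rather than of the normalized matrix) must be made to appear; the two-factor Rayleigh estimate together with the PSD diagonal bound are exactly what close that gap.

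Finally I would substitute the prescribed length. From $d = \lceil\log(2\ln(\tfrac{\sqrt[3]{2}}{\sqrt[3]{2}-1})\kappa)\rceil$ we have $2^d \geq 2\ln(\tfrac{\sqrt[3]{2}}{\sqrt[3]{2}-1})\kappa$, so using $1 - 1/\kappa \leq e^{-1/\kappa}$ I get $\rho(\bm{J})^{2^d} \leq e^{-2^d/\kappa} \leq e^{-2\ln(\sqrt[3]{2}/(\sqrt[3]{2}-1))} = \big((\sqrt[3]{2}-1)/\sqrt[3]{2}\big)^2 < (\sqrt[3]{2}-1)/\sqrt[3]{2}$, which is precisely the inequality isolated in the second step. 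Hence $\epsilon_d < \tfrac13\ln 2$, completing the argument. Note that the factor $2$ inside $d$ is what produces the square and thus a comfortable safety margin.
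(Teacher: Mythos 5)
Your proposal is correct and follows essentially the same route as the paper's argument (which, per its appendix, is the standard Spielman--Peng chain-length analysis): reduce by conjugation to $\bm{I}\approx_{\epsilon_d}\bm{I}-\bm{J}^{2^d}$ with $\bm{J}=\bm{D}_0^{-1/2}\bm{A}_0\bm{D}_0^{-1/2}$, bound $\rho(\bm{J})\le 1-1/\kappa$ via the Rayleigh-quotient/diagonal estimate, and use $2^d\ge 2\ln\bigl(\tfrac{\sqrt[3]{2}}{\sqrt[3]{2}-1}\bigr)\kappa$ together with $1-1/\kappa\le e^{-1/\kappa}$ to force $\rho(\bm{J})^{2^d}<(\sqrt[3]{2}-1)/\sqrt[3]{2}$, i.e.\ $\epsilon_d<\tfrac13\ln 2$. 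Your Perron--Frobenius observation (valid here because $\bm{A}_0$ is entrywise nonnegative for an SDDM M-matrix) cleanly handles the negative end of the spectrum that the even power would otherwise require bounding separately; this is a minor streamlining, not a different method.
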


\begin{proof}
See Appendix. 
\end{proof}

Combining the above results finalizes the proof of Lemma~\ref{lemma:SDDM}. The usage of this distributed solver to approximate the Newton direction, as detailed in the next section, enables fast and accurate distributed Newton methods capable of approximating centralized Newton directions up to any arbitrary $\epsilon$. 

\section{FAST \& ACCURATE DISTRIBUTED NEWTON METHOD}\label{Sec:NewtonApprox}
Our approach only requires R-Hop communication for the distributed approximation of the Newton direction. Given the results of Lemma~\ref{lemma:Crap}, we can determine the approximate Newton direction by solving a system of linear equations represented by an SDD matrix\footnote{Due to space constraints, we refrain some of the proofs to the appendix.} according to Section~\ref{Sec:SDDM}, with $\bm{M}_{0}=\bm{H}_{k}=\bm{H}(\bm{\lambda}_{k})$.  

Formally, we consider the following iteration scheme:
\begin{equation}
\bm{\lambda}_{k+1}=\bm{\lambda}_{k} + \alpha_{k}\tilde{\bm{d}}_{k},
\end{equation}
with $k$ representing the iteration number, $\alpha_{k}$ the step-size, and $\tilde{\bm{d}}_{k}$ denoting the approximate Newton direction. We determine $\tilde{\bm{d}}_{k}$ by solving $\bm{H}_{k}\bm{d}_{k}=-\bm{g}_{k}$ using Algorithm~\ref{Algo:EDistR}. It is easy to see that our approximation of the Newton direction, $\tilde{\bm{d}}_{k}$, satisfies
\begin{align*}
||\tilde{\bm{d}}_{k} - \bm{d}_{k}||_{\bm{H}_{k}} &\leq \epsilon ||\bm{d}_{k}||_{\bm{H}_{k}} \\
\text{with} \ \ \ \tilde{\bm{d}}_{k} &=-\bm{Z}_{k}\bm{g}_{k},
\end{align*}
where $\bm{Z}_{k}$ approximates $\bm{H}^{\dagger}_{k}$ according to the routine of Algorithm~\ref{Algo:EDistR}. The accuracy of this approximation is quantified in the following Lemma 

\begin{lemma}\label{Lemma:Bla}
Let $\bm{H}_{k}=\bm{H}(\bm{\lambda}_{k})$ be the Hessian of the dual function, then for any arbitrary $\epsilon > 0$ we have  
\begin{equation*}
e^{-\epsilon^{2}} \bm{v}^{\mathsf{T}} \bm{H}_{k}^{\dagger} \bm{v} \leq \bm{v}^{\mathsf{T}}\bm{Z}_{k}\bm{v}\leq e^{\epsilon^{2}}\bm{v}^{\mathsf{T}}\bm{H}_{k}^{\dagger}\bm{v}, \ \ \ \ \ \ \forall \bm{v} \in \bm{1}^{\perp}.
\end{equation*}
\end{lemma}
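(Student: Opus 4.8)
The plan is to recognize that the claimed double inequality is exactly the relation $\bm{Z}_k \approx_{\epsilon^2} \bm{H}_k^{\dagger}$ in the Loewner order, restricted to the subspace $\bm{1}^{\perp}$, and then to produce this relation from the solution-accuracy guarantee already established for the exact distributed solver. Since $\bm{H}_k$ is a weighted Laplacian of a connected graph (Lemma~\ref{lemma:Crap}), it is positive definite on $\bm{1}^{\perp}$ with $\bm{1}$ spanning its kernel; on this subspace $\bm{H}_k^{\dagger}$ acts as a genuine inverse. I would first record that every operation in Algorithm~\ref{Algo:EDistR} preserves $\bm{1}^{\perp}$ (the input $\bm{g}_k\in\bm{1}^{\perp}$, and the chain, the crude solve, and the Richardson updates all leave $\mathrm{span}(\bm{1})$ untouched), so that $\bm{Z}_k$ is a well-defined symmetric operator on $\bm{1}^{\perp}$. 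Symmetry of $\bm{Z}_k$ follows by unrolling the Richardson recursion into $\bm{Z}_k=\sum_{j=0}^{q-1}(\bm{I}-\bm{Z}'_0\bm{H}_k)^{j}\bm{Z}'_0$ and using the symmetry of the crude operator $\bm{Z}'_0$ (implicit in $\bm{Z}'_0\approx_{\epsilon_d}\bm{H}_k^{\dagger}$ of Lemma~\ref{r_hop_rude_lemma}) together with the identity $\bm{Z}'_0(\bm{I}-\bm{H}_k\bm{Z}'_0)=(\bm{I}-\bm{Z}'_0\bm{H}_k)\bm{Z}'_0$.

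Next, I would run Algorithm~\ref{Algo:EDistR} with an internal precision parameter $\epsilon'$ (fixed at the end) so that, by Lemma~\ref{Dist_Exact_algorithm_guarantee_lemma}, for every right-hand side $\bm{b}\in\bm{1}^{\perp}$ the output satisfies $\|\bm{Z}_k\bm{b}-\bm{H}_k^{\dagger}\bm{b}\|_{\bm{H}_k}\le \epsilon'\|\bm{H}_k^{\dagger}\bm{b}\|_{\bm{H}_k}$; this holds uniformly in $\bm{b}$ because $\bm{Z}_k$ is a fixed linear map and the guarantee does not depend on the structure of the input. Substituting $\bm{b}=\bm{H}_k\bm{w}$ with $\bm{w}\in\bm{1}^{\perp}$ converts this into $\|(\bm{Z}_k\bm{H}_k-\bm{I})\bm{w}\|_{\bm{H}_k}\le\epsilon'\|\bm{w}\|_{\bm{H}_k}$, and setting $\bm{u}=\bm{H}_k^{1/2}\bm{w}$ shows that the symmetric matrix $\bm{\Pi}:=\bm{H}_k^{1/2}\bm{Z}_k\bm{H}_k^{1/2}$ obeys $\|(\bm{\Pi}-\bm{I})\bm{u}\|\le\epsilon'\|\bm{u}\|$ for all $\bm{u}\in\bm{1}^{\perp}$. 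Hence the eigenvalues of $\bm{\Pi}$ lie in $[1-\epsilon',\,1+\epsilon']$ on $\bm{1}^{\perp}$, i.e. $(1-\epsilon')\bm{I}\preceq\bm{\Pi}\preceq(1+\epsilon')\bm{I}$, and conjugating by $\bm{H}_k^{\dagger/2}$ yields $(1-\epsilon')\bm{H}_k^{\dagger}\preceq\bm{Z}_k\preceq(1+\epsilon')\bm{H}_k^{\dagger}$ on $\bm{1}^{\perp}$.

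It then remains to absorb the linear factors into the exponential form of the statement. I would choose $\epsilon'=1-e^{-\epsilon^{2}}$, so that $1-\epsilon'=e^{-\epsilon^{2}}$ gives the lower bound directly, while the upper bound follows from $1+\epsilon'=2-e^{-\epsilon^{2}}\le e^{\epsilon^{2}}$, which is equivalent to $e^{\epsilon^{2}}+e^{-\epsilon^{2}}\ge 2$ and hence always holds. Reading the resulting Loewner sandwich against an arbitrary $\bm{v}\in\bm{1}^{\perp}$ produces exactly the inequality asserted in the lemma. For completeness I would confirm, via Lemma~\ref{eps_d_lemma} and Lemma~\ref{Dist_Exact_algorithm_guarantee_lemma}, that a chain of length $d=\lceil\log(\cdot)\rceil$ (guaranteeing $\epsilon_d<\tfrac13\ln 2$) and $q=\mathcal{O}(\log\frac{1}{\epsilon})$ Richardson steps actually achieve precision $\epsilon'$, so the required $\bm{Z}_k$ is genuinely computable in the distributed setting.

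The step I expect to be the main obstacle is the clean passage from the per-vector solution guarantee of Lemma~\ref{Dist_Exact_algorithm_guarantee_lemma} to a uniform, symmetric, operator-level Loewner bound while working entirely on the singular Laplacian's range $\bm{1}^{\perp}$: this demands both the symmetry of $\bm{Z}_k$ (so that $\bm{\Pi}$ is symmetric and the operator-norm bound becomes a spectral bound) and care that $\bm{H}_k^{\dagger}$ plays the role of a true inverse on $\bm{1}^{\perp}$. The choice $\epsilon'=1-e^{-\epsilon^{2}}$ is the small but essential calculation that explains why the lemma is stated with $\epsilon^{2}$ rather than $\epsilon$ — it is precisely what later (through Lemma~\ref{lemma_approx_matrix_inverse}) makes the Newton-direction error $\|\tilde{\bm{d}}_k-\bm{d}_k\|_{\bm{H}_k}\le\epsilon\|\bm{d}_k\|_{\bm{H}_k}$ emerge with a clean $\epsilon$.
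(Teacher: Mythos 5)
Your proof is sound, and its core mechanism --- passing to the spectrum of $\bm{\Pi}=\bm{H}_k^{1/2}\bm{Z}_k\bm{H}_k^{1/2}$ on $\bm{1}^{\perp}$ and calibrating the internal precision to $\epsilon'=1-e^{-\epsilon^{2}}$ --- is the right one. Note, though, that your implication runs in the opposite direction from the one the paper's machinery is built for. The paper's toolkit suggests proving the operator statement directly: Lemma~\ref{r_hop_rude_lemma} gives $\bm{Z}'_0\approx_{\epsilon_d}\bm{H}_k^{\dagger}$ for the crude solver; unrolling the Richardson loop of Algorithm~\ref{Algo:EDistR} gives $\bm{Z}_k=\bigl[\bm{I}-(\bm{I}-\bm{Z}'_0\bm{H}_k)^{q}\bigr]\bm{H}_k^{\dagger}$, so the eigenvalues of $\bm{\Pi}$ on $\bm{1}^{\perp}$ lie in $[1-r^{q},\,1+r^{q}]$ with $r=e^{\epsilon_d}-1<2^{1/3}-1$ (using Lemma~\ref{eps_d_lemma}), and choosing $q=\mathcal{O}(\log(1/\epsilon))$ with $r^{q}\le 1-e^{-\epsilon^{2}}$ yields the sandwich; the per-vector Newton-direction bound is then \emph{deduced} from this operator bound via Lemma~\ref{lemma_approx_matrix_inverse} --- which is precisely that lemma's role in the paper. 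You instead take the per-vector guarantee of Lemma~\ref{Dist_Exact_algorithm_guarantee_lemma} as the starting point and convert it back into an operator bound. The two routes are equivalent because, once $\bm{Z}_k$ is known to be symmetric, the achieved per-vector precision and $\|\bm{\Pi}-\bm{I}\|_{2}$ on $\bm{1}^{\perp}$ are the same number; your symmetry argument (the unrolled sum $\sum_{j=0}^{q-1}(\bm{I}-\bm{Z}'_0\bm{H}_k)^{j}\bm{Z}'_0$ together with the commutation identity) is exactly the extra ingredient your direction needs, and you supply it correctly.

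Two remarks. First, the calibration step is not a cosmetic choice but the crux of validity: if one reads the lemma as describing the operator obtained by running Algorithm~\ref{Algo:EDistR} at precision $\epsilon$ itself (a reading the sentence preceding the lemma invites), the claim would be false, since per-vector precision $\epsilon$ only places the spectrum of $\bm{\Pi}$ in $[1-\epsilon,1+\epsilon]$ and $1-\epsilon<e^{-\epsilon^{2}}$ for all $\epsilon\in(0,1)$. Your reading --- run the solver at internal precision $\Theta(\epsilon^{2})$, which still costs only $q=\mathcal{O}(\log(1/\epsilon))$ iterations --- is the one under which the $e^{\pm\epsilon^{2}}$ sandwich holds, and it is the reading consistent with how the lemma is invoked in the convergence theorem; stating this explicitly, as you do, is a genuine improvement in precision over the paper's phrasing. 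Second, your claim that the crude solve ``leaves $\mathrm{span}(\bm{1})$ untouched'' is both unnecessary and not literally true (already $\bm{D}_0^{-1}$ fails to preserve $\bm{1}^{\perp}$); this is harmless, because conjugation by $\bm{H}_k^{1/2}$ annihilates the $\bm{1}$-component on both sides, so $\bm{\Pi}$ acts on $\bm{1}^{\perp}$ and the $\|\cdot\|_{\bm{H}_k}$ seminorm ignores any drift along $\bm{1}$ regardless of whether $\bm{Z}_k$ preserves that subspace.
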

\begin{proof}
See Appendix. 
\end{proof}

Given such an accurate approximation, next we analyze the iteration scheme of our proposed method showing that similar to standard Newton methods, we achieve superlinear convergence within a neighborhood of the optimal value. We start by analyzing the change in the Laplacian norm of the gradient between two successive iterations 
\begin{lemma}
Consider the following iteration scheme $\bm{\lambda}_{k+1}=\bm{\lambda}_{k} + \alpha_{k}\tilde{\bm{d}}_{k}$ with $\alpha_{k} \in (0,1]$, then, for any arbitrary $\epsilon>0$, the Laplacian norm of the gradient, $||\bm{g}_{k+1}||_{\mathcal{L}}$, follows:
\begin{align}\label{Eq:NormG}
||\bm{g}_{k+1}||_{\mathcal{L}} &\leq \left[1-\alpha_{k} +\alpha_{k}\epsilon\frac{\mu_{n}(\mathcal{L})}{\mu_{2}(\mathcal{L})}\sqrt{\frac{\Gamma}{\gamma}}\right]||\bm{g}_{k}||_{\mathcal{L}}  \\ \nonumber 
& \hspace{10em}+ \frac{\alpha_{k}^{2}B\Gamma^{2}(1+\epsilon)^{2}}{2\mu^{2}_{2}(\mathcal{L})}||\bm{g}_{k}||_{\mathcal{L}}^{2},
\end{align}
with $\mu_{n}(\mathcal{L})$ and $\mu_{2}(\mathcal{L})$ being the largest and second smallest eigenvalues of $\mathcal{L}$, $\Gamma$ and $\gamma$ denoting the upper and lower bounds on the dual's Hessian, and $B \in \mathbb{R}$ is defined in Lemma~\ref{lemma:B}. 
\end{lemma}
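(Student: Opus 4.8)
The plan is to start from the second-order estimate of Lemma~\ref{lemma:B} applied to the two consecutive iterates $\hat{\bm{\lambda}}=\bm{\lambda}_{k+1}=\bm{\lambda}_{k}+\alpha_{k}\tilde{\bm{d}}_{k}$ and $\bm{\lambda}=\bm{\lambda}_{k}$. Since $\hat{\bm{\lambda}}-\bm{\lambda}=\alpha_{k}\tilde{\bm{d}}_{k}$, this gives $\|\bm{g}_{k+1}-\bm{g}_{k}-\alpha_{k}\bm{H}_{k}\tilde{\bm{d}}_{k}\|_{\mathcal{L}}\le\frac{B}{2}\alpha_{k}^{2}\|\tilde{\bm{d}}_{k}\|_{\mathcal{L}}^{2}$, so by the triangle inequality $\|\bm{g}_{k+1}\|_{\mathcal{L}}\le\|\bm{g}_{k}+\alpha_{k}\bm{H}_{k}\tilde{\bm{d}}_{k}\|_{\mathcal{L}}+\frac{B}{2}\alpha_{k}^{2}\|\tilde{\bm{d}}_{k}\|_{\mathcal{L}}^{2}$. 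Writing $\tilde{\bm{d}}_{k}=\bm{d}_{k}+\bm{z}$ with $\bm{z}=\tilde{\bm{d}}_{k}-\bm{d}_{k}$ and using the exact Newton identity $\bm{H}_{k}\bm{d}_{k}=-\bm{g}_{k}$ (valid since $\bm{g}_{k}=\bm{A}\bm{x}(\bm{\lambda}_{k})-\bm{b}\in\bm{1}^{\perp}=\mathrm{range}(\bm{H}_{k})$), the first term collapses to $\bm{g}_{k}+\alpha_{k}\bm{H}_{k}\tilde{\bm{d}}_{k}=(1-\alpha_{k})\bm{g}_{k}+\alpha_{k}\bm{H}_{k}\bm{z}$. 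Because $\alpha_{k}\in(0,1]$ makes $1-\alpha_{k}\ge0$, this yields the $(1-\alpha_{k})\|\bm{g}_{k}\|_{\mathcal{L}}$ contribution plus the error term $\alpha_{k}\|\bm{H}_{k}\bm{z}\|_{\mathcal{L}}$. It then remains to convert $\|\bm{H}_{k}\bm{z}\|_{\mathcal{L}}$ and $\|\tilde{\bm{d}}_{k}\|_{\mathcal{L}}$ into multiples of $\|\bm{g}_{k}\|_{\mathcal{L}}$.

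The engine for every such conversion is a spectral sandwich between $\bm{H}_{k}$ and the unweighted Laplacian $\mathcal{L}$. By Lemma~\ref{lemma:Crap}, $\bm{H}_{k}=\bm{A}[\nabla^{2}f(\bm{x}(\bm{\lambda}_{k}))]^{-1}\bm{A}^{\mathsf{T}}$ with $\nabla^{2}f$ diagonal and entries $\ddot{\bm{\Phi}}_{e}$; Assumption~\ref{Ass:Two} gives $\frac{1}{\Gamma}\bm{I}\preceq[\nabla^{2}f]^{-1}\preceq\frac{1}{\gamma}\bm{I}$, and conjugating by $\bm{A}$ yields $\frac{1}{\Gamma}\mathcal{L}\preceq\bm{H}_{k}\preceq\frac{1}{\gamma}\mathcal{L}$ on $\bm{1}^{\perp}$. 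From the right-hand inequality together with $\mathcal{L}\preceq\mu_{n}(\mathcal{L})\bm{I}$ I obtain $\bm{H}_{k}\preceq(\mu_{n}(\mathcal{L})/\gamma)\bm{I}$; from the left-hand inequality I obtain (taking pseudo-inverses on $\bm{1}^{\perp}$) $\bm{H}_{k}^{\dagger}\preceq\Gamma\mathcal{L}^{\dagger}$, which I pair with the elementary spectral fact $\mathcal{L}^{\dagger}\preceq\mu_{2}(\mathcal{L})^{-2}\mathcal{L}$ on $\bm{1}^{\perp}$. The same sandwich gives the norm comparison $\|\bm{v}\|_{\mathcal{L}}\le\sqrt{\Gamma}\|\bm{v}\|_{\bm{H}_{k}}$.

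I would then discharge the two remaining quantities. First, $\|\bm{d}_{k}\|_{\bm{H}_{k}}^{2}=\bm{g}_{k}^{\mathsf{T}}\bm{H}_{k}^{\dagger}\bm{g}_{k}\le\Gamma\,\bm{g}_{k}^{\mathsf{T}}\mathcal{L}^{\dagger}\bm{g}_{k}\le\frac{\Gamma}{\mu_{2}(\mathcal{L})^{2}}\|\bm{g}_{k}\|_{\mathcal{L}}^{2}$, i.e.\ $\|\bm{d}_{k}\|_{\bm{H}_{k}}\le\frac{\sqrt{\Gamma}}{\mu_{2}(\mathcal{L})}\|\bm{g}_{k}\|_{\mathcal{L}}$. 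For the step length, the comparison above with the approximation guarantee $\|\tilde{\bm{d}}_{k}-\bm{d}_{k}\|_{\bm{H}_{k}}\le\epsilon\|\bm{d}_{k}\|_{\bm{H}_{k}}$ gives $\|\tilde{\bm{d}}_{k}\|_{\mathcal{L}}\le\sqrt{\Gamma}(1+\epsilon)\|\bm{d}_{k}\|_{\bm{H}_{k}}\le\frac{\Gamma(1+\epsilon)}{\mu_{2}(\mathcal{L})}\|\bm{g}_{k}\|_{\mathcal{L}}$, which, squared and scaled by $\frac{B}{2}\alpha_{k}^{2}$, is exactly the quadratic term of~\eqref{Eq:NormG}. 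For the linear error term I expand $\|\bm{H}_{k}\bm{z}\|_{\mathcal{L}}^{2}=\bm{z}^{\mathsf{T}}\bm{H}_{k}\mathcal{L}\bm{H}_{k}\bm{z}$, bound $\mathcal{L}\preceq\mu_{n}(\mathcal{L})\bm{I}$ to reach $\mu_{n}(\mathcal{L})\,\bm{z}^{\mathsf{T}}\bm{H}_{k}^{2}\bm{z}$, and then use $\bm{H}_{k}\preceq(\mu_{n}(\mathcal{L})/\gamma)\bm{I}$ (hence $\bm{H}_{k}^{2}\preceq(\mu_{n}(\mathcal{L})/\gamma)\bm{H}_{k}$) to get $\|\bm{H}_{k}\bm{z}\|_{\mathcal{L}}\le\frac{\mu_{n}(\mathcal{L})}{\sqrt{\gamma}}\|\bm{z}\|_{\bm{H}_{k}}$. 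Combining with the approximation guarantee and the bound on $\|\bm{d}_{k}\|_{\bm{H}_{k}}$ yields $\|\bm{H}_{k}\bm{z}\|_{\mathcal{L}}\le\epsilon\frac{\mu_{n}(\mathcal{L})}{\mu_{2}(\mathcal{L})}\sqrt{\frac{\Gamma}{\gamma}}\|\bm{g}_{k}\|_{\mathcal{L}}$, the desired linear coefficient.

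The delicate point—the step I would verify most carefully—is precisely this last constant. The tempting shortcut of substituting $\mathcal{L}\preceq\Gamma\bm{H}_{k}$ inside $\bm{z}^{\mathsf{T}}\bm{H}_{k}\mathcal{L}\bm{H}_{k}\bm{z}$ and then peeling off a factor $(\mu_{n}(\mathcal{L})/\gamma)^{2}$ overshoots by $\sqrt{\Gamma/\gamma}$, producing the wrong constant $\Gamma/\gamma$ instead of $\sqrt{\Gamma/\gamma}$. Bounding $\mathcal{L}$ by its largest eigenvalue $\mu_{n}(\mathcal{L})$ \emph{first}, and only afterwards invoking $\bm{H}_{k}\preceq(\mu_{n}(\mathcal{L})/\gamma)\bm{I}$, is what keeps a single power of $\gamma^{-1/2}$ and delivers the sharp $\sqrt{\Gamma/\gamma}$. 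Assembling the three estimates and collecting the coefficients of $\|\bm{g}_{k}\|_{\mathcal{L}}$ and $\|\bm{g}_{k}\|_{\mathcal{L}}^{2}$ then reproduces~\eqref{Eq:NormG} verbatim.
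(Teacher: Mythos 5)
Your proof is correct and follows what is essentially the paper's own route: the Lipschitz--Hessian estimate of Lemma~\ref{lemma:B} plus the triangle inequality, the splitting $\tilde{\bm{d}}_{k}=\bm{d}_{k}+(\tilde{\bm{d}}_{k}-\bm{d}_{k})$ with $\bm{H}_{k}\bm{d}_{k}=-\bm{g}_{k}$, and the spectral sandwich $\frac{1}{\Gamma}\mathcal{L}\preceq\bm{H}_{k}\preceq\frac{1}{\gamma}\mathcal{L}$ to convert everything into $\|\bm{g}_{k}\|_{\mathcal{L}}$, which is exactly how the paper obtains the intermediate bound $\|\tilde{\bm{d}}_{k}\|_{\mathcal{L}}^{2}\leq\frac{\Gamma^{2}(1+\epsilon)^{2}}{\mu_{2}^{2}(\mathcal{L})}\|\bm{g}_{k}\|_{\mathcal{L}}^{2}$ it cites when proving the convergence theorem. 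Your care in ordering the two spectral bounds in the $\|\bm{H}_{k}(\tilde{\bm{d}}_{k}-\bm{d}_{k})\|_{\mathcal{L}}$ estimate is well placed, as it is precisely what yields the sharp $\sqrt{\Gamma/\gamma}$ coefficient appearing in the lemma.
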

\begin{proof}
See Appendix. 
\end{proof}
At this stage, we are ready to present the main results quantifying the convergence phases exhibited by our approach:
\begin{theorem}
Let $\gamma$, $\Gamma$, $B$ be the constants defined in Assumption~\ref{Ass:Two} and Lemma~\ref{lemma:Crap}, $\mu_{n}(\mathcal{L})$ and $\mu_{2}(\mathcal{L})$ representing the largest and second smallest eigenvalues of the normalized laplacian $\mathcal{L}$, $\epsilon \in \left(0, \frac{\mu_{2}(\mathcal{L}}{\mu_{n}(\mathcal{L})}\sqrt{\frac{\Gamma}{\gamma}}\right)$ the precision parameter for the SDDM (Section~\ref{Sec:SDDM}) solver, and letting the optimal step-size parameter $\alpha^{*}=\frac{e^{-\epsilon^{2}}}{(1+\epsilon)^{2}}\left(\frac{\gamma}{\Gamma}\frac{\mu_{2}(\mathcal{L})}{\mu_{n}(\mathcal{L})}\right)^{2}$. Then the proposed algorithm given by the $\bm{\lambda}_{k+1}=\bm{\lambda}_{k}+\alpha^{*}\tilde{\bm{d}}_{k}$ exhibits the following three phases of convergence:
\begin{enumerate}
\item \textbf{Strict Decreases Phase:} While $||\bm{g}_{k}||_{\mathcal{L}} \geq \eta_{1}$:
\begin{equation*}
q(\bm{\lambda}_{k+1})-q(\bm{\lambda}_{k}) \leq -\frac{1}{2} \frac{e^{-2\epsilon^{2}}}{(1+\epsilon)^{2}}\frac{\gamma^{3}}{\Gamma^{2}}\frac{\mu_{2}^{2}(\mathcal{L})}{\mu_{n}^{4}(\mathcal{L})}\eta_{1}^{2}.
\end{equation*} 
\item \textbf{Quadratic Decrease Phase:} While $\eta_{0} \leq ||\bm{g}_{k}||_{\mathcal{L}}\eta_{1}$:
\begin{equation*}
||\bm{g}_{k+1}||_{\mathcal{L}} \leq \frac{1}{\eta_{1}}||\bm{g}_{k}||_{\mathcal{L}}^{2}.
\end{equation*}
\item \textbf{Terminal Phase:} When $||\bm{g}_{k}||_{\mathcal{L}} \leq \eta_{0}$: 
\begin{equation*}
||\bm{g}_{k+1}||_{\mathcal{L}}\leq \sqrt{\left[1-\alpha^{*}+\alpha^{*}\epsilon\frac{\mu_{n}(\mathcal{L})}{\mu_{2}(\mathcal{L})}\sqrt{\frac{\Gamma}{\gamma}}\right]}||\bm{g}_{k}||_{\mathcal{L}},
\end{equation*}
\end{enumerate}
where $\eta_{0}=\frac{\bm{\xi}(1-\bm{\xi})}{\bm{\zeta}}$ and $\eta_{1}=\frac{1-\bm{\xi}}{\bm{\zeta}}$, with 
\begin{align}\label{Eq:Aux}
\bm{\xi}&=\sqrt{\left[1-\alpha^*+\alpha^*\epsilon\frac{\mu_{n}(\mathcal{L})}{\mu_{2}(\mathcal{L})}\sqrt{\frac{\Gamma}{\gamma}}\right]}\\\nonumber
\bm{\zeta}&=\frac{B(\alpha^{*}\Gamma(1+\epsilon))^{2}}{2\mu_{2}^{2}(\mathcal{L})}
\end{align}
\end{theorem}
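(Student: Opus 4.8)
The proof rests almost entirely on the single-step gradient recursion of equation~\ref{Eq:NormG}. First I would substitute the prescribed step size $\alpha_k=\alpha^*$ into~\ref{Eq:NormG} and rewrite the resulting inequality in the compact form $||\bm{g}_{k+1}||_{\mathcal{L}} \le \bm{\xi}^2||\bm{g}_k||_{\mathcal{L}} + \bm{\zeta}||\bm{g}_k||_{\mathcal{L}}^2$, where $\bm{\xi}$ and $\bm{\zeta}$ are exactly the quantities defined in~\ref{Eq:Aux}: $\bm{\xi}^2$ is the bracketed linear coefficient of~\ref{Eq:NormG} evaluated at $\alpha^*$, and $\bm{\zeta}$ is the coefficient of the quadratic term. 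Under the stated restriction on $\epsilon$ the linear coefficient stays below $1$, so $\bm{\xi}\in(0,1)$, which is what makes the thresholds $\eta_0,\eta_1>0$ meaningful. The entire statement then reduces to reading off this scalar quadratic recursion in three regimes of $||\bm{g}_k||_{\mathcal{L}}$.

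The terminal and quadratic phases (items~3 and~2) follow directly, and I would dispatch them first. In the terminal phase $||\bm{g}_k||_{\mathcal{L}} \le \eta_0 = \frac{\bm{\xi}(1-\bm{\xi})}{\bm{\zeta}}$ gives $\bm{\zeta}||\bm{g}_k||_{\mathcal{L}} \le \bm{\xi}(1-\bm{\xi})$, so the quadratic term obeys $\bm{\zeta}||\bm{g}_k||_{\mathcal{L}}^2 \le \bm{\xi}(1-\bm{\xi})||\bm{g}_k||_{\mathcal{L}}$ and the recursion collapses to $||\bm{g}_{k+1}||_{\mathcal{L}} \le [\bm{\xi}^2+\bm{\xi}(1-\bm{\xi})]||\bm{g}_k||_{\mathcal{L}} = \bm{\xi}||\bm{g}_k||_{\mathcal{L}}$, the claimed linear contraction. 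For the quadratic phase, the bound $||\bm{g}_{k+1}||_{\mathcal{L}} \le \frac{1}{\eta_1}||\bm{g}_k||_{\mathcal{L}}^2$ with $\frac{1}{\eta_1} = \frac{\bm{\zeta}}{1-\bm{\xi}}$ is equivalent, after moving the quadratic term, to the scalar inequality $\bm{\xi}^2 \le \frac{\bm{\zeta}\bm{\xi}}{1-\bm{\xi}}||\bm{g}_k||_{\mathcal{L}}$, i.e. $||\bm{g}_k||_{\mathcal{L}} \ge \frac{\bm{\xi}(1-\bm{\xi})}{\bm{\zeta}}=\eta_0$, which is precisely the standing assumption of that phase; the companion bound $||\bm{g}_k||_{\mathcal{L}} \le \eta_1$ is what certifies that the step is genuinely contracting.

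The real work is the strict-decrease phase (item~1), the only statement phrased in terms of the objective $q$ rather than the gradient, so the gradient recursion alone does not suffice. Here I would build a descent inequality from scratch: integrating the Lipschitz-Hessian estimate of Lemma~\ref{lemma:B} along the segment $[\bm{\lambda}_k,\bm{\lambda}_{k+1}]$ yields a cubic model $q(\bm{\lambda}_{k+1}) \le q(\bm{\lambda}_k) + \alpha^*\bm{g}_k^{\mathsf{T}}\tilde{\bm{d}}_k + \frac{(\alpha^*)^2}{2}\tilde{\bm{d}}_k^{\mathsf{T}}\bm{H}_k\tilde{\bm{d}}_k + \frac{B(\alpha^*)^3}{6}||\tilde{\bm{d}}_k||_{\mathcal{L}}^3$, where Cauchy--Schwarz in the $\mathcal{L}$ and $\mathcal{L}^{\dagger}$ geometries is used so the remainder lands in the $\mathcal{L}$-norm. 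Since $\tilde{\bm{d}}_k=-\bm{Z}_k\bm{g}_k$ and $\bm{g}_k\in\bm{1}^{\perp}$, Lemma~\ref{Lemma:Bla} controls the first-order term via $\bm{g}_k^{\mathsf{T}}\bm{Z}_k\bm{g}_k \ge e^{-\epsilon^2}\bm{g}_k^{\mathsf{T}}\bm{H}_k^{\dagger}\bm{g}_k$, while the curvature term $\tilde{\bm{d}}_k^{\mathsf{T}}\bm{H}_k\tilde{\bm{d}}_k = \bm{g}_k^{\mathsf{T}}\bm{Z}_k\bm{H}_k\bm{Z}_k\bm{g}_k$ is bounded by $e^{2\epsilon^2}\bm{g}_k^{\mathsf{T}}\bm{H}_k^{\dagger}\bm{g}_k$ using the $\approx_{\epsilon^2}$ relation together with Lemma~\ref{approx_lemma_facts}. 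Substituting $\alpha^*=\frac{e^{-\epsilon^2}}{(1+\epsilon)^2}(\frac{\gamma}{\Gamma}\frac{\mu_2(\mathcal{L})}{\mu_n(\mathcal{L})})^2$ makes the coefficient of $\bm{g}_k^{\mathsf{T}}\bm{H}_k^{\dagger}\bm{g}_k$ collapse to a clean negative multiple, and finally I would convert $\bm{g}_k^{\mathsf{T}}\bm{H}_k^{\dagger}\bm{g}_k$ into $||\bm{g}_k||_{\mathcal{L}}^2$ using $\frac{1}{\Gamma}\mathcal{L} \preceq \bm{H}_k \preceq \frac{1}{\gamma}\mathcal{L}$ (which follows from $\bm{H}_k=\bm{A}[\nabla^2 f(\bm{x}(\bm{\lambda}_k))]^{-1}\bm{A}^{\mathsf{T}}$ in Lemma~\ref{lemma:Crap} and $\gamma\le\ddot{\bm{\Phi}}_e\le\Gamma$) together with the eigenvalue bounds $\mu_2(\mathcal{L}),\mu_n(\mathcal{L})$, and then invoking $||\bm{g}_k||_{\mathcal{L}}\ge\eta_1$ to pin down the stated constant. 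This phase is the main obstacle: the delicate points are propagating the $e^{\pm\epsilon^2}$ inexactness factors through the sandwiched form $\bm{Z}_k\bm{H}_k\bm{Z}_k$, and above all showing the cubic remainder $\frac{B(\alpha^*)^3}{6}||\tilde{\bm{d}}_k||_{\mathcal{L}}^3$ is dominated — which requires re-expressing $||\tilde{\bm{d}}_k||_{\mathcal{L}}$ in terms of $||\bm{g}_k||_{\mathcal{L}}$ and exploiting the smallness of $\alpha^*$ so that the net coefficient reduces exactly to $-\frac{1}{2}\frac{e^{-2\epsilon^2}}{(1+\epsilon)^2}\frac{\gamma^3}{\Gamma^2}\frac{\mu_2^2(\mathcal{L})}{\mu_n^4(\mathcal{L})}$.
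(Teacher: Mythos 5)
Your treatment of the quadratic decrease and terminal phases is correct and is essentially identical to the paper's own argument: the paper also substitutes $\alpha_k=\alpha^{*}$ into Equation~\ref{Eq:NormG} to get the scalar recursion $||\bm{g}_{k+1}||_{\mathcal{L}}\le \bm{\xi}^{2}||\bm{g}_{k}||_{\mathcal{L}}+\bm{\zeta}||\bm{g}_{k}||_{\mathcal{L}}^{2}$ and then reads off the two regimes exactly as you do, using $||\bm{g}_{k}||_{\mathcal{L}}\ge\eta_{0}$ in the quadratic phase and $||\bm{g}_{k}||_{\mathcal{L}}\le\eta_{0}$ in the terminal phase.

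The strict decrease phase, however, contains a genuine gap, and it is precisely the step you flag as ``the main obstacle.'' Your cubic model
\begin{equation*}
q(\bm{\lambda}_{k+1}) \le q(\bm{\lambda}_{k}) + \alpha^{*}\bm{g}_{k}^{\mathsf{T}}\tilde{\bm{d}}_{k} + \frac{(\alpha^{*})^{2}}{2}\tilde{\bm{d}}_{k}^{\mathsf{T}}\bm{H}_{k}\tilde{\bm{d}}_{k} + \frac{B(\alpha^{*})^{3}}{6}||\tilde{\bm{d}}_{k}||_{\mathcal{L}}^{3}
\end{equation*}
cannot yield the claimed uniform decrement: after re-expressing $||\tilde{\bm{d}}_{k}||_{\mathcal{L}}$ in terms of $||\bm{g}_{k}||_{\mathcal{L}}$, the negative terms scale like $||\bm{g}_{k}||_{\mathcal{L}}^{2}$ while the remainder scales like $||\bm{g}_{k}||_{\mathcal{L}}^{3}$, and in the strict decrease phase $||\bm{g}_{k}||_{\mathcal{L}}\ge\eta_{1}$ is bounded \emph{below} but not above. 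Since $\alpha^{*}$ is a fixed constant (independent of $k$ and of $||\bm{g}_{k}||_{\mathcal{L}}$), its ``smallness'' cannot defeat the cubic term: for $||\bm{g}_{k}||_{\mathcal{L}}$ large enough the right-hand side becomes positive and certifies no decrease at all. The paper avoids the Lipschitz property entirely in this phase. It writes the \emph{exact} second-order Taylor expansion with a mean-value Hessian, $q(\bm{\lambda}_{k+1}) = q(\bm{\lambda}_{k}) + \alpha^{*}\bm{g}_{k}^{\mathsf{T}}\tilde{\bm{d}}_{k} + \frac{(\alpha^{*})^{2}}{2}\tilde{\bm{d}}_{k}^{\mathsf{T}}\bm{H}(\bm{z})\tilde{\bm{d}}_{k}$ for some $\bm{z}$ on the segment, and then uses the \emph{uniform} curvature bound $\bm{H}(\cdot)\preceq\frac{1}{\gamma}\mathcal{L}$, which follows from Assumption~\ref{Ass:Two}(1) alone and holds at every point, so no cubic remainder ever appears. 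Combining this with $||\tilde{\bm{d}}_{k}||^{2}_{\mathcal{L}}\le\frac{\Gamma^{2}(1+\epsilon)^{2}}{\mu_{2}^{2}(\mathcal{L})}||\bm{g}_{k}||^{2}_{\mathcal{L}}$ and the first-order bound $\bm{g}_{k}^{\mathsf{T}}\tilde{\bm{d}}_{k}\le-\frac{e^{-\epsilon^{2}}\gamma}{\mu_{n}^{2}(\mathcal{L})}||\bm{g}_{k}||^{2}_{\mathcal{L}}$ (via Lemma~\ref{Lemma:Bla}, as you propose) gives a decrement of the form $-\bigl[\alpha\frac{e^{-\epsilon^{2}}\gamma}{\mu_{n}^{2}(\mathcal{L})}-\alpha^{2}\frac{\Gamma^{2}(1+\epsilon)^{2}}{2\gamma\mu_{2}^{2}(\mathcal{L})}\bigr]||\bm{g}_{k}||^{2}_{\mathcal{L}}$, whose maximizer in $\alpha$ is exactly $\alpha^{*}$ and whose value at $\alpha^{*}$, together with $||\bm{g}_{k}||_{\mathcal{L}}\ge\eta_{1}$, is exactly the stated constant. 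The Lipschitz constant $B$ enters the theorem only through the gradient recursion of Equation~\ref{Eq:NormG}, i.e., only in the other two phases; your plan imports it into the one phase where it is both unnecessary and fatal.
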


\begin{proof}
We will proof the above theorem by handling each of the cases separately. We start by considering the case when $||\bm{g}_{k}||_{\mathcal{L}} > \eta_{1}$ (i.e., \textbf{Strict Decrease Phase}). We have:
\begin{align*}
q(\lambda_{k+1}) & = q(\lambda_{k}) + \bm{g}_{k}^{\mathsf{T}}(\bm{\lambda}_{k+1}-\bm{\lambda}_{k})  \\
&\hspace{5em}+\frac{1}{2}(\bm{\lambda}_{k+1}-\bm{\lambda}_{k})^{\mathsf{T}}\bm{H}(\bm{z})(\lambda_{k+1}-\bm{\lambda}_{k}) \\ 
& = q(\bm{\lambda}_{k}) + \alpha_{k} \bm{g}_{k}^{\mathsf{T}}\tilde{\bm{d}}_{k} +\frac{\bm{\alpha}_{k}^{2}}{2} \tilde{\bm{d}}_{k}^{\mathsf{T}}\bm{H}(\bm{z})\tilde{\bm{d}}_{k} \\
& \leq q(\bm{\lambda}_{k}) + \alpha_{k}\bm{g}_{k}^{\mathsf{T}}\tilde{\bm{d}}_{k}+\frac{\alpha_{k}^{2}}{2\gamma}\tilde{\bm{d}}^{\mathsf{T}}_{k} \mathcal{L}\tilde{\bm{d}}_{k},
\end{align*} 
where the last steps holds since $\bm{H}(\cdot) \preceq \frac{1}{\gamma}\mathcal{L}$. Noticing that $||\tilde{\bm{d}}_{k}||_{\mathcal{L}}^{2} \leq \frac{\Gamma^{2}(1+\epsilon)^{2}}{\mu_{2}^{2}(\mathcal{L})}||\bm{g}_{k}||_{\mathcal{L}}^{2}$ (see Appendix), the only remaining step needed is to evaluate $\bm{g}_{k}^{\mathsf{T}}\tilde{\bm{d}}_{k}$. Knowing that $\tilde{\bm{d}}_{k} = -\bm{Z}_{k}\bm{g}_{k}$, we recognize
\begin{align*}
\bm{g}_{k}^{\mathsf{T}}\tilde{\bm{d}}_{k} &= - \bm{g}_{k}^{\mathsf{T}}\bm{Z}_{k}\bm{g}_{k} \leq e^{-\epsilon^{2}}\bm{g}_{k}^{\mathsf{T}}\bm{H}_{k}^{\dagger}\bm{g}_{k} \ (\text{Lemma~\ref{Lemma:Bla}}) \\
&\leq -\frac{e^{-\epsilon^{2}}}{\mu_{n}(\bm{H}_{k})}\bm{g}_{k}^{\mathsf{T}}\bm{g}_{k} \leq -\frac{e^{-\epsilon}}{\mu_{n}(\mathcal{L})}\bm{g}_{k}^{\mathsf{T}}\bm{g}_{k} \\
&\leq -\frac{e^{-\epsilon^{2}}\gamma}{\mu_{n}(\mathcal{L})}\frac{\bm{g}_{k}^{\mathsf{T}}\mathcal{L}\bm{g}_{k}}{\mu_{n}(\mathcal{L})} = \frac{e^{-\epsilon^{2}}\gamma}{\mu_{n}^{2}(\mathcal{L})}||\bm{g}_{k}||_{\mathcal{L}}^{2},
\end{align*}
where the last step follows from the fact that $\forall \bm{v}\in \mathbb{R}^{n}: \bm{v}^{\mathsf{T}}\bm{v} \geq \frac{\bm{v}^{\mathsf{T}}\mathcal{L}\bm{v}}{\mu_{n}(\mathcal{L})}$. Therefore, we can write
\begin{equation*}
q(\bm{\lambda}_{k+1})-q(\bm{\lambda}_{k}) \leq -\left[\alpha_{k}\frac{e^{-\epsilon^{2}}\gamma}{\mu_{n}^{2}(\mathcal{L})}-\alpha_{k}^{2}\frac{\Gamma^{2}(1+\epsilon)^{2}}{2\gamma\mu_{2}^{2}(\mathcal{L})}\right]||\bm{g}_{k}||_{\mathcal{L}}^{2}.
\end{equation*}
It is easy to see that $\alpha_{k}=\alpha^{*}=\frac{e^{-\epsilon^{2}}}{(1+\epsilon)^{2}}\left(\frac{\gamma}{\Gamma}\frac{\mu_{2}(\mathcal{L})}{\mu_{n}(\mathcal{L})}\right)^{2}$ minimizes the right-hand-side of the above equation. Using $||\bm{g}_{k}||_{\mathcal{L}}$ gives the constant decrement in the dual function between two successive iterations as 
\begin{equation*}
q(\bm{\lambda}_{k+1})-q(\bm{\lambda}_{k}) \leq -\frac{1}{2} \frac{e^{-2\epsilon^{2}}}{(1+\epsilon)^{2}}\frac{\gamma^{3}}{\Gamma^{2}}\frac{\mu_{2}^{2}(\mathcal{L})}{\mu_{n}^{4}(\mathcal{L})}\eta_{1}^{2}.
\end{equation*}
Considering the case when $\eta_{0} \leq ||\bm{g}_{k}||_{\mathcal{L}}^{2}\eta_{1}$ (i.e., \textbf{Quadratic Decrease Phase}), Equation~\ref{Eq:NormG} can be rewritten as
\begin{equation*}
||\bm{g}_{k+1}||_{\mathcal{L}}\leq \bm{\xi}^{2}||\bm{g}_{k}||_{\mathcal{L}} + \bm{\zeta}||\bm{g}_{k}||_{\mathcal{L}}^{2},
\end{equation*}
with $\bm{\xi}$ and $\bm{\zeta}$ defined as in Equation~\ref{Eq:Aux}. Further, noticing that since $||\bm{g}_{k}||_{\mathcal{L}} \geq \eta_{0}$ then $||\bm{g}_{k}||_{\mathcal{L}}\leq \frac{1}{\eta_{0}}||\bm{g}_{k}||_{\mathcal{L}}^{2}=\frac{\bm{\zeta}}{\bm{\xi}(1-\bm{\xi})}||\bm{g}_{k}||_{\mathcal{L}}^{2}$. Consequently the quadratic decrease phase is finalized by
\begin{align*}
||\bm{g}_{k+1}||_{\mathcal{L}} \leq \bm{\zeta}\left(\frac{\bm{\xi}}{1-\bm{\xi}}+1\right)||\bm{g}_{k}||_{\mathcal{L}}^{2}&=\frac{\bm{\zeta}}{1-\bm{\xi}}||\bm{g}_{k}||_{\mathcal{L}}^{2} \\ 
&=\frac{1}{\eta_{1}}||\bm{g}_{k}||_{\mathcal{L}}^{2}.
\end{align*}
Finally, we handle the case where $||\bm{g}_{k}||_{\mathcal{L}}\leq \eta_0$ (i.e., \textbf{Terminal Phase}). Since $||\bm{g}_{k}||_{\mathcal{L}}^{2} \leq \eta_{0}||\bm{g}_{k}||_{\mathcal{L}}$, it is easy to see that 
\begin{align*}
||\bm{g}_{k+1}||_{\mathcal{L}} &\leq (\bm{\xi}^{2}+\bm{\zeta}\eta_{0})||\bm{g}_{k}||_{\mathcal{L}}=(\bm{\xi}^{2}+\bm{\xi}(1-\bm{\xi})) ||\bm{g}_{k}||_{\mathcal{L}} \\
& \bm{\xi}||\bm{g}_{k}||_{\mathcal{L}} = \sqrt{\left[1-\alpha^*+\alpha^*\epsilon\frac{\mu_{n}(\mathcal{L})}{\mu_{2}(\mathcal{L})}\sqrt{\frac{\Gamma}{\gamma}}\right]}||\bm{g}_{k}||_{\mathcal{L}}.
\end{align*}
\end{proof}

Having proved the three convergence phases of our algorithm, we next analyze the number of iterations needed by each phase. These results are summarized in the following lemma: 

\begin{lemma}
Consider the algorithm given by the following iteration protocol: $\bm{\lambda}_{k+1}=\bm{\lambda}_{k+1}+\alpha^{*}\tilde{\bm{d}}_{k}$. Let $\bm{\lambda}_{0}$ be the initial value of the dual variable, and $q^{*}$ be the optimal value of the dual function. Then, the number of iterations needed by each of the three phases satisfy: 
\begin{enumerate}
\item The {\textbf{strict decrease phase}} requires the following number iterations to achieve the quadratic phase: 
\begin{equation*}
N_{1} \leq C_{1} \frac{\mu_{n}(\mathcal{L})^{2}}{\mu_{2}^{3}(\mathcal{L})} \left[1-\epsilon\frac{\mu_{n}(\mathcal{L})}{\mu_{2}(\mathcal{L})}\sqrt{\frac{\Gamma}{\gamma}}\right]^{-2},
\end{equation*}
where $C_{1}=C_{1}\left(\epsilon, \gamma, \Gamma, \bm{\delta}, q(\bm{\lambda}_{0}), q^{\star} \right)=2\bm{\delta}^{2}(1+\epsilon)^{2}\left[q(\bm{\lambda}_{0})-q^{\star}\right]\frac{\Gamma^{2}}{\gamma}$.
\item The \textbf{quadratic decrease phase} requires the following number of iterations to terminate: 
\begin{equation*}
N_{2} = \log_{2}\left[\frac{\frac{1}{2}\log_{2}\left(\left[1-\alpha^{*}\left(1-\epsilon\frac{\mu_{n}(\mathcal{L})}{\mu_{2}(\mathcal{L})}\sqrt{\frac{\Gamma}{\gamma}}\right)\right]\right)}{\log_{2}(r)}\right],
\end{equation*}
where $r=\frac{1}{\eta_{1}}||\bm{g}_{k^{\prime}}||_{\mathcal{L}}$, with $k^{\prime}$ being the first iteration of the quadratic decrease phase. 
\item The radius of the {\textbf{terminal phase}} is characterized by:
\begin{equation*}
\rho_{\text{terminal}} \leq \frac{2\left[1-\epsilon\frac{\mu_{n}(\mathcal{L})}{\mu_{2}(\mathcal{L})}\sqrt{\frac{\Gamma}{\gamma}}\right]}{e^{-\epsilon^{2}\gamma\bm{\delta}}}\mu_{n}(\mathcal{L})\sqrt{\mu_{2}(\mathcal{L})}.
\end{equation*}
\end{enumerate}
\end{lemma}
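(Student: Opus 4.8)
The plan is to treat each of the three phases in isolation, in each case starting from the per-iteration guarantee already established in the preceding theorem and then substituting the explicit forms of $\alpha^{*}=\frac{e^{-\epsilon^{2}}}{(1+\epsilon)^{2}}\left(\frac{\gamma}{\Gamma}\frac{\mu_{2}(\mathcal{L})}{\mu_{n}(\mathcal{L})}\right)^{2}$, $\bm{\zeta}=\frac{B(\alpha^{*}\Gamma(1+\epsilon))^{2}}{2\mu_{2}^{2}(\mathcal{L})}$, and $B=\frac{\mu_{n}(\mathcal{L})\bm{\delta}}{\gamma\sqrt{\mu_{2}(\mathcal{L})}}$ (from Lemma~\ref{lemma:Crap}) to collect the stated powers of $\mu_{n}(\mathcal{L})$, $\mu_{2}(\mathcal{L})$, $\gamma$, $\Gamma$, $\epsilon$ and $\bm{\delta}$.

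For the \textbf{strict decrease phase} I would use that every iterate in this phase lowers the dual by at least the constant $\Delta=\frac{1}{2}\frac{e^{-2\epsilon^{2}}}{(1+\epsilon)^{2}}\frac{\gamma^{3}}{\Gamma^{2}}\frac{\mu_{2}^{2}(\mathcal{L})}{\mu_{n}^{4}(\mathcal{L})}\eta_{1}^{2}$ proved in the theorem. Since $q$ cannot drop below the optimum $q^{\star}$, the total admissible decrease over the phase is at most $q(\bm{\lambda}_{0})-q^{\star}$, hence $N_{1}\le (q(\bm{\lambda}_{0})-q^{\star})/\Delta$. The remaining work is to replace $\eta_{1}=\frac{1-\bm{\xi}}{\bm{\zeta}}$ by a lower bound via $1-\bm{\xi}=\frac{1-\bm{\xi}^{2}}{1+\bm{\xi}}\ge\frac{1-\bm{\xi}^{2}}{2}$ (valid since $\bm{\xi}\le 1$) together with $1-\bm{\xi}^{2}=\alpha^{*}\left(1-\epsilon\frac{\mu_{n}(\mathcal{L})}{\mu_{2}(\mathcal{L})}\sqrt{\frac{\Gamma}{\gamma}}\right)$; after inserting $\alpha^{*}$ and $\bm{\zeta}$ the exponentials and most spectral factors cancel, leaving exactly $N_{1}\le C_{1}\frac{\mu_{n}^{2}(\mathcal{L})}{\mu_{2}^{3}(\mathcal{L})}\left[1-\epsilon\frac{\mu_{n}(\mathcal{L})}{\mu_{2}(\mathcal{L})}\sqrt{\frac{\Gamma}{\gamma}}\right]^{-2}$, with the factor $\bm{\delta}^{2}$ in $C_{1}$ entering through $\bm{\zeta}^{2}\propto B^{2}\propto\bm{\delta}^{2}$.

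For the \textbf{quadratic decrease phase} I would set $r_{k}=\frac{1}{\eta_{1}}||\bm{g}_{k}||_{\mathcal{L}}$, so the theorem's bound $||\bm{g}_{k+1}||_{\mathcal{L}}\le\frac{1}{\eta_{1}}||\bm{g}_{k}||_{\mathcal{L}}^{2}$ reads $r_{k+1}\le r_{k}^{2}$, whence $r_{k'+m}\le r^{2^{m}}$ with $r=r_{k'}$ the value at the first quadratic iterate $k'$. The phase ends once $||\bm{g}_{k}||_{\mathcal{L}}\le\eta_{0}=\bm{\xi}\eta_{1}$, i.e. $r_{k}\le\bm{\xi}$; solving $r^{2^{N_{2}}}\le\bm{\xi}$ by taking $\log_{2}$ twice (both $\log_{2}r$ and $\log_{2}\bm{\xi}$ are negative, so their ratio is positive) gives $N_{2}=\log_{2}\!\left[\frac{\log_{2}\bm{\xi}}{\log_{2}r}\right]$, and rewriting $\log_{2}\bm{\xi}=\tfrac12\log_{2}\bm{\xi}^{2}=\tfrac12\log_{2}\!\left(1-\alpha^{*}(1-\epsilon\frac{\mu_{n}(\mathcal{L})}{\mu_{2}(\mathcal{L})}\sqrt{\frac{\Gamma}{\gamma}})\right)$ yields the claimed formula. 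The \textbf{terminal radius} is simply the threshold $\eta_{0}=\frac{\bm{\xi}(1-\bm{\xi})}{\bm{\zeta}}$; bounding $\bm{\xi}\le 1$ and $1-\bm{\xi}\le 1-\bm{\xi}^{2}$ gives $\eta_{0}\le\frac{1-\bm{\xi}^{2}}{\bm{\zeta}}=\frac{\alpha^{*}}{\bm{\zeta}}\left(1-\epsilon\frac{\mu_{n}(\mathcal{L})}{\mu_{2}(\mathcal{L})}\sqrt{\frac{\Gamma}{\gamma}}\right)$, and computing $\frac{\alpha^{*}}{\bm{\zeta}}=\frac{2\mu_{n}(\mathcal{L})\sqrt{\mu_{2}(\mathcal{L})}}{e^{-\epsilon^{2}}\gamma\bm{\delta}}$ reproduces the stated $\rho_{\text{terminal}}$.

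The routine but error-prone core of the argument—and the step I expect to dominate the work—is the simultaneous bookkeeping of the spectral exponents: after inserting $\alpha^{*}$ and $B$ into $\bm{\zeta}$ one must verify that combinations such as $\alpha^{*}\Gamma^{2}(1+\epsilon)^{2}=e^{-\epsilon^{2}}\gamma^{2}\frac{\mu_{2}^{2}(\mathcal{L})}{\mu_{n}^{2}(\mathcal{L})}$ collapse correctly, while keeping the inequality directions consistent (a lower bound on $1-\bm{\xi}$ for $N_{1}$, but an upper bound on $1-\bm{\xi}$ for $\rho_{\text{terminal}}$). No new analytic idea is required beyond the dual-gap counting argument for $N_{1}$ and the double-logarithm unfolding of the quadratic recurrence for $N_{2}$.
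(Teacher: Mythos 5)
Your proposal is correct and recovers the paper's stated bounds exactly: the dual-gap counting argument (total decrease bounded by $q(\bm{\lambda}_0)-q^{\star}$, per-iteration decrease bounded below via $1-\bm{\xi}\ge\frac{1-\bm{\xi}^2}{2}$) reproduces $C_1$ precisely including the $\bm{\delta}^2$ factor through $B^2$, the double-logarithm unfolding of $r_{k+1}\le r_k^2$ down to the threshold $r_k\le\bm{\xi}$ gives the stated $N_2$, and your computation $\frac{\alpha^{*}}{\bm{\zeta}}=\frac{2\mu_n(\mathcal{L})\sqrt{\mu_2(\mathcal{L})}}{e^{-\epsilon^2}\gamma\bm{\delta}}$ yields the terminal radius (your implicit reading of the paper's denominator $e^{-\epsilon^{2}\gamma\bm{\delta}}$ as a typo for $e^{-\epsilon^{2}}\gamma\bm{\delta}$ is the sensible one). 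The paper relegates its own proof to an external appendix not included in the source, but since your substitutions of $\alpha^{*}$, $\bm{\zeta}$, and $B$ reproduce every stated constant and exponent, your argument is essentially the intended proof rather than a genuinely different route.
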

\begin{proof}
See Appendix. 
\end{proof}
Given the above result, the total message complexity can then be derived as $\mathcal{O}\left(\left(N_{1}+N_{2}\right)n\beta\left(\kappa(\bm{H}_{k})\frac{1}{R}+R\text{d}_{\max}\right)\log\left(\frac{1}{\epsilon}\right)\right)$. 
\section{EXPERIMENTS AND RESULTS}\label{Sec:Exp}

\begin{figure*}[t!]
\centering
\vspace{-.5em}
\subfigure[$||\bm{A}\bm{x}-\bm{b}||$ on a random network with 30 nodes and 70 edges]{
	\label{fig:PerfSM}
\includegraphics[width=0.23\textwidth,height=1.3in,clip,trim=0.0in 0.00in 0.65in 0.2in]{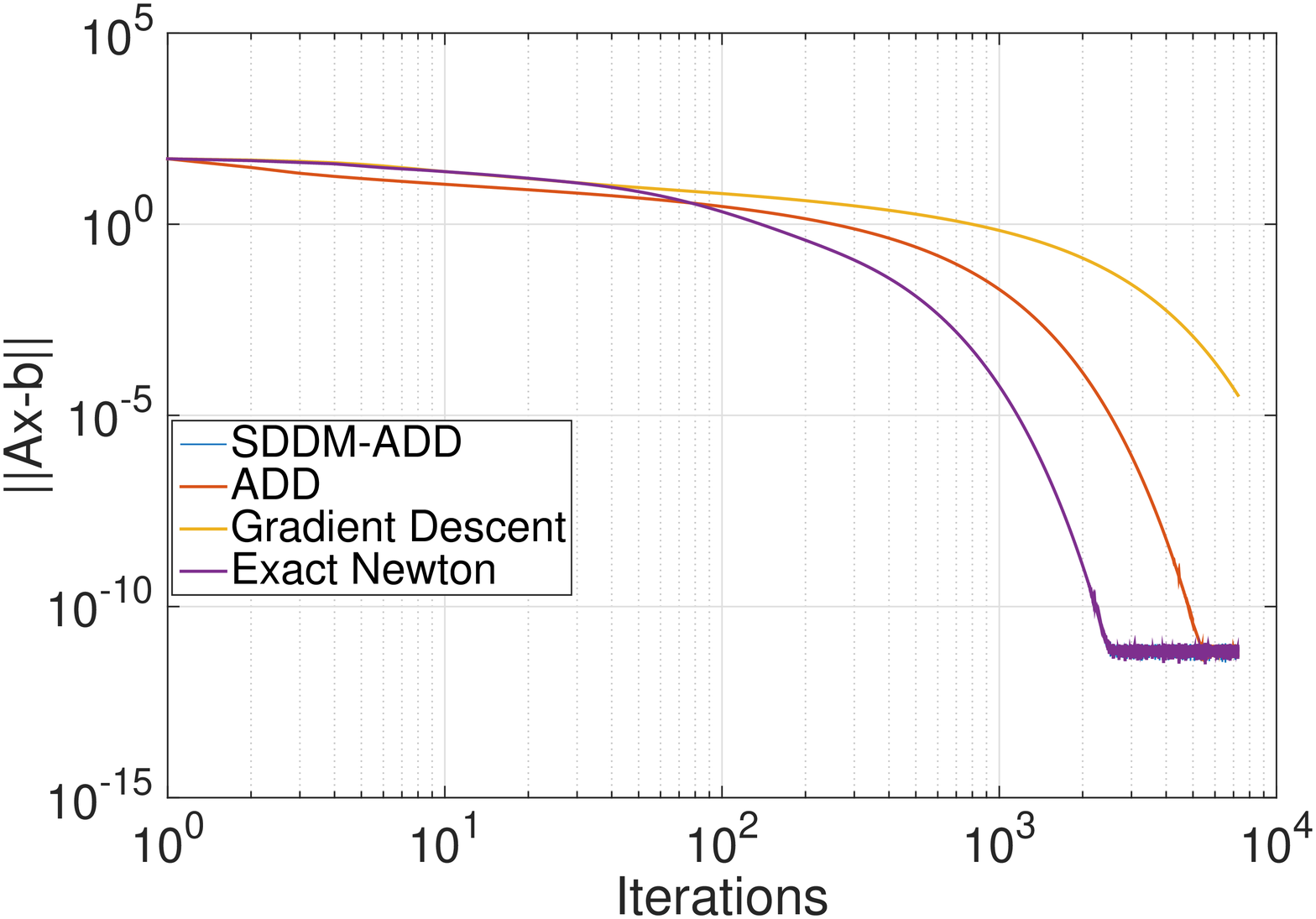}
}
\hfill\hspace{-.3em}\hfill
\subfigure[$f\left(\bm{x}_{k}\right)$ on a random network with 30 nodes and 70 edges]{
	\label{fig:PerfCP}
\includegraphics[width=0.23\textwidth,height=1.34in,clip,trim=0.0in 0.0in 0.0in 0.0in]{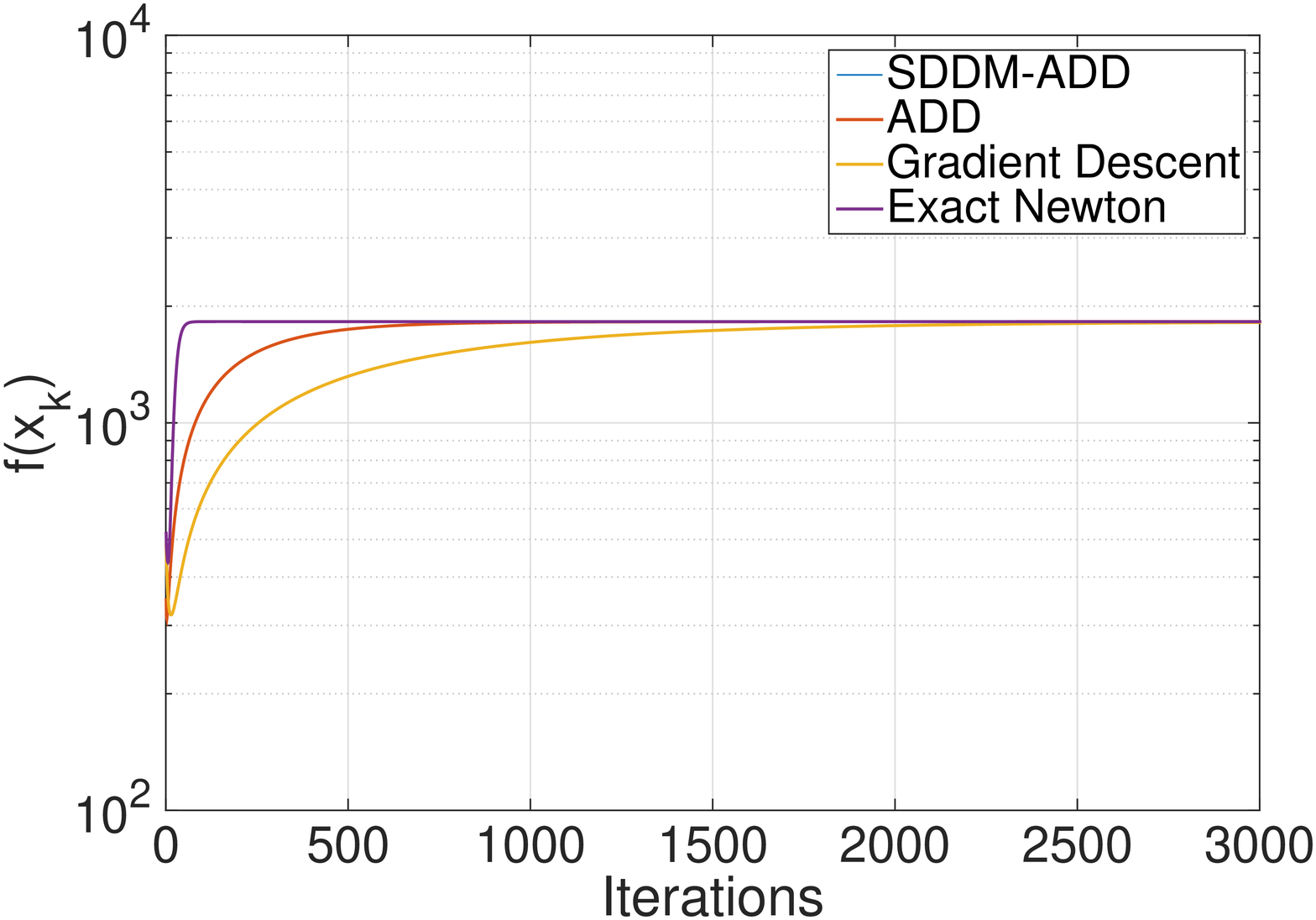}
}
\hfill\hspace{-.5em}
\subfigure[$||\bm{A}\bm{x}_{k}-\bm{b}||$ on a random network with 90 nodes and 200 edges]{
	\label{fig:TrajSM}
\includegraphics[width=0.23\textwidth,height=1.3in,clip,trim=0.15in 0.0in 0.0in 0.12in]{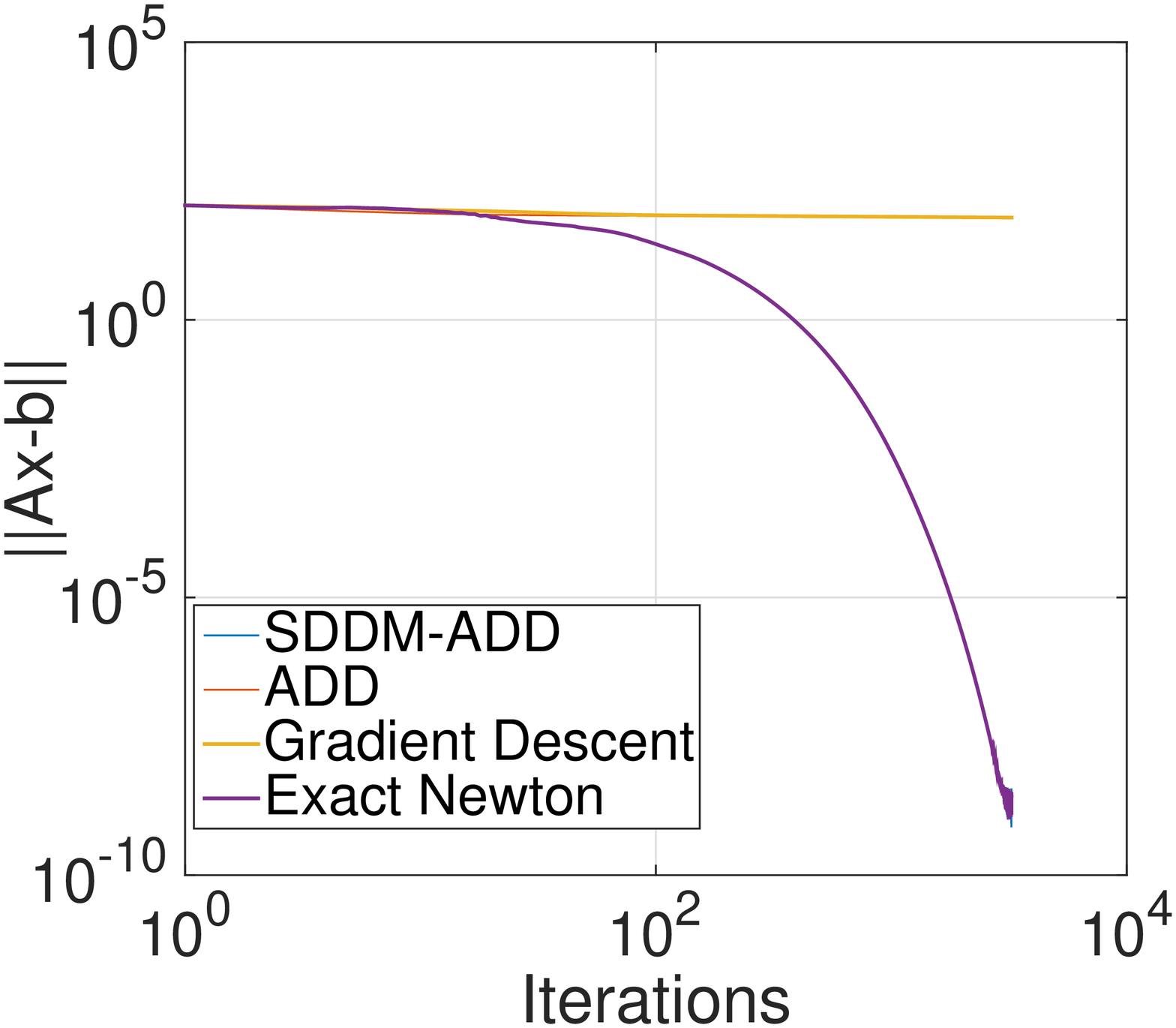}
}
\hfill
\hfill\hspace{-.5em}\hfill
\subfigure[$f(\bm{x}_{k})$ on a random network with 90 nodes and 200 edges]{
	\label{fig:TrajCP}
\includegraphics[width=0.24\textwidth,height=1.3in,clip,trim=0.15in 0.0in 0.0in 0.1in]{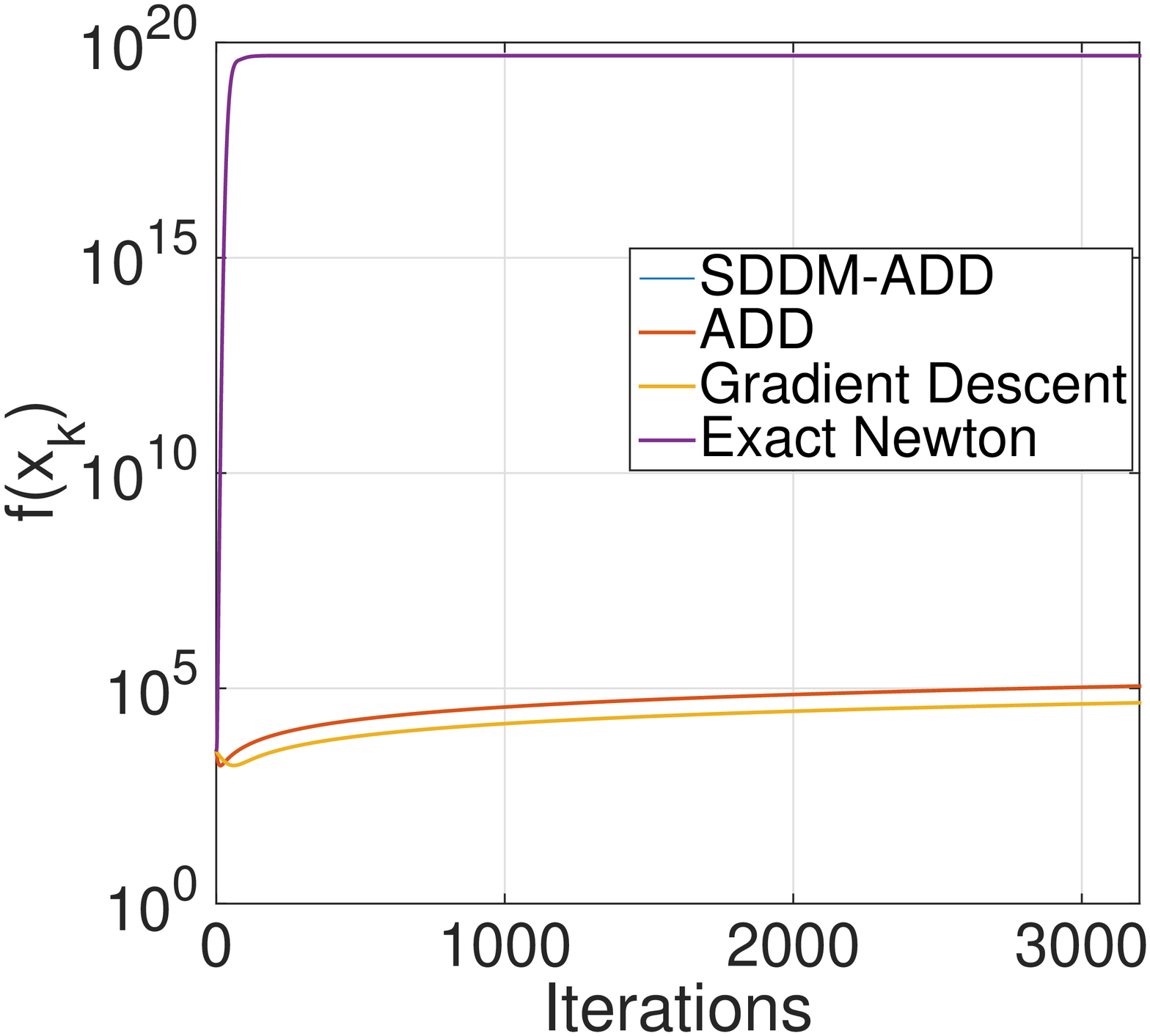}
}
\hfill
\caption{Performance metrics on two randomly generated networks, showing the primal objective, $f\left(\bm{x}_{k}\right)$, and feasibility $||\bm{A}\bm{x}_{k}-\bm{b}||$ as a function of the number of iterations $k$. On a relatively small network (i.e., $30$ nodes and $70$ edges) we outperform ADD and gradient descent by approximately an order of magnitude. On larger networks (i.e., 90 nodes and 200 edges), SDDM-ADD is superior to both ADD and gradient descent, where the primal objective of the latter two algorithms converges to $10^{5}$ after 3000 iterations. It is also worth noting that we perform closely to the exact Newton method computed according to a centralized approach.}
\label{Fig:ResBenchmark}
\vspace{-.6em}
\end{figure*}
We evaluated our approach on two randomly generated networks. The first consisted of 30 nodes and 70 edges, while the second contained 90 nodes with 200 edges. The edges were chosen uniformly at random. The flow vectors, $\bm{b}$, were chosen to place source and sink nodes $\text{diam}(\mathcal{G})$ away from each other. An $\epsilon$ of $\frac{1}{10,000}$, a gradient threshold of $10^{-10}$, and an R-Hop of 1 were provided to our SDDM solver for determining the approximate Newton direction. We compared the performance of our algorithm,  referred to SDDM-ADD hereafter, to ADD, standard gradient descent, and the exact Newton method (i.e., centralized Newton iterations). The values of the primal objective and feasibility were chosen as performance metric. 

Figure~\ref{Fig:ResBenchmark} shows these convergence metrics comparing SDDM-ADD, to ADD~\cite{c6}, standard gradient descent, and the exact Newton method (i.e., centralized Newton iteration). On relatively small networks, 30 nodes and 70 edges, our approach converges approximately an order of magnitude faster compared to both ADD and gradient descent as demonstrated in Figures~\ref{fig:PerfSM} and~\ref{fig:PerfCP}. It is also clear that on such networks, SDDM-ADD is capable of closely tracing the exact Newton method where convergence to the optimal primal objective is achieved after $\approx 200$ iterations compared to $\approx 500$ for ADD and $\approx 2000$ for gradient descent.

In the second set of experiments that goal was to evaluate the performance of SDDM-ADD on large networks where both ADD and gradient descent underperform. Results reported in Figures~\ref{fig:TrajSM} and~\ref{fig:TrajCP} on the larger 90 nodes and 200 edges network clearly demonstrate the effectiveness of our approach. Benefiting from the approximation accuracy of the Newton direction, SDDM-ADD is capable of significantly outperforming state-of-the-art methods. As shown in Figure~\ref{fig:TrajCP} convergence to the optimal solution (as computed by exact Newton iterations) is achieved after 3000 iterations, while ADD and gradient descent underperform by converging to a primal value of $10^{5}$.

\section{CONCLUSIONS}\label{Sec:Con}
In this paper we proposed a fast and accurate distributed Newton method for network flow optimization problems. Our approach utilizes the sparsity pattern of the dual Hessian to approximate the Newton direction using only local information. We achieve $\epsilon$-close approximations by proposing a novel distributed solver for symmetric diagonally dominant systems of linear equations involving M-matrices. Our solver provides a distributed implementation of the algorithm of Spielam and Peng by considering an approximate inverse chain that can be computed in a distributed fashion. 

The proposed approximate Newton method utilizes the distributed solver to obtain $\epsilon$-close approximations to the exact Newton direction up-to any arbitrary $\epsilon > 0$. We further analyzed the properties of the resulting approximate algorithm showing that, similar to conventional Newton methods, superlinear convergence within a neighborhood of the optimal value can be attained. Finally, we demonstrated the effectiveness of our method in a set of experiments on randomly generated networks. Results showed that on both small and large networks, our algorithm, outperforms state-of-the-art techniques in a variety of convergence metrics.  

Possible extensions include applications to network utility maximization~\cite{c7}, general wireless communication optimization problems~\cite{c14}, and stochastic settings~\cite{c15}.

\section*{Appendix}
The complete proofs can be found at: https://db.tt/MbBW15Zx

\end{document}